\newcommand{\rest}{\upharpoonright}
\newcommand{\ran}{\text{ran}}
\newcommand{\eran}{\text{ran}^*\text{ }}
\newcommand{\id}{\text{id }}
\newcommand{\D}{\mathcal{D}}
\newcommand{\tri}{\triangleleft}
\newcommand{\cf}{\text{cf }}
\newcommand{\cl}{\textit{cl}}
\newcommand{\LS}{\mathrm{LS}}
\newcommand{\Ult}{\mathop{\mathrm{Ult}}}
\newcommand{\gS}{\text{gS}}
\newcommand{\ceil}[1]{\left \lceil #1\right \rceil}
\newcommand{\floor}[1]{\left \lfloor #1\right \rfloor}
\newcommand{\card}[1]{\left \vert #1\right \vert}
\newcommand{\seq}[1]{\langle #1 \rangle}
\newtheorem{theorem}{Theorem}[section]
\newtheorem{claim}[theorem]{Claim}
\newtheorem{definition}[theorem]{Definition}
\newtheorem{corollary}[theorem]{Corollary}
\newtheorem{lemma}[theorem]{Lemma}
\newtheorem{remark}[theorem]{Remark}
\newtheorem{fact}[theorem]{Fact}
\newtheorem{question}[theorem]{Question}
\newtheorem*{theorem*}{Theorem}
\title{Large cardinal axioms from tameness in AECs}
\author{Will Boney}
\email{wboney@math.harvard.edu}
\address{Mathematics Department\\Harvard University\\Cambridge, MA, USA}
\author{Spencer Unger}
\email{sunger@math.ucla.edu}
\address{Department of Mathematics\\University of California-Los Angeles\\Los Angeles, CA, USA}
\date{\today}
\begin{document}

\begin{abstract} We show that various tameness assertions about abstract
elementary classes imply the existence of large cardinals under mild cardinal
arithmetic assumptions.  For instance, we show that if $\kappa$ is an
uncountable cardinal such that $\mu^\omega < \kappa$ for every $\mu< \kappa$ and
every AEC with L{\"o}wenheim-Skolem number less than $\kappa$ is $<\kappa$-tame,
then $\kappa$ is almost strongly compact.  This is done by isolating a class of
AECs that exhibits tameness exactly when sufficiently complete
ultrafilters exist.  \end{abstract}

\maketitle

\section{Introduction}

The birth of modern model theory is often said to be Morley's proof
\cite{morleycat} of what was then called the \L o\'{s} Conjecture.  This is now
called Morley's Categoricity Theorem.  It is only natural that this same
question be an important test question when studying nonelementary model theory.
In one of the most popular contexts for this study, Abstract Elementary Classes,
this question is known as Shelah's Categoricity Conjecture.

While still open, there are many partial results towards this conjecture that
add various model-theoretic and set-theoretic assumptions.  The most relevant
for this discussion is the first author's \cite[Theorem 7.5]{tamelc}, which
shows that if there are class many strongly compact cardinals, then any
Abstract Elementary Class (AEC) that is categorical in \emph{some} high enough
successor cardinal is categorical in every high enough cardinal.  One of the
central concepts in the proof is the notion of tameness, which says roughly that
if two types differ, then they differ over some small subset of their domain.
Types here do not have the syntactic form familiar from first order-logic, since
AECs lack syntax.  Instead, a semantic version of type (called Galois or orbital
type) is introduced as, roughly, the orbit of elements under automorphisms of a
sufficiently homogeneous (or monster) model fixing the domain.  In practice
tameness has two cardinal parameters, the size of the domain of the types and
the cardinal measuring how small the subset of the domain must be.

A key instance of the advances in the first author's work is the following,
which with a little more work allows the application of previous results of
Shelah \cite{sh394} and Grossberg and VanDieren \cite{tamenessthree} to obtain a
version of Shelah's categoricity conjecture.

\begin{fact}[\cite{tamelc}.4.5] \label{boneythm}
If $\mathbb{K}$ is an AEC with $LS(\mathbb{K}) < \kappa$ and $\kappa$ is strongly compact, then $\mathbb{K}$ is $<\kappa$-tame.
\end{fact}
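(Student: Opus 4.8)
The plan is to argue by contraposition inside a monster model $\mathfrak{C}$ for $\mathbb{K}$, turning a failure of $<\kappa$-tameness into a configuration that a sufficiently complete ultrafilter can collapse. Suppose $\mathbb{K}$ is not $<\kappa$-tame, so there are a set $A$ and elements $a,b$ with $\text{gtp}(a/A) \neq \text{gtp}(b/A)$ while $\text{gtp}(a/A_0) = \text{gtp}(b/A_0)$ for every $A_0 \in P_\kappa(A)$. Amalgamation (available in the monster) lets me realize each small agreement by an automorphism $f_{A_0} \in \text{Aut}(\mathfrak{C})$ with $f_{A_0} \upharpoonright A_0 = \text{id}$ and $f_{A_0}(a) = b$. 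Because $\kappa$ is strongly compact I may fix a $\kappa$-complete fine ultrafilter $U$ on $P_\kappa(A)$; fineness (that $\{A_0 : x \in A_0\} \in U$ for each $x \in A$) is exactly what will let the individual $f_{A_0}$ cohere into a single map fixing all of $A$.

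The technical engine, and the step I expect to be the main obstacle, is a \L o\'{s}-style theorem for AECs: if $U$ is $\kappa$-complete and $\kappa > LS(\mathbb{K})$, then for any family $\{M_i\} \subseteq \mathbb{K}$ the ultraproduct $\prod_U M_i$ lies in $\mathbb{K}$ and the canonical maps are $\prec_{\mathbb{K}}$-embeddings. I would derive this from Shelah's Presentation Theorem, which writes $\mathbb{K}$ as the class of $L$-reducts of models of a first-order theory $T^*$ in an expanded language $L^*$ with $|L^*| \le LS(\mathbb{K})$ that omit a family $\Gamma$ of $L^*$-types. The first-order part $T^*$ is preserved by every ultraproduct via the classical \L o\'{s} theorem. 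The delicate point is preservation of type omission: each $\gamma \in \Gamma$ has at most $|L^*| \le LS(\mathbb{K}) < \kappa$ formulas, so if some $[g]_U$ realized $\gamma$, then $\kappa$-completeness would place the intersection $\{i : M_i \text{ realizes } \gamma \text{ at } g(i)\}$ in $U$, contradicting that each $M_i$ omits $\Gamma$. The same presentation-theorem bookkeeping shows the diagonal embedding $d : \mathfrak{C} \to \prod_U \mathfrak{C}$ is a $\prec_{\mathbb{K}}$-embedding, and that the ultraproduct $F := \prod_U f_{A_0}$ of automorphisms is an automorphism of $\prod_U \mathfrak{C}$.

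With this in hand the rest is assembly. Fineness gives, for each $x \in A$, that $f_{A_0}(x) = x$ on a $U$-large set, hence $F(d(x)) = d(x)$; thus $F$ fixes $d[A]$ pointwise. Since $f_{A_0}(a) = b$ for every $A_0$, we also get $F(d(a)) = d(b)$, so $F$ witnesses $\text{gtp}(d(a)/d[A]; \prod_U \mathfrak{C}) = \text{gtp}(d(b)/d[A]; \prod_U \mathfrak{C})$. Finally I invoke the invariance of Galois types under $\prec_{\mathbb{K}}$-embeddings: since $d$ is such an embedding, type equality over $d[A]$ reflects back to type equality over $A$, giving $\text{gtp}(a/A) = \text{gtp}(b/A)$ and contradicting the choice of $a,b$. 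The crux is genuinely the first claim of the second paragraph---verifying that $\kappa$-complete ultraproducts respect both AEC-membership and $\prec_{\mathbb{K}}$; once ultraproducts are available as a tool, fineness does the remaining combinatorial work essentially for free.
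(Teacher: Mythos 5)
Your proposal is correct and takes essentially the same route as the cited source: the paper states this result as a Fact imported from \cite{tamelc} (Theorem 4.5 there) without reproving it, and the proof there is exactly your scheme --- \L o\'{s}' Theorem for AECs obtained from Shelah's Presentation Theorem plus preservation of type omission under $\kappa$-complete ultraproducts, then a fine $\kappa$-complete ultrafilter on $\mathcal{P}_\kappa(A)$ averaging the witnessing maps, with fineness fixing the base and invariance of Galois types along $\prec_{\mathbb{K}}$-embeddings reflecting the equality back down. The paper itself outlines this same strategy in its closing remarks of Section 4 on the converses, so your reconstruction matches both the cited proof and the authors' own description of it.
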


Sections 5 and 6 of \cite{tamelc} give similar theorems for measurable and
weakly compact cardinals.  The main theorems of this paper give converses to
these results under mild cardinal arithmetic assumptions.  We state the
following theorem as a sample application of our methods.  We prove below that
by strengthening the tameness hypothesis we can drop the ``almost" from the
conclusion of the theorem.

\begin{theorem*}
Let $\kappa$ be uncountable such that $\mu^\omega < \kappa$ for every $\mu < \kappa$.
\begin{enumerate}
	\item If $\kappa^{<\kappa} = \kappa$ and every AEC with L{\"o}wenheim-Skolem number less than $\kappa$ is $(<\kappa, \kappa)$-tame, then $\kappa$ is almost weakly compact.
	\item If every AEC with L{\"o}wenheim-Skolem number less than $\kappa$ is $\kappa$-local, then $\kappa$ is almost measurable.
	\item If every AEC with L{\"o}wenheim-Skolem number less than $\kappa$ is $<\kappa$-tame, then $\kappa$ is almost strongly compact.

\end{enumerate}
\end{theorem*}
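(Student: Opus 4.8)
The plan is to reduce ``$\kappa$ is almost strongly compact'' to its ultrafilter characterization and then manufacture the required ultrafilters directly from tameness. Recall that $\kappa$ is almost strongly compact precisely when, for every $\mu < \kappa$ and every $\lambda \ge \kappa$, there is a fine $\mu$-complete ultrafilter on $P_\kappa(\lambda)$. So it suffices to fix such a pair $(\mu,\lambda)$ and produce one such ultrafilter. The overall strategy mirrors the biconditional promised in the abstract: I would isolate, for each $\lambda$, a single AEC $\mathbb{K}_\lambda$ with $\LS(\mathbb{K}_\lambda) < \kappa$ whose $<\kappa$-tameness is equivalent to the existence of these ultrafilters, and then simply invoke the hypothesis that every AEC with L{\"o}wenheim--Skolem number below $\kappa$ is $<\kappa$-tame.

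The AEC $\mathbb{K}_\lambda$ should be built so that Galois types over a distinguished model $M$ of size $\lambda$ encode (partial) fine ultrafilters on $P_\kappa(\lambda)$, while the restriction of a type to a submodel $N$ with $\card{N} < \kappa$ remembers only the trace of that ultrafilter on the small index set determined by $N$. The design target is a pair of elements $a_0,a_1$ (realized in extensions of $M$) together with, for every $N \leq M$ with $\card{N} < \kappa$, a witness that $\mathrm{gtp}(a_0/N) = \mathrm{gtp}(a_1/N)$; these local witnesses should exist unconditionally, because any $N$ of size $<\kappa$ imposes only a bounded, mutually consistent set of constraints on the coding, which can be satisfied without reference to any global ultrafilter. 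Here the cardinal arithmetic hypothesis $\mu^\omega < \kappa$ does two jobs: it keeps $\LS(\mathbb{K}_\lambda)$ below $\kappa$---the AEC axioms force closure under unions of $\omega$-chains, so the generated substructures are closed under countable sequences and hence of size at most $\mu^\omega < \kappa$---and it guarantees that whatever ultrafilter we ultimately extract is countably complete, so that we obtain genuine large-cardinal strength rather than a trivial ultrafilter.

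With this in place the argument closes quickly. Since $\LS(\mathbb{K}_\lambda) < \kappa$, the hypothesis gives that $\mathbb{K}_\lambda$ is $<\kappa$-tame, so the local agreements $\mathrm{gtp}(a_0/N) = \mathrm{gtp}(a_1/N)$ for all small $N \leq M$ force the global agreement $\mathrm{gtp}(a_0/M) = \mathrm{gtp}(a_1/M)$. This produces an automorphism $f$ of the monster model $\mathfrak{C}$ with $f \rest M$ the identity and $f(a_0) = a_1$, and the point of the coding is that such a global $f$ amalgamates the incoherent family of local witnesses into a single coherent choice---precisely a fine $\mu$-complete ultrafilter $U$ on $P_\kappa(\lambda)$. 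This is the exact reverse of Fact \ref{boneythm}, where one instead starts from such a $U$ and builds the merged automorphism as an ultralimit of the local ones. Letting $\mu$ and $\lambda$ range over all admissible values then yields almost strong compactness.

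The main obstacle is the construction of $\mathbb{K}_\lambda$ itself, and specifically the simultaneous arrangement of three properties that pull against one another: (i) the local equalities $\mathrm{gtp}(a_0/N) = \mathrm{gtp}(a_1/N)$ must hold for \emph{every} $N \in P_\kappa(M)$ \emph{without} assuming any global ultrafilter; (ii) a global witness must nonetheless carry enough information to define a $\mu$-complete fine ultrafilter on all of $P_\kappa(\lambda)$; and (iii) the coding must be realized by an honest AEC, so one must verify the axioms---in particular closure under directed unions---while holding $\LS(\mathbb{K}_\lambda) < \kappa$. Reconciling (i) and (ii), that is, making the local problem always solvable while the global problem is exactly ultrafilter existence, is the crux. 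It is also what limits the conclusion to \emph{almost} strong compactness: the construction delivers $\mu$-completeness for each fixed $\mu < \kappa$ separately but offers no way to merge these into a single $\kappa$-complete ultrafilter, which is exactly the gap a strengthened tameness hypothesis would be needed to close.
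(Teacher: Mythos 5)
There is a genuine gap: your outline reproduces the architecture of the argument (local type equalities hold unconditionally, tameness promotes them to a global equality, and a global witness yields the ultrafilter), but the entire mathematical content lies in the step you defer as ``the main obstacle,'' namely the construction of the coding class and the extraction of an ultrafilter from a global witness. You explicitly flag reconciling your (i) and (ii) as ``the crux'' and leave it unresolved, so what you have is a plan, not a proof. For comparison, the paper resolves this crux in two stages. First, Section \ref{phase2} isolates a combinatorial principle $\#(\mathcal{D},\mathcal{F})$ (Definition \ref{hashtagdef}): for a directed family $\mathcal{F}$ of functions on a directed set $\mathcal{D}$, there exist $f^*$ and coherent nonempty finite sets $u_f \subseteq \ran^*(f)$ for $f \geq f^*$; from such a system one generates a filter from the sets $f^{-1}\{i_f\} \cap \floor{d}$, whose completeness comes from repleteness of $\mathcal{F}$ and which measures every set having a characteristic function in $\mathcal{F}$ (Claims \ref{filter}, \ref{complete}, \ref{measure}), giving the ultrafilter characterizations of Corollaries \ref{cor-wc}, \ref{cor-almost-meas}, \ref{cor-sc}. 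Second, Sections \ref{phase1} and \ref{phase3} build concrete structures $H_{1,\mathcal{D}}$, $H_{2,\mathcal{D}}$ (an action of the exponent-two free group together with parity and difference predicates and a relation $R$ tying the different $f \in \mathcal{F}$ together) such that the small pieces $H_{1,d}$, $H_{2,d}$ are isomorphic via an explicit flip $g_d$ --- so the local agreement in your (i) is free --- while any global isomorphism respecting $\pi$ forces the sets $u_f$ read off from $h(f,d,\emptyset)$ to be finite, nonempty, and coherent, i.e.\ forces $\#(\mathcal{D},\mathcal{F})$ (Lemma \ref{tame-like}); a single AEC $\mathbb{K}_\sigma$, which admits intersections, then converts Galois type equality into such an isomorphism (Claim \ref{typeeq-claim}). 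None of this machinery is recoverable from your outline, and it is also what handles parts (1) and (2) of the statement (fields of sets and uniform ultrafilters on $\kappa$), which your proposal does not address at all.

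Two further inaccuracies are worth flagging. Your account of the hypothesis $\mu^\omega < \kappa$ is wrong on both counts: AEC axioms do not force generated substructures to be closed under countable sequences (in the paper $\LS(\mathbb{K}_\sigma) = \sigma$ simply because $\mathbb{K}_\sigma$ is the class of models of an $L_{\sigma^+,\omega}$-sentence), and the completeness of the derived ultrafilter comes from $\sigma^+$-repleteness of $\mathcal{F}$ (Claim \ref{complete}), not from cardinal arithmetic. What $\sigma^\omega = \sigma$ actually buys is countable closure of $\mathcal{F}$ under the domination order, used exactly once: in the pigeonhole argument of Claim \ref{claim8} showing that $|u_f|$ stabilizes above some $f^*$; without it, the global isomorphism does not yield $\#(\mathcal{D},\mathcal{F})$ at all. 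Finally, a minor point: the paper never passes through an automorphism of a monster model; it uses admission of intersections (Fact \ref{typeequality-fact}) to turn equality of $p_\mathcal{D}$ and $q_\mathcal{D}$ into an isomorphism $M_{1,\mathcal{D}} \cong_{M_{0,\mathcal{D}}} M_{2,\mathcal{D}}$, which is the form of witness from which $\#$ is extracted.
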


The first step in this direction is Shelah \cite{shelah932}, where the
measurable version appears as Theorem 1.3.  The example constructed in
this paper is a generalization of Shelah's.  Note that
Shelah's proof is essentially correct, but requires minor correction (see Remark
\ref{shelahfix} for a discussion).

The proof of the main theorems all follow the same plan, which we outline here.
First, Section \ref{phase2} codes large cardinals into a combinatorial statement
$\#(\mathcal{D}, \mathcal{F})$ (see Definition \ref{hashtagdef}).  Then Section
\ref{phase1} defines two structures $H_{1}$ and $H_{2}$ such that corresponding
small substructures of them are isomorphic, but $H_{1}$ and $H_2$ are only
isomorphic if the relevant $\#(\mathcal{D}, \mathcal{F})$ holds.  Finally,
Section \ref{phase3} defines an AEC $\mathbb{K}_\sigma$ that contains $H_1$ and
$H_2$ and codes their isomorphism (and the isomorphism of their substructures)
into equality of Galois types.  This forms the connection between large
cardinals and equality of Galois types.

Our work has an immediate application to category theory.  Makkai and Par\'{e} proved a theorem \cite{makkaipare} about the accessibility of powerful images from the assumption of class many strongly compact cardinals and Lieberman and Rosicky \cite{liebermanrosicky} later applied this to AECs to give an alternate proof of Fact \ref{boneythm} above.  Brooke-Taylor and Rosicky \cite{brooketaylorrosicky} have recently weakened the hypotheses of Makkai and Par\'{e}'s result to almost strongly compact and our result completes the circle and shows that the conclusion of Makkai and Par\'{e}'s result is actually a large cardinal statement in disguise.  See Corollary \ref{ct-cor} and the surrounding discussion.

Turning back to model theory, this shows that any attempt to prove that all AECs
(even with the extra assumption of amalgamation) are eventually tame as a strategy to prove Shelah's Categoricity Conjecture will fail in ZFC.  However, the AECs
constructed in this paper are unstable and don't really fit into the picture of classification
theory so far or the categoricity conjecture.  This leaves open the possibility
that eventual tameness can be proven in ZFC from \emph{model-theoretic}
assumptions, such as stability or categoricity.  Partial work towards this goal
has already been done by Shelah \cite{sh394}, which derives a variant of
tameness from categoricity; see \cite[Theorem 11.15]{baldwinbook} for an
exposition.  A related question of Grossberg asks if amalgamation can be derived
from categoricity.

In Section \ref{strength}, we prove that even without our cardinal arithmetic
assumption we can derive large cardinal strength from tameness assertions.
Roughly speaking we show that if $\kappa$ carries the tameness property
corresponding to weak compactness, then $\kappa$ is weakly compact in $L$.

The reader is advised to have some background in both set theory and model
theory.  The set-theoretic background is in large cardinals for which we
recommend Kanamori's book \cite{kanamori}.  For the model-theoretic background,
see a standard reference on AECs such as Baldwin's book \cite{baldwinbook}.  We would like to thank John Baldwin, Andrew Brooke-Taylor, and the anonymous referee for helpful comments on this paper.

\section{Large cardinals}\label{phase2}

We begin by recalling some relevant large cardinal definitions.  These are
slight tweaks on standard definitions in the spirit of $\aleph_1$-strongly compact cardinals
(see for example \cite{BM}).  The basic framework is to take a large cardinal property that has $\kappa$ being large if there is a $\kappa$-complete object of some type and parameterizing the completeness by some $\delta$.  Then $\kappa$ is ``almost large'' if a $\delta$-complete object exists for all $\delta < \kappa$ rather than at $\kappa$.

\begin{definition} Let $\kappa$ be an uncountable cardinal.
\begin{enumerate}

\item \begin{enumerate} \item $\kappa$ is $\delta$-weakly compact if for every field $\mathcal{A} \subset \mathcal{P}(\kappa)$ 
of size $\kappa$ there is a nonprincipal $\delta$-complete
uniform filter measuring each set in $\mathcal{A}$.

\item $\kappa$ is almost weakly compact if it is $\delta$-weakly compact for all
$\delta <\kappa$.

\item $\kappa$ is weakly compact if it is $\kappa$-weakly compact.
\end{enumerate}

\item \begin{enumerate}\item $\kappa$ is $\delta$-measurable if there is a uniform, $\delta$-complete ultrafilter on $\kappa$.

\item $\kappa$ is almost measurable if it is $\delta$-measurable for all
$\delta<\kappa$.

\item $\kappa$ is measurable if it is $\kappa$-measurable.
\end{enumerate}

\item \begin{enumerate}\item $\kappa$ is $(\delta, \lambda)$-strongly compact for $\delta \leq \kappa \leq
\lambda$ if there is a $\delta$-complete, fine ultrafilter on
$\mathcal{P}_\kappa\lambda$.

\item $\kappa$ is $(\delta, \infty)$-strongly compact if it is $(\delta,\lambda)$-strongly compact for all $\lambda \geq \kappa$.

\item $\kappa$ is $\lambda$-strongly compact if it is $(\kappa, \lambda)$-strongly compact.

\item $\kappa$ is almost strongly compact if it is $(\delta, \infty)$-strongly
compact for all $\delta < \kappa$.

\item $\kappa$ is strongly compact if it is $(\kappa, \infty)$-strongly compact.
\end{enumerate}

\end{enumerate}
\end{definition}

We note that the notions of $\delta$-weakly compact and $\delta$-measurable are
not standard.  From the definitions, it can be seen that being almost measurable implies being a limit of measurables.  For almost weak and almost strong compactness, the relation is not so clear.  For instance, the following seems open.

\begin{question}
Is ``there exists a proper class of almost strongly compact cardinals''
equiconsistent with ``there exists a proper class of strongly compact
cardinals?'' \end{question}

For the section, we fix an upward directed partial ordering $(\D,\tri)$ with
$\tri$ strict.  The intended applications are $(\kappa,\in)$ and
$(\mathcal{P}_\kappa\lambda,\subset)$.

\begin{definition} For $d \in \D$ we define $\ceil{d} = \{ d' \in \D \mid d'
\tri d \}$ and $\floor{d} = \{d' \in \D \mid d \tri d' \}$. \end{definition}

We also fix a collection $\mathcal{F}$ of functions each of which has domain
$\D$.  For $f_1,f_2 \in \mathcal{F}$ we set $f_1 \leq f_2$ if and only if there
is an $e:\ran(f_2) \to \ran(f_1)$ such that $f_1 = e \circ f_2$.  Note the
witnessing $e$ is unique.  Obviously for each $f \in \mathcal{F}$, the set
$\{f^{-1}\{i\} \mid i \in \ran(f) \}$ partitions $\D$.  So $f_1 \leq f_2$ is
equivalent to saying that the partition from $f_2$ refines the partition from
$f_1$.  We require that $\mathcal{F}$ is upward directed under $\leq$.

We are interested in elements that appear cofinally often as values of $f$ so we
define
\[\ran^*(f) = \bigcap_{d \in \D} \ran(f \rest \floor{d}). \]

With this notation in mind we formulate the following principle, which is
implicit in \cite{shelah932}.

\begin{definition} \label{hashtagdef} Suppose $\mathcal{F}$ is a directed family
of functions with domain $\D$.  Let $\#(\D,\mathcal{F})$ be the assertion that
there are $f^* \in \mathcal{F}$ and a collection $\{u_f \subseteq \ran^*(f) \mid
f \in \mathcal{F} \wedge f \geq f^* \}$ of nonempty finite sets such that if $e$
witnesses that $f \geq f^*$, $e \rest u_f: u_f \to u_{f^*}$ is a bijection.
\end{definition}

Note that $e \rest u_f$ is unique, since there is a unique $e$ witnessing $f
\geq f^*$.  

This principle allows us to define a filter on $\D$.  Assume that $\#(\D,
\mathcal{F})$ holds; this assumption is active until Corollary \ref{cor-wc}.
Then we can choose $i_f \in u_f$ such that $e(i_f) = \min u_{f^*}$ where $e$
witnesses $f^* \leq f$.  Then we define $U \subseteq P(\D)$ by $A \in U$ if and
only if there are $d \in \D$ and $f \in \mathcal{F}$ with $f \geq f^*$ such that
$f^{-1}\{i_f\} \cap \floor{d} \subseteq A$.  Note that $U$ depends on the many
parameters we have defined so far: $\D, \mathcal{F}, \{u_f\}$, and $i_f$.  Also,
the choice of $i_{f^*}$ as the minimum of $u_{f^*}$ was arbitrary, any element
would have done.  Indeed, different elements generate different ultrafilters.

\begin{remark}\label{partfilter-rem}
The formulation of $\#(\D, \mathcal{F})$ given above is chosen because it is the easiest to work with in general.  However, there is an alternate formulation in terms of the partitions of $\D$ generated by the functions of $\mathcal{F}$ that can make the definition of the filter more clear.  In that language, $\#(\D,
\mathcal{F})$ holds if and only if there is a special partition $\mathcal{P}^*$ such that any finer partition $\mathcal{P}$ has a distinguished piece $X_\mathcal{P}$ that is chosen in a coherent way: if $\mathcal{Q}$ is finer than $\mathcal{P}$, then $X_\mathcal{Q} \subseteq X_\mathcal{P}$.  We also require that for $d \in \D$ and $\mathcal{P}$ finer than $\mathcal{P}^*$, we have $X_{\mathcal{P}} \cap \floor{d} \neq \emptyset$.

Then we can define the filter as follows, given $A \subset \D$, we form a partition $\mathcal{P}_A$ that is finer than both $\mathcal{P}^*$ and $\{A, \D - A\}$.  Then we set $A \in U$ if and only if the distinguished piece $X_{\mathcal{P}_A}$ is a subset of $A$ rather than $\D - A$.

The choice of $i_f \in u_f$ corresponds to choices of different distinguished pieces, showing that there are $|u_{f^*}|$-many filters with the desired property.
\end{remark}

\begin{claim}\label{filter} $U$ is a proper filter and for all $d \in \D$,
$\floor{d} \in U$.  \end{claim}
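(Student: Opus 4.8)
The plan is to verify the filter axioms one at a time, isolating closure under finite intersections as the only substantial point. The elementary observation I would use throughout is that $i_f \in u_f \subseteq \ran^*(f) = \bigcap_{d \in \D} \ran(f \rest \floor{d})$, so $i_f$ is a value of $f \rest \floor{d}$ for \emph{every} $d$; hence $f^{-1}\{i_f\} \cap \floor{d} \neq \emptyset$ for all $d \in \D$ and all $f \geq f^*$. With this in hand the easy axioms fall out immediately: taking $f = f^*$ and any $d$ gives $(f^*)^{-1}\{i_{f^*}\} \cap \floor{d} \subseteq \D$, so $\D \in U$; upward closure is built into the definition, since a witness $f^{-1}\{i_f\} \cap \floor{d} \subseteq A$ also witnesses $B \in U$ whenever $A \subseteq B$; properness is exactly the basic observation, for if $\emptyset \in U$ then $f^{-1}\{i_f\} \cap \floor{d} = \emptyset$ for some $f \geq f^*$ and $d$, a contradiction; and $\floor{d} \in U$ is witnessed by $f = f^*$ together with $d$ itself, as $(f^*)^{-1}\{i_{f^*}\} \cap \floor{d} \subseteq \floor{d}$.

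The heart of the argument is closure under intersection. Suppose $A, B \in U$ via $(f_A, d_A)$ and $(f_B, d_B)$, so $f_A^{-1}\{i_{f_A}\} \cap \floor{d_A} \subseteq A$ and $f_B^{-1}\{i_{f_B}\} \cap \floor{d_B} \subseteq B$. Since $\mathcal{F}$ is directed under $\leq$ and $\leq$ is transitive (so a common upper bound again lies above $f^*$), I would pick $g \in \mathcal{F}$ with $g \geq f_A, f_B$ and let $e_A, e_B$ be the unique maps witnessing $g \geq f_A$, $g \geq f_B$. By upward directedness of $(\D, \tri)$ pick $d$ with $d_A, d_B \tri d$, so $\floor{d} \subseteq \floor{d_A} \cap \floor{d_B}$. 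For $x \in g^{-1}\{i_g\}$ one has $f_A(x) = e_A(i_g)$ and $f_B(x) = e_B(i_g)$, whence $g^{-1}\{i_g\} \subseteq f_A^{-1}\{e_A(i_g)\} \cap f_B^{-1}\{e_B(i_g)\}$. Granting the coherence statement
\[ e_A(i_g) = i_{f_A} \quad\text{and}\quad e_B(i_g) = i_{f_B}, \]
it follows that $g^{-1}\{i_g\} \cap \floor{d} \subseteq (f_A^{-1}\{i_{f_A}\} \cap \floor{d_A}) \cap (f_B^{-1}\{i_{f_B}\} \cap \floor{d_B}) \subseteq A \cap B$, so $(g,d)$ witnesses $A \cap B \in U$.

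The main obstacle is thus the coherence of the distinguished pieces: for $g \geq f \geq f^*$ with connecting witness $e$ (so that, writing $e^*_g, e^*_f$ for the witnesses of comparison with $f^*$, uniqueness gives $e^*_g = e^*_f \circ e$), I must show $e(i_g) = i_f$, equivalently $g^{-1}\{i_g\} \subseteq f^{-1}\{i_f\}$. This is precisely the coherent choice of distinguished pieces described in Remark \ref{partfilter-rem}. Half of it is automatic: since $e^*_f(i_f) = \min u_{f^*}$ and $e^*_g(i_g) = \min u_{f^*}$, the point $e(i_g)$ satisfies $e^*_f(e(i_g)) = e^*_g(i_g) = \min u_{f^*}$, so $e(i_g)$ and $i_f$ agree under $e^*_f$. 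The delicate step is to deduce equality: as $e^*_f \rest u_f$ is a bijection onto $u_{f^*}$, it suffices to know $e(i_g) \in u_f$, and this is exactly where genuine agreement of the family $\{u_f\}$ under refinement—rather than the bare pairwise bijection condition—is needed. I would secure this by choosing the $u_f$ coherently from the outset (for instance, fixing them compatibly along refinements so that $e(u_g) = u_f$ whenever $g \geq f \geq f^*$), or equivalently by working throughout in the partition formulation of Remark \ref{partfilter-rem}, where the nesting $X_{\mathcal{Q}} \subseteq X_{\mathcal{P}}$ of distinguished pieces is part of the data. Once coherence is in place, $e(i_g) = i_f$ holds by construction and the intersection argument above completes the proof that $U$ is a proper filter containing every $\floor{d}$.
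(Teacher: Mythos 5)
Your proof follows the same route as the paper's: the easy axioms come from the observation that $i_f \in u_f \subseteq \ran^*(f)$ forces $f^{-1}\{i_f\} \cap \floor{d} \neq \emptyset$, and closure under intersection is reduced, via directedness of $(\D,\tri)$ and $(\mathcal{F},\leq)$, to the inclusion $g^{-1}\{i_g\} \cap \floor{d} \subseteq A \cap B$ for a common upper bound $g$. The difference is that the paper's proof simply asserts this inclusion (``It follows that $f^{-1}\{i_f\} \cap \floor{d_3} \subseteq A \cap B$''), whereas you isolate exactly what it needs: $e_A(i_g) = i_{f_A}$ for the witnessing map $e_A$, i.e.\ coherence of the distinguished elements under refinement. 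Your diagnosis that this does not follow from the letter of Definition \ref{hashtagdef} is correct. For a concrete failure, take $\D = (\omega, <)$, $\mathcal{F} = \{f^*, f_1, f\}$ with $f^* \equiv 0$, $f_1(n) = n \bmod 2$, $f(n) = n \bmod 4$, and $u_{f^*} = u_{f_1} = \{0\}$, $u_f = \{1\}$: every clause of $\#(\D,\mathcal{F})$ as literally stated holds, yet $i_{f_1} = 0$ and $i_f = 1$, so $U$ contains the disjoint sets $f_1^{-1}\{0\}$ and $f^{-1}\{1\}$ and is not a filter.

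As for how this squares with the paper: the strengthening you propose (that $e``u_g = u_f$ whenever $f^* \leq f \leq g$ with witness $e$, equivalently the nesting of distinguished pieces in Remark \ref{partfilter-rem}) is exactly what holds for the witnesses of $\#$ the paper actually produces. In Section \ref{phase1}, the proof of Claim \ref{h}(3) shows $u_f \subseteq e``u_g$ for $f \leq g$, and since $|u_f| = |u_g| = |u_{f^*}|$ for $f, g \geq f^*$ (Claim \ref{claim8}) and these sets are finite, it follows that $e``u_g = u_f$; your argument then gives $e(i_g) \in u_f$, hence $e(i_g) = i_f$ by injectivity of the map down to $u_{f^*}$ on $u_f$, and the intersection step goes through. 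So your proposal is, at heart, the paper's argument carried out correctly: it is complete once the coherence hypothesis is added, and it pins down a condition that the paper's own proof of this claim uses silently and that its abstract formulation of $\#$ fails to guarantee. This does not affect the paper's applications, where coherence always holds, but it is a genuine imprecision in the statement of Definition \ref{hashtagdef} that your write-up correctly flags.
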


\begin{proof}  It is not hard to see that $f^{-1}\{i_f\} \cap \floor{d}$ is
nonempty for all $d$ and $f$, so $\emptyset \notin U$ provided that it forms a
filter.  The fact that $\floor{d} \in U$ for all $d$ is immediate from the
definition.

To see that $U$ is a filter, let $A,B \in U$ witnessed by $f_1,d_1$ and
$f_2,d_2$ respectively.  Let $d_3 \in \D$ be above $d_1$ and $d_2$ and $f \geq
f_1,f_2$.  This is possible since both $\mathcal{D}$ and $\mathcal{F}$ are
directed.  It follows that $f^{-1}\{i_f\} \cap \floor{d_3} \subseteq A \cap
B$.\end{proof}

We would like to generate highly complete filters.  To do so we use the
following ad-hoc definition, which is essentially a closure property of the set
of functions $\mathcal{F}$.

\begin{definition} We say that $\mathcal{F}$ is $\tau$\emph{-replete} if for
every $\mu<\tau$ and sequence $\langle B_\epsilon \mid \epsilon < \mu \rangle$
of subsets of $\D$ such that for each $\epsilon$ there is a function
$f_\epsilon$ such that $B_\epsilon=f_\epsilon^{-1}\{i\}$ for some $i$, there is
a function $f \in \mathcal{F}$ and $\{i_\alpha : \alpha < \mu\} \subset \ran f$
such that $f^{-1}\{i_0\} = \bigcap_{\epsilon<\mu} B_\epsilon$ and for all
$\alpha< \mu$ and $d \in \D$, $f(d)= i_{\alpha+1}$ if and only if $d \notin
\bigcap_{\epsilon<\mu}B_\epsilon$ and $\alpha$ is least such that $d \notin
B_\alpha$. \end{definition}

\begin{claim}\label{complete} If $(\D,\tri)$ is $\tau$-directed and
$(\mathcal{F},\leq)$ is $\tau$-replete, then $U$ is $\tau$-complete. \end{claim}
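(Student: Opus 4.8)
The plan is to reduce $\tau$-completeness to a single intersection and then exploit the structured function provided by repleteness. Fix $\mu < \tau$ and sets $\langle A_\epsilon \mid \epsilon < \mu\rangle$ in $U$; for each $\epsilon$ choose a witness, i.e.\ some $f_\epsilon \geq f^*$ in $\mathcal{F}$ and $d_\epsilon \in \D$ with $B_\epsilon \cap \floor{d_\epsilon} \subseteq A_\epsilon$, where $B_\epsilon := f_\epsilon^{-1}\{i_{f_\epsilon}\}$. Using that $\D$ is $\tau$-directed, I would pick $d^*$ above every $d_\epsilon$, so that $\floor{d^*} \subseteq \floor{d_\epsilon}$ for all $\epsilon$ and hence $\big(\bigcap_{\epsilon} B_\epsilon\big) \cap \floor{d^*} \subseteq \bigcap_\epsilon A_\epsilon$. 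Since $\floor{d^*} \in U$ by Claim \ref{filter} and $U$ is a filter, it then suffices to prove the key reduction $\bigcap_{\epsilon<\mu} B_\epsilon \in U$.

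Before attacking that I would record a coherence lemma: whenever $f^* \leq g \leq g'$ in $\mathcal{F}$, the unique $e$ witnessing $g' \geq g$ sends $i_{g'}$ to $i_g$, so $g'^{-1}\{i_{g'}\} \subseteq g^{-1}\{i_g\}$. This follows from $\#(\D,\mathcal{F})$: composing the (unique) witnessing maps and using that $e \rest u_{g'}$, the witness restricted to $u_g$, and the witness for $g' \geq f^*$ are all bijections carrying the distinguished elements to $\min u_{f^*}$, injectivity forces $e(i_{g'}) = i_g$. In particular distinguished pieces shrink as functions grow, and each $g'^{-1}\{i_{g'}\}$ is nonempty since $i_{g'} \in u_{g'} \subseteq \ran^*(g') \subseteq \ran(g')$.

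Now apply $\tau$-repleteness to $\langle B_\epsilon \mid \epsilon<\mu\rangle$ to obtain $f \in \mathcal{F}$ and $\{i_\alpha : \alpha<\mu\} \subseteq \ran f$ with $f^{-1}\{i_0\} = \bigcap_\epsilon B_\epsilon$ and $f^{-1}\{i_{\alpha+1}\} = \big(\bigcap_{\beta<\alpha} B_\beta\big)\setminus B_\alpha$, these pieces exhausting $\D$. Choose, by the standing binary directedness of $\mathcal{F}$, some $h \geq f, f^*$ and let $e\colon \ran h \to \ran f$ witness $h \geq f$. The heart of the argument is to show $e(i_h) = i_0$, for then $h^{-1}\{i_h\} \subseteq f^{-1}\{i_0\} = \bigcap_\epsilon B_\epsilon$, which exhibits $\bigcap_\epsilon B_\epsilon \in U$ and finishes the proof. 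Since the only values of $f$ are $i_0$ and the $i_{\alpha+1}$, if $e(i_h) \neq i_0$ then $e(i_h) = i_{\alpha+1}$ for some $\alpha < \mu$, whence $h^{-1}\{i_h\} \subseteq f^{-1}\{i_{\alpha+1}\} \subseteq \D \setminus B_\alpha$. To contradict this I would choose a further bound $h' \geq h, f_\alpha$; the coherence lemma, applied both to $h' \geq h$ and to $h' \geq f_\alpha$, gives $\emptyset \neq h'^{-1}\{i_{h'}\} \subseteq h^{-1}\{i_h\} \cap f_\alpha^{-1}\{i_{f_\alpha}\} = h^{-1}\{i_h\} \cap B_\alpha = \emptyset$, a contradiction.

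I expect the main obstacle to be precisely this last step: the function $f$ delivered by repleteness is deliberately coarse and does \emph{not} refine the individual $f_\epsilon$, so one cannot simply place a single $h$ above all of them and read off the answer. The device that makes it work is that repleteness organizes the complement of $\bigcap_\epsilon B_\epsilon$ by ``first failure'', so that landing in a non-distinguished piece $i_{\alpha+1}$ records a \emph{specific} index $\alpha$ at which $B_\alpha$ is avoided; one then needs only binary directedness to produce the single auxiliary bound $h' \geq h, f_\alpha$ and derive a contradiction, rather than a common upper bound of all $\mu$ functions at once. Thus $\tau$-directedness is invoked only for $\D$ (to find $d^*$), while for $\mathcal{F}$ the combination of plain directedness with $\tau$-repleteness suffices.
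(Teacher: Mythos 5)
Your proposal follows the paper's proof in all essentials: both arguments feed the generating sets $B_\epsilon$ into $\tau$-repleteness, bound the resulting $f$ together with $f^*$ using only binary directedness of $\mathcal{F}$, and exploit the ``first failure'' indexing to show that the witness $e$ must send the distinguished value to $i_0$, since otherwise the distinguished piece of the bound lands inside $\D \setminus B_\alpha$ for one specific $\alpha$, while $B_\alpha$ also generates an element of $U$ --- impossible for a proper filter. The paper reaches this final contradiction by citing Claim \ref{filter} directly (both $B_\alpha$ and $\D \setminus B_\alpha$ would then lie in $U$), whereas you unwind that citation into an explicit auxiliary bound $h' \geq h, f_\alpha$; your preliminary reduction to $\bigcap_{\epsilon<\mu} B_\epsilon \in U$ is likewise a cosmetic reorganization.

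The one substantive problem is your proof of the coherence lemma. You want $e_1(i_{g'}) = i_g$, where $e_1$ witnesses $g \leq g'$, and you argue by cancelling $e_2$ (the witness of $f^* \leq g$) from $e_2(e_1(i_{g'})) = \min u_{f^*} = e_2(i_g)$. But $e_2$ is only known to be injective on $u_g$, so the cancellation needs $e_1(i_{g'}) \in u_g$, and nothing in Definition \ref{hashtagdef} provides this: the definition ties each $u_f$ to $u_{f^*}$ only, and says nothing about the witness of $g \leq g'$ carrying $u_{g'}$ into $u_g$ (your claim that ``$e \rest u_{g'}$'' is one of the given bijections is precisely what is not given). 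Indeed, the letter of Definition \ref{hashtagdef} can be satisfied with $\D = \omega$, $f^*$ constant, $g$ the parity function, $g'$ a proper refinement of it, $u_g$ the singleton value whose $g$-preimage is the evens, and $u_{g'}$ a value whose $g'$-preimage is a cofinal set of odds; then all the required bijections with $u_{f^*}$ exist, yet the distinguished pieces of $g$ and $g'$ are disjoint. The lemma you want is nevertheless true, but for a different reason: if $e_1(i_{g'}) \neq i_g$, then $g'^{-1}\{i_{g'}\} \cap \floor{d}$ and $g^{-1}\{i_g\} \cap \floor{d}$ are disjoint sets that both generate $U$, contradicting the properness of $U$ established in Claim \ref{filter}. (This coherence is exactly what the partition reformulation in Remark \ref{partfilter-rem} builds in, and what the paper's own proof of Claim \ref{filter} implicitly uses at its ``it follows that'' step.) Replace your injectivity argument with this appeal to Claim \ref{filter} --- or drop the lemma entirely and derive the final contradiction as the paper does, from $B_\alpha \in U$ and $\D \setminus B_\alpha \in U$ --- and your proof is correct and essentially identical to the paper's.
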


By $\tau$-directed we mean that sets of size less than $\tau$ have an
upperbound.

\begin{proof} Let $A_\epsilon$ for $\epsilon<\mu$ be elements of $U$ where $\mu
< \tau$.  By the definition of $U$, for each $\epsilon<\mu$ we have
$f_\epsilon$ and $d_\epsilon$ so that $f_\epsilon^{-1}\{i_{f_\epsilon}\} \cap
\floor{d_\epsilon} \subseteq A_\epsilon$.  Let $B_\epsilon =
f_\epsilon^{-1}\{i_{f_\epsilon}\}$ for $\epsilon < \mu$ and use the
$\tau$-repleteness of $\mathcal{F}$ to find $f$.  Using the directedness of
$\mathcal{F}$ we can find $\hat{f} \geq f,f^*$ (recall $f^*$ is given by $\#(\D, \mathcal{F})$).  Using the $\tau$-directedness
of $\D$, let $d$ be above each $d_\epsilon$ for $\epsilon<\mu$.

Let $e$ witness that $f \leq \hat{f}$, ie $f = e \circ \hat{f}$.  We want to
show that $e(i_{\hat{f}}) = 0$, since then $\hat{f}^{-1}\{i_{\hat{f}}\} \cap
\floor{d} \subseteq \bigcap_{\epsilon<\mu} (B_\epsilon \cap \floor{d_\epsilon})
\subseteq \bigcap_{\epsilon<\mu}A_\epsilon$.

Suppose that $e(i_{\hat{f}})=\epsilon$ is not zero.  Then
$\hat{f}^{-1}\{i_{\hat{f}}\} \subseteq \D - B_\epsilon$ by the definition of
$f$.  This contradicts that $U$ is filter containing all the sets $\floor{d}$
for $d \in \D$. \end{proof}

\begin{claim}\label{measure} If $A \subseteq \D$ and there is an $f$
in $\mathcal{F}$ such that $A = f^{-1}X$ for some $X
\subseteq \ran(f)$, then $U$ measures $A$. \end{claim}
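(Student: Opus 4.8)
The plan is to reduce to the case where the witnessing function $f$ satisfies $f \geq f^*$, and then to decide membership of $A$ in $U$ by asking whether the distinguished value $i_f$ lies in $X$. The whole content of the claim is that sets of the form $f^{-1}X$ are exactly unions of pieces of the partition induced by $f$, and passing to a finer partition only refines this description.

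First I would observe that the hypothesis on $A$ is preserved under passing to a finer function. Suppose $A = f^{-1}X$ and $g \geq f$ with witnessing map $e'$, so that $f = e' \circ g$. Then $A = f^{-1}X = (e' \circ g)^{-1}X = g^{-1}((e')^{-1}X)$, and $(e')^{-1}X \subseteq \ran(g)$. Using the directedness of $\mathcal{F}$, I would choose $g \geq f, f^*$; replacing $f$ by $g$ and $X$ by $(e')^{-1}X$, we may assume from the outset that $f \geq f^*$, so that $i_f \in u_f \subseteq \ran^*(f) \subseteq \ran(f)$ is defined.

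Now I would split into two cases according to whether $i_f \in X$. If $i_f \in X$, then $f^{-1}\{i_f\} \subseteq f^{-1}X = A$, so $f^{-1}\{i_f\} \cap \floor{d} \subseteq A$ for every $d \in \D$; fixing any such $d$ (the order $\D$ is nonempty), the pair $f, d$ witnesses $A \in U$. If instead $i_f \notin X$, then $f^{-1}\{i_f\}$ is disjoint from $f^{-1}X = A$, i.e. $f^{-1}\{i_f\} \subseteq \D - A$, and the identical computation gives $\D - A \in U$. In either case $U$ measures $A$.

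There is no serious obstacle here: once the reduction to $f \geq f^*$ is in place, the defining clause of $U$ applies verbatim to the single partition piece $f^{-1}\{i_f\}$. The only point demanding a moment's care is to confirm that the inverse-image form of $A$ survives the passage to the finer function $g$, which is immediate from $f = e' \circ g$ together with the uniqueness of the witnessing map noted when $\leq$ was defined.
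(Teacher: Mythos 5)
Your proof is correct and takes essentially the same approach as the paper's: both pass via directedness of $\mathcal{F}$ to a common refinement $\hat{f} \geq f, f^*$ and then decide membership of $A$ in $U$ by whether the distinguished value lands in $X$, noting that the corresponding fiber of $\hat{f}$ sits inside $A$ or inside its complement. Your without-loss-of-generality reformulation $A = g^{-1}\left((e')^{-1}X\right)$ is just a repackaging of the paper's test ``$e(i_{\hat{f}}) \in X$'', so there is no substantive difference.
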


\begin{proof} Let $f$ and $X$ witness the hypotheses of the claim.  Since
$\mathcal{F}$ is directed, we can find $\hat{f} \in \mathcal{F}$ such that
$f,f^* \leq \hat{f}$.  Let $e$ be such that $f = e \circ \hat{f}$.  Now it is
not hard to see that if $e(i_{\hat{f}}) \in X$, then
$\hat{f}^{-1}\{i_{\hat{f}}\} \subseteq A$ and if $e(i_{\hat{f}}) \notin X$, then
$\hat{f}^{-1}\{i_{\hat{f}}\} \subseteq \D -A$.  In the first instance
we have $A \in U$ and in the second we have $\D-A \in U$. \end{proof}

If $\mathcal{F}$ satisfies the hypothesis of the previous claim for $A$, then we
say that $\mathcal{F}$ \emph{has a characteristic function for} $A$.

We can now reformulate many large cardinal notions that are witnessed by the existence of measures.  Our first corollary is an equivalent formulation of weak compactness.

\begin{corollary}\label{cor-wc}  Let $\kappa$ be a regular cardinal.  $\kappa$
is weakly compact if and only if for all fields $\mathcal{A}$ of subsets of
$\kappa$ with $\card{\mathcal{A}} = \kappa$, $\#(\kappa,\mathcal{F})$ holds for
some set of functions $\mathcal{F}$ on $\kappa$ which is directed,
$\kappa$-replete and contains characteristic functions for all elements of
$\mathcal{A}$.  \end{corollary}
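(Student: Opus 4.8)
The plan is to prove the two implications separately, throughout specializing the directed order $(\D,\tri)$ to $(\kappa,\in)$. Then $\floor{d} = \{d' < \kappa \mid d < d'\}$ is the tail above $d$, so any filter containing every $\floor{d}$ is automatically uniform and nonprincipal, and ``$i \in \ran^*(f)$'' just says that $f^{-1}\{i\}$ is unbounded in $\kappa$.

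For the ``if'' direction, suppose $\mathcal{A}$ is a field of subsets of $\kappa$ with $\card{\mathcal{A}} = \kappa$ and fix $\mathcal{F}$ as in the hypothesis, so that $\#(\kappa,\mathcal{F})$ holds. I would simply run the machine already assembled. The associated filter $U$ is a proper filter containing every $\floor{d}$ by Claim \ref{filter}, hence nonprincipal and uniform (any $A \in U$ meets every tail, so is unbounded and of size $\kappa$). Since $\kappa$ is regular, $(\kappa,\in)$ is $\kappa$-directed, and $\mathcal{F}$ is $\kappa$-replete, so Claim \ref{complete} makes $U$ $\kappa$-complete. Finally $\mathcal{F}$ has a characteristic function for each $A \in \mathcal{A}$, so Claim \ref{measure} shows that $U$ measures every $A \in \mathcal{A}$. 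Thus $U$ witnesses that $\kappa$ is $\kappa$-weakly compact for the field $\mathcal{A}$, and as $\mathcal{A}$ was arbitrary, $\kappa$ is weakly compact.

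For the harder ``only if'' direction, suppose $\kappa$ is weakly compact and let $\mathcal{A}$ be a field of size $\kappa$. I would build $\mathcal{F}$ as the closure of $\{\chi_A \mid A \in \mathcal{A}\}$ together with one constant function $f^*$ under two operations: common refinement (to secure directedness) and the operation in the definition of $\kappa$-repleteness, applied to families of pieces of functions already in $\mathcal{F}$ (to secure $\kappa$-repleteness). The point of bounding $\mu < \kappa$ is that every function so produced has range of cardinality less than $\kappa$: a common refinement has range of size at most the product of the two given ranges, and the function built from $\mu$ sets has range of size at most $\mu+1 < \kappa$. An easy induction then shows that every piece $f^{-1}\{i\}$ of every $f \in \mathcal{F}$ lies in the $\kappa$-complete field $\mathcal{B}$ generated by $\mathcal{A}$.

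Now apply weak compactness to the original field $\mathcal{A}$ (of size $\kappa$) to obtain a nonprincipal, $\kappa$-complete, uniform filter $U_0$ on $\kappa$ measuring every set in $\mathcal{A}$. The crux — and the step I expect to be the main obstacle — is to resist applying weak compactness to the field generated by $\mathcal{F}$, which may have size $\kappa^+$; instead, because $U_0$ is a $\kappa$-complete filter, a routine induction on the generation of $\mathcal{B}$ shows that $U_0$ already measures every element of $\mathcal{B}$ (a $<\kappa$-intersection with all factors in $U_0$ lies in $U_0$ by completeness, and otherwise its complement contains a $U_0$-set). In particular $U_0$ measures all the pieces of each $f \in \mathcal{F}$. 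Since the pieces of a fixed $f$ partition $\kappa$ into fewer than $\kappa$ measured sets, $\kappa$-completeness of $U_0$ forces exactly one piece $f^{-1}\{i_f\}$ into $U_0$; set $u_f = \{i_f\}$. Uniformity makes this piece unbounded, so $i_f \in \ran^*(f)$, and for any $e$ witnessing $f \geq f^*$ the inclusion $f^{-1}\{i_f\} \subseteq (f^*)^{-1}\{e(i_f)\}$ puts $(f^*)^{-1}\{e(i_f)\}$ in $U_0$, forcing $e(i_f) = i_{f^*}$; hence $e \rest u_f \colon u_f \to u_{f^*}$ is a bijection of singletons. This verifies $\#(\kappa,\mathcal{F})$ for a directed, $\kappa$-replete $\mathcal{F}$ containing characteristic functions for all of $\mathcal{A}$, completing the equivalence. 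The essential insight is that $\kappa$-completeness of $U_0$, rather than a second appeal to weak compactness, both certifies that $U_0$ measures the whole generated field and makes the distinguished pieces $f^{-1}\{i_f\}$ cohere under refinement.
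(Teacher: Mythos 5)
Your proof is correct, and its overall skeleton matches the paper's: your ``if'' direction is exactly the paper's (apply Claims \ref{filter}, \ref{complete}, and \ref{measure} to the derived filter, with nonprincipality and uniformity coming from the tails $\floor{d}$ and regularity of $\kappa$), and in the ``only if'' direction both you and the paper use the filter given by weak compactness to select, for each $f$, the unique piece $f^{-1}\{i_f\}$ in the filter, set $u_f = \{i_f\}$, take $f^*$ to be a constant function, and check coherence via upward closure of the filter. Where you genuinely diverge is the handling of the field-closure issue. The paper says ``we can assume $\mathcal{A}$ is closed under intersections of size less than $\kappa$'' and then takes $\mathcal{F}_{\mathcal{A}}$ to be \emph{all} bounded-range functions whose pieces lie in $\mathcal{A}$; that reduction silently requires the closed field to still have size $\kappa$, i.e.\ $\kappa^{<\kappa} = \kappa$, which does follow from the paper's filter-style definition of weak compactness together with regularity (it implies inaccessibility) but is nowhere argued. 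You instead keep $\mathcal{A}$ as given, take $\mathcal{F}$ to be the closure of the characteristic functions (plus a constant) under common refinement and the repleteness operation, and observe that a proper $\kappa$-complete filter measuring $\mathcal{A}$ automatically measures the entire $\kappa$-complete field $\mathcal{B}$ that $\mathcal{A}$ generates, since the measured sets themselves form a $\kappa$-complete field. This buys a proof with no hidden cardinal-arithmetic appeal and only one application of weak compactness, at the cost of the closure/induction bookkeeping (including the regularity-of-$\kappa$ argument that the closure stabilizes) that the paper's ``take all functions with pieces in $\mathcal{A}$'' choice renders trivial. Both arguments are sound; yours is marginally more self-contained on this point.
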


\begin{proof}  Assume that $\kappa$ is weakly
compact.  Let $\mathcal{A}$ be a field of subsets of $\kappa$ with
$\card{\mathcal{A}}= \kappa$.  We can assume that $\mathcal{A}$ is closed under
intersections of size less than $\kappa$.  Using the weak compactness of
$\kappa$, we fix a $\kappa$-complete $\mathcal{A}$-ultrafilter $U$.

Let $\mathcal{F}_\mathcal{A}$ be the collection of functions $f: \kappa \to
\kappa$ such that $\ran(f) \subseteq \alpha <\kappa$ for some $\alpha$ and for
all $\beta \in \ran(f)$, $f^{-1}\{\beta\} \in \mathcal{A}$.  It is not
difficult to show that $\mathcal{F}_\mathcal{A}$ is $\leq$-directed and it is
$\kappa$-replete since $U$ is $\kappa$-complete.

For each $f \in \mathcal{F}_{\mathcal{A}}$ let $u_f = \{i_f\}$ where $i_f$ is
the unique element of $\ran{f}$ such that $f^{-1}\{i_f\} \in U$.  If we take
$f^*$ to be the constantly zero function, then it is straightforward to see that
$f^*$ and $\{u_f \mid f \in \mathcal{F}_\mathcal{A} \}$ satisfy
$\#(\D,\mathcal{F}_\mathcal{A})$.

For the reverse direction, for each field $\mathcal{A}$ we apply Claims
\ref{filter}, \ref{complete} and \ref{measure} to see that the filter $U$
generated by $\#(\D,\mathcal{F})$ is a nonprincipal $\kappa$-complete
$\mathcal{A}$-ultrafilter. \end{proof}

\begin{remark}\label{rem-almost-wc}
A similar proof characterizes $\sigma^+$-weak
compactness where we just replace $\kappa$-replete with $\sigma^+$-replete and
consider functions with codomain $\sigma$.
\end{remark}

We also have characterizations of $\sigma^+$-measurable,
$(\delta,\lambda)$-strongly compact and $\lambda$-strongly compact.

\begin{corollary}\label{cor-almost-meas} $\kappa$ is $\sigma^+$-measurable if
and only if $\#(\kappa,\mathcal{F})$ holds for some set $\mathcal{F}$ of
functions from $\kappa$ to $\sigma$ such that $\mathcal{F}$ is directed,
$\sigma^+$-replete and has characteristic functions for all subsets of $\kappa$.
\end{corollary}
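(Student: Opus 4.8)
The plan is to follow the template of Corollary~\ref{cor-wc} and its modification in Remark~\ref{rem-almost-wc}, making exactly one structural change that accounts for the difference between weak compactness and measurability: rather than asking that $\mathcal{F}$ carry characteristic functions only for a prescribed field of size $\kappa$, I would ask for characteristic functions for \emph{every} subset of $\kappa$. This is precisely the hypothesis that forces the filter produced by $\#(\kappa,\mathcal{F})$ to decide every subset of $\kappa$, and hence to be a genuine ultrafilter rather than merely an $\mathcal{A}$-ultrafilter. I would prove the two directions separately.

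For the forward direction, suppose $U$ is a uniform, $\sigma^+$-complete ultrafilter on $\kappa$ and let $\mathcal{F}$ be the family of \emph{all} functions $f:\kappa\to\sigma$ (here $\sigma$ is infinite). Since the common refinement of two partitions of $\kappa$ into at most $\sigma$ pieces again has at most $\sigma$ pieces, $\mathcal{F}$ is $\leq$-directed; it is $\sigma^+$-replete because the function prescribed by the repleteness condition takes at most $1+\mu\leq\sigma$ values and so again lies in $\mathcal{F}$; and it visibly carries the characteristic function $\chi_A$ of every $A\subseteq\kappa$. To verify $\#(\kappa,\mathcal{F})$, for each $f\in\mathcal{F}$ I would use $\sigma^+$-completeness of $U$ together with $\card{\ran(f)}\leq\sigma$ to obtain a unique $i_f\in\ran(f)$ with $f^{-1}\{i_f\}\in U$. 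Because $U$ is uniform it contains every tail $\floor{\alpha}$, so $f^{-1}\{i_f\}$ meets every $\floor{\alpha}$ and therefore $i_f\in\ran^*(f)$. Setting $u_f=\{i_f\}$ and taking $f^*$ to be the constant function $0$, every $f$ satisfies $f\geq f^*$, and the unique $e$ witnessing this is the constant map to $0$; hence $e\rest u_f$ is the bijection $\{i_f\}\to\{0\}=u_{f^*}$, which is exactly what $\#(\kappa,\mathcal{F})$ demands.

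For the reverse direction, suppose $\mathcal{F}$ is directed, $\sigma^+$-replete, carries characteristic functions for all subsets of $\kappa$, and satisfies $\#(\kappa,\mathcal{F})$, and let $U$ be the associated filter. By Claim~\ref{filter}, $U$ is a proper filter containing every tail. Before invoking Claim~\ref{complete} I would first record that $\kappa$ is $\sigma^+$-directed, i.e.\ $\cf\kappa>\sigma$: since $\kappa$ is regular and $\kappa>\sigma$ this is immediate, and in fact $\sigma^+$-repleteness already rules out $\cf\kappa\leq\sigma$, as a cofinal sequence of tails of length at most $\sigma$ would have empty intersection and leave no room for the function required by repleteness. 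With $\sigma^+$-directedness in hand, Claim~\ref{complete} gives that $U$ is $\sigma^+$-complete, and Claim~\ref{measure} together with the hypothesis that $\mathcal{F}$ has characteristic functions for all subsets shows that $U$ measures every $A\subseteq\kappa$, so $U$ is an ultrafilter. Finally, $U$ is uniform: each member meets every tail and is therefore unbounded, hence of size $\kappa$ as $\kappa$ is regular. Thus $U$ witnesses that $\kappa$ is $\sigma^+$-measurable.

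The directedness and repleteness of the family of all functions into $\sigma$, and the singleton bookkeeping in the verification of $\#$, are routine. The step I expect to require the most care is the packaging of the reverse direction: confirming that the hypotheses genuinely supply the $\sigma^+$-directedness of $\kappa$ needed for Claim~\ref{complete}, and extracting uniformity of $U$ correctly from its tail-containment, which is exactly where the regularity of $\kappa$ enters as in Corollary~\ref{cor-wc}.
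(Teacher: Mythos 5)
Your overall architecture is exactly what the paper intends: it omits this proof, saying it is ``similar to the proof of Corollary \ref{cor-wc}'', and both of your directions follow that template (take all functions $\kappa\to\sigma$, set $u_f=\{i_f\}$ where $f^{-1}\{i_f\}\in U$, let $f^*$ be a constant function; for the converse, combine Claims \ref{filter}, \ref{complete} and \ref{measure}). Your forward direction is correct, and you even verify the point the paper glosses over, namely that $i_f\in\ran^*(f)$.

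The gap is in the reverse direction, in the sentence claiming that $\sigma^+$-repleteness by itself rules out $\cf\kappa\leq\sigma$. First, repleteness only speaks about sequences of sets each of which is a \emph{single fiber} $f_\epsilon^{-1}\{i\}$ of some member of $\mathcal{F}$; a tail $\floor{\alpha}$ is not known to be such a fiber, since the characteristic-function hypothesis only writes it as a union $f^{-1}X$ of fibers. Second, and more seriously, your argument reads the clause ``$\{i_\alpha:\alpha<\mu\}\subset\ran f$'' strictly, so that an empty intersection ``leaves no room'' for the required $f$. Under that strict reading no nontrivial $\mathcal{F}$ is replete at all: any two disjoint nonempty fibers, say $f^{-1}\{0\}$ and $f^{-1}\{1\}$, already form a sequence with empty intersection. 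Your own forward direction, where you assert that ${}^\kappa\sigma$ is $\sigma^+$-replete because the prescribed function ``takes at most $1+\mu$ values'', silently uses the intended loose reading in which the designated values may have empty preimages; under that reading an empty intersection is no obstruction, and your cofinality argument evaporates. So the two directions of your proof use incompatible readings of the same definition. What is actually being used --- here and in the paper --- is regularity of $\kappa$, stated explicitly in Corollary \ref{cor-wc} and implicit in this corollary: it supplies the $\sigma^+$-directedness of $(\kappa,\in)$ needed for Claim \ref{complete}, and it converts ``every member of $U$ is unbounded'' into uniformity. (If you want to avoid assuming regularity outright, $\cf\kappa>\sigma$ \emph{can} be derived correctly: repleteness applied to a single $B$ shows complements of fibers are fibers; if $\cf\kappa\leq\sigma$, cover $\kappa$ by at most $\sigma$ bounded fibers coming from characteristic functions of initial segments along a cofinal sequence; applying repleteness to the complements of these produces $f\in\mathcal{F}$ all of whose fibers are bounded, so any $\hat f\geq f,f^*$ has $\ran^*(\hat f)=\emptyset$, contradicting $\emptyset\neq u_{\hat f}\subseteq\ran^*(\hat f)$. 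But even then, for singular $\kappa$ unboundedness of members does not give them size $\kappa$, so uniformity --- and with it the statement itself --- still genuinely wants $\kappa$ regular.)
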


\begin{corollary}\label{cor-sc} Let $\kappa \leq \lambda$ be cardinals.
$\kappa$ is $\lambda$-strongly compact if and only if
$\#(\mathcal{P}_\kappa(\lambda),\mathcal{F})$ holds for some set
$\mathcal{F}$ of functions with domain $\mathcal{P}_\kappa(\lambda)$ and range
bounded in $\kappa$, such that $\mathcal{F}$ is directed, $\kappa$-replete and
has characteristic functions for all subsets of $\mathcal{P}_\kappa(\lambda)$.
\end{corollary}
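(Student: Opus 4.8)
The plan is to mirror the proof of Corollary \ref{cor-wc}, replacing the domain $\kappa$ by $(\mathcal{P}_\kappa(\lambda),\subset)$ and dropping the field restriction so that $\mathcal{F}$ carries characteristic functions for \emph{every} subset. Throughout I will identify fineness of an ultrafilter $U$ on $\mathcal{P}_\kappa(\lambda)$ with the condition that $\floor{d}\in U$ for every $d$, which is exactly the property manufactured by Claim \ref{filter}. I treat the two directions separately.

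For the forward direction I assume $\kappa$ is $\lambda$-strongly compact and fix a $\kappa$-complete fine ultrafilter $U$ on $\mathcal{P}_\kappa(\lambda)$. I will take $\mathcal{F}$ to be the collection of all functions $f:\mathcal{P}_\kappa(\lambda)\to\kappa$ whose range is bounded in $\kappa$. This $\mathcal{F}$ is directed, since a common refinement of $f_1,f_2$ is obtained by pairing their values and this stays bounded below $\kappa$ because $\kappa$ is a cardinal; it visibly has a characteristic function for every subset; and it is $\kappa$-replete, because for $\mu<\kappa$ sets $B_\epsilon$ the required ``first-failure'' function, sorting each $d$ according to the least $\alpha$ with $d\notin B_\alpha$, has range of size at most $\card{\mu}+1<\kappa$ and so already lies in $\mathcal{F}$, with no appeal to $U$. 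To witness $\#(\mathcal{P}_\kappa(\lambda),\mathcal{F})$ I will let $f^*$ be constantly $0$ (the $\leq$-least element of $\mathcal{F}$) and, for each $f$, use $\kappa$-completeness together with $\card{\ran(f)}<\kappa$ to select the unique $i_f\in\ran(f)$ with $f^{-1}\{i_f\}\in U$, setting $u_f=\{i_f\}$. The bijection requirement is then immediate, since the witnessing $e$ sends $i_f$ to $e(i_f)=0$, the sole element of $u_{f^*}$. The one nonformal point will be that $i_f\in\ran^*(f)$: here I use that $\floor{d}\in U$ for every $d$, which follows because $\{x:d\subseteq x\}=\bigcap_{\alpha\in d}\{x:\alpha\in x\}\in U$ by fineness and $\kappa$-completeness while the lone point $\{d\}$ avoids the (necessarily nonprincipal) $U$; thus $f^{-1}\{i_f\}\cap\floor{d}\in U$ is nonempty and $i_f$ is attained above every $d$.

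For the reverse direction I assume $\#(\mathcal{P}_\kappa(\lambda),\mathcal{F})$ holds for such an $\mathcal{F}$ and let $U$ be the filter it generates. Since $(\mathcal{P}_\kappa(\lambda),\subset)$ is $\kappa$-directed (taking $\kappa$ regular, as in Corollary \ref{cor-wc}) and $\mathcal{F}$ is $\kappa$-replete, Claim \ref{complete} gives that $U$ is $\kappa$-complete; since $\mathcal{F}$ has a characteristic function for every $A\subseteq\mathcal{P}_\kappa(\lambda)$, Claim \ref{measure} gives that $U$ measures every such $A$, so $U$ is an ultrafilter; and Claim \ref{filter} gives that $U$ is proper with $\floor{d}\in U$ for all $d$. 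I then read off fineness: for each $\alpha<\lambda$ we have $\floor{\{\alpha\}}\subseteq\{x:\alpha\in x\}$ and $\floor{\{\alpha\}}\in U$, whence $\{x:\alpha\in x\}\in U$. Thus $U$ is a $\kappa$-complete fine ultrafilter on $\mathcal{P}_\kappa(\lambda)$, witnessing that $\kappa$ is $(\kappa,\lambda)$-strongly compact, that is, $\lambda$-strongly compact.

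The step I expect to be the real obstacle is the two-way translation between fineness of $U$ and the purely combinatorial condition $\floor{d}\in U$, with the forward implication being the delicate one: there I must additionally check that the chosen value $i_f$ is attained cofinally, i.e.\ lies in $\ran^*(f)$, and this is exactly where fineness and nonprincipality of $U$ enter. A secondary point is the appeal to $\kappa$-directedness of $\mathcal{P}_\kappa(\lambda)$ in the reverse direction, for which I take $\kappa$ regular as in the companion corollaries.
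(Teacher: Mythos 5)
Your proposal is correct and takes essentially the same approach as the paper: the paper omits this proof, saying only that it is "similar to the proof of Corollary \ref{cor-wc}," and your argument is exactly that adaptation (singleton $u_f$'s read off from the fibers of a fine $\kappa$-complete ultrafilter in the forward direction; Claims \ref{filter}, \ref{complete}, \ref{measure} plus the translation between fineness and $\floor{d} \in U$ in the reverse direction), with the genuinely delicate point---that $i_f \in \ran^*(f)$, via fineness and nonprincipality---correctly identified and handled. Your explicit assumption that $\kappa$ is regular (needed for $\kappa$-directedness of $\mathcal{P}_\kappa(\lambda)$) is not in the statement of Corollary \ref{cor-sc}, but it mirrors the hypothesis of Corollary \ref{cor-wc} that the paper's own omitted proof would likewise rely on, and in the forward direction regularity is automatic since a $\kappa$-complete fine ultrafilter on $\mathcal{P}_\kappa(\lambda)$ cannot exist for singular $\kappa$.
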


\begin{remark}\label{rem-almost-sc} The previous corollary can be modified to
give a natural characterization of ``$\kappa$ is $(\delta,\lambda)$-strongly
compact".  \end{remark}

The proofs of these corollaries are all similar to the proof of Corollary
\ref{cor-wc} and will be omitted.

\section{Model constructions}\label{phase1}

In this section we describe a family of constructions of models which take $\D$
and $\mathcal{F}$ from the previous section as parameters.  We will also use a countable closure hypothesis on $\mathcal{F}$, but we delay this specification until Lemma \ref{tame-like}.

We define the languages and structures that are the key objects in this section and the next.  

\begin{definition}\label{language-def}
Fix a $\mathcal{D}$ and $\mathcal{F}$ as in the previous section.  
\begin{enumerate}
	\item Set $X := \cup \{\ran f : f \in \mathcal{F}\}$, $\sigma := \vert X
\vert$ and  $(G, +) := \left( [X]^{<\omega}, \Delta\right)$.
	\item $\mathcal{L}_\sigma^-$ is the language with two sorts $A$ and $I$; functions $\pi:A \to I$ and $F_c:A \to A$; and relations $P, D_c \subset A$ and $E', E, R \subset A^2$ as $c$ ranges over $G$.
	\item $\mathcal{L}_\sigma$ is the language $\mathcal{L}_\sigma^-$ with an additional sort $J$ and a function $Q:A \to J$.
\end{enumerate}
\end{definition}

The sorts here are disjoint.  $\Delta$
is the symmetric difference on finite subsets of $X$.  Note that $(G,+)$ is the
free group of order 2 on $\sigma = \vert X \vert$ many generators, so the
definitions above only depend on $\sigma$ up to renaming.  Given an
$\mathcal{L}_\sigma^-$-structure $H$, we will often expand it trivially to a
$\mathcal{L}_\sigma$-structure $M$ by putting a single point in $Q$.  Also, we
allow structures with empty sorts for $A$ and $J$.

For this section, we focus on $\mathcal{L}_\sigma^-$.  For $\ell=1,2$, we will
build $H_{\ell, \mathcal{D}}$ as the colimit of the
$\subset_{\mathcal{L}_\sigma^-}$-directed system $\seq{H_{\ell, d} \mid d \in
\mathcal{D}}$.

We focus first on $H_{1,\mathcal{D}}$.  For $d \in \mathcal{D}$, $H_{1, d}$ is
the substructure of $H_{1, \mathcal{D}}$ with universe $A_d = \mathcal{F} \times
\ceil{d} \times G$ and $I_d = \mathcal{F} \times \ceil{d}$.  For each of the
functions and relations below, we replace `$\mathcal{D}$' with `$d$' to denote
the restriction to $H_{1, d}$, for example $\pi_d = \pi_\mathcal{D} \rest A_d$.

\begin{definition} $H_{1, \mathcal{D}}$ is the $\mathcal{L}_\sigma^-$-structure with universe $A_\mathcal{D} = \mathcal{F} \times \mathcal{D} \times G$ and $I_\mathcal{D} = \mathcal{F} \times \mathcal{D}$ with the following functions and relations:
	\begin{itemize}
		\item $\pi_\mathcal{D}$ is the natural projection from
$A_\mathcal{D}$ to $I_\mathcal{D}$, and $E'_\mathcal{D}$ is the derived equivalence relation;
		\item $E_\mathcal{D}$ refines $E'_\mathcal{D}$ and is given by $(f, d, u) E_\mathcal{D} (f', d', u')$ iff $(f, d) = (f',d')$ and there are $d_0, \dots, d_{2n-1} \in \floor{d}$ such that $u \Delta \{f(d_0)\} \Delta\dots\Delta\{f(d_{2n-1})\} = u'$;
		\item $P_\mathcal{D}$ is the unary parity predicate and holds at $(f, d, u)$ iff $|u \cap \ran(f \rest \floor{d})|$ is odd;
		\item For $v \in G$, $D_v^\mathcal{D}$ is a unary difference predicate and holds at $(f,d,u)$ iff $u - \ran(f \rest \floor{d}) \subset v$ and $v \cap \ran f \rest \floor{d} = \emptyset$;
	\item $F^\mathcal{D}_c$ describe the transitive action of $G$ on each $E'_\mathcal{D}$-class given by $F^\mathcal{D}_c(f,d,u) = (f,d,u\Delta c)$; and
		\item $(f, d, u) R_\mathcal{D} (f', d', u')$ if and only if $f
\leq f'$ and if $e$ witnesses this, then $u = \{ i \in X \mid \exists^{odd} j \in u' . e(j) = i\}$.
	\end{itemize}
\end{definition}

The use of $E'$ is redundant given $\pi$, but makes the discussion of its
equivalence classes easier.  We also have that $E$ is redundant.

\begin{claim}
For all $d^* \in \D$ and $(f, d, u), (f', d', u') \in H_{1, d^*}$, we have $(f, d, u) E_{d^*} (f', d', u')$ if and only if the following hold:
\begin{itemize}
	\item $(f, d, u) E_{d^*}' (f', d', u')$;
	\item $P_{d^*}(f, d, u)$ if and only if $P_{d^*}(f', d', u')$; and
	\item for all $v \in G$, $D^{d^*}_v(f, d, u)$ if and only if $D^{d^*}_v(f', d', u')$.
\end{itemize}
In particular, within a particular $E'$-class, $E$-equivalence is determined by
the quantifier-free type of the singletons from $A$.
\end{claim}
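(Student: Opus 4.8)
The plan is to reduce $E$-equivalence (within a fixed $E'$-class) to a concrete condition on symmetric differences, and then to recognize that condition as exactly the conjunction of the parity and difference conditions. Since both sides of the biconditional contain the first bullet $(f,d,u)E'_{d^*}(f',d',u')$, and $E$ refines $E'$, I would first assume $(f,d)=(f',d')$ and abbreviate $W:=\ran(f\rest\floor{d})$, noting that the values $f(d_i)$ with $d_i\in\floor{d}$ are exactly the elements of $W$. Unwinding the definition of $E$, the elements $E$-equivalent to $(f,d,u)$ are precisely those $(f,d,u')$ with $u'=u\Delta\{f(d_0)\}\Delta\dots\Delta\{f(d_{2n-1})\}$ for some $d_0,\dots,d_{2n-1}\in\floor{d}$.

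The key combinatorial step is to identify the possible values of $u\Delta u'$. Using associativity and commutativity of $\Delta$ together with $\{a\}\Delta\{a\}=\emptyset$, a symmetric difference of $m$ singletons is a subset of $W$ whose cardinality is congruent to $m\pmod 2$ (by induction on $m$, since $|T\Delta\{a\}|=|T|\pm 1$). Conversely, every even subset $S\subseteq W$ arises this way: enumerate $S=\{s_1,\dots,s_{2n}\}$ and choose $d_i\in\floor{d}$ with $f(d_i)=s_i$. Allowing $n=0$ for reflexivity, I conclude that $(f,d,u)E_{d^*}(f',d',u')$ holds iff $(f,d)=(f',d')$, $u\Delta u'\subseteq W$, and $|u\Delta u'|$ is even.

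It then remains to match these two conditions to the remaining bullets. For the difference predicate, I observe that $D_v^{d^*}(f,d,u)$ holds iff $u\setminus W\subseteq v\subseteq X\setminus W$, so the collection of $v$ witnessing $D_v$ has $\subseteq$-least element $u\setminus W$; hence the $D_v$-profiles of $(f,d,u)$ and $(f,d,u')$ agree iff $u\setminus W=u'\setminus W$ (plugging in $v=u\setminus W$ and $v=u'\setminus W$), which is exactly $u\Delta u'\subseteq W$. Granting this, the parts of $u$ and $u'$ outside $W$ cancel, so $u\Delta u'=(u\cap W)\Delta(u'\cap W)$, whence $|u\Delta u'|$ is even iff $|u\cap W|\equiv|u'\cap W|\pmod 2$, i.e.\ iff $P_{d^*}(f,d,u)\Leftrightarrow P_{d^*}(f,d,u')$. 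Combining the three equivalences yields the claim.

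I expect the main obstacle to be the bookkeeping in the combinatorial step, namely pinning down exactly which subsets occur as $u\Delta u'$ for an even number of singletons from $W$, and, relatedly, confirming that $D_v$ and $P$ are evaluated identically in $H_{1,d^*}$ and in $H_{1,\mathcal{D}}$ (they depend only on $(f,d,u)$ and the globally defined $\floor{d}$, so passing to the substructure is harmless). The final sentence of the claim, that $E$ within an $E'$-class is determined by the quantifier-free type of the singleton, is then immediate, since $P$ and the family $\{D_v\mid v\in G\}$ are all quantifier-free unary predicates.
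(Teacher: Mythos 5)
Your proof is correct and takes essentially the same route as the paper: the paper's own proof records the same intermediate characterization of $E$-equivalence (namely $(f,d)=(f',d')$, $u \Delta u' \subseteq \ran(f \rest \floor{d})$, and $|u \Delta u'|$ even) and then asserts the match with the parity and difference predicates is ``not hard to see.'' Your write-up simply supplies the details the paper leaves implicit --- the symmetric-difference-of-singletons combinatorics and the substitution $v = u \setminus W$ to extract $u \setminus W = u' \setminus W$ from the $D_v$-profile.
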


\begin{proof} Clearly $(f,d,u)E(f',d',u')$ if and
only if
\begin{itemize}
\item $(f,d,u)E'(f',d',u')$ (hence $f=f'$ and $d=d'$);
\item $u \Delta u' \subset \ran(f)$ is even;
and
\item $u - \ran(f \rest \floor{d}) = u' - \ran(f' \rest \floor{d'})$
\end{itemize}
It is not hard to see that this is equivalent to the list from the
claim.\end{proof}

\begin{definition} For $d \in \D$, set $g_d$ to be the permutation on $A_d \cup I_d$ of
order two given by $g_d(f,d',u) = (f,d',u+\{f(d)\})$ on $A_d$ and the identity on $I_d$.   \end{definition}

Note that $g_d$ is defined on $(f, d', u)$ only if $d' \tri d$.  If $\neg(d'  \tri d)$, we could define $g_d(f, d', u)$, but it would not be a member of $A_d$.  This is a simply a bijection on the underlying set of $H_{1, d}$, but we can describe its interaction with the $\mathcal{L}_\sigma^-$ structure as well.

\begin{claim}\label{claim2d} Given $d_1,  d_2 \in \floor{d}$, $g_{d_1} \circ
g_{d_2}$ is an $\mathcal{L}_\sigma^-$-automorphism of $H_{1, d}$.  \end{claim}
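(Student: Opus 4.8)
The plan is to describe $\phi := g_{d_1}\circ g_{d_2}$ explicitly and then check that it respects each symbol of $\mathcal{L}_\sigma^-$. Write a generic element of $A_d$ as $(f,a,u)$ with $a\in\ceil{d}$, so $a\tri d$. Since $\tri$ is transitive and $d_1,d_2\in\floor{d}$, we get $a\tri d\tri d_i$, hence $d_1,d_2\in\floor{a}$; in particular each $g_{d_i}$ is defined on all of $A_d$, fixes the first two coordinates, and is the identity on $I_d$, so $\phi$ is a sort-preserving bijection of $A_d\cup I_d$ which is the identity on $I_d$. Unwinding the definitions, $\phi(f,a,u)=(f,a,u\Delta w_f)$ where $w_f:=\{f(d_1)\}\Delta\{f(d_2)\}$. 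The whole verification rests on two observations, both consequences of $d_1,d_2\in\floor{a}$: the set $w_f$ has even cardinality (either $0$ or $2$), and $w_f\subseteq\ran(f\rest\floor{a})$.

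For the symbols living inside a single fiber the checks are short. Since $\phi$ preserves the pair $(f,a)$, it preserves $\pi$ and $E'$; since $\Delta$ is commutative, it commutes with each $F_c$. For $D_v$, the condition $v\cap\ran(f\rest\floor{a})=\emptyset$ is independent of $u$, and because $w_f\subseteq\ran(f\rest\floor{a})$ we have $(u\Delta w_f)-\ran(f\rest\floor{a})=u-\ran(f\rest\floor{a})$, so the predicate is unchanged. The parity predicate $P$ is where composing two copies of $g$ is essential: a single $g_{d_i}$ flips $P$, because $f(d_i)\in\ran(f\rest\floor{a})$ forces $|(u\Delta\{f(d_i)\})\cap\ran(f\rest\floor{a})|$ to differ in parity from $|u\cap\ran(f\rest\floor{a})|$; applying $g_{d_1}$ and $g_{d_2}$ flips $P$ twice and hence preserves it (equivalently, $|w_f|$ is even). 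Having shown that $\phi$ preserves $E'$, $P$ and every $D_v$, the preceding claim---that within an $E'$-class $E$ is determined by $P$ and the $D_v$---lets us conclude that $\phi$ preserves $E$ as well.

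The main work is the relation $R$, the only symbol relating different fibers $f\neq f'$. Here I would use that for a witness $e$ of $f=e\circ f'$ the pushforward $e_*(w):=\{i\mid\exists^{odd}j\in w.\,e(j)=i\}$ is $\mathbb{F}_2$-linear, i.e.\ a homomorphism for $\Delta$. Since $R$ imposes no condition on the middle coordinates and $\phi$ fixes $f,f'$ and the witness $e$, it suffices to see that $u=e_*(u')$ is equivalent to $u\Delta w_f=e_*(u'\Delta w_{f'})$. This follows once we note $e_*(w_{f'})=w_f$: applying $e$ to the singletons and using $f=e\circ f'$ gives $e(f'(d_i))=f(d_i)$, so $e_*(\{f'(d_1)\}\Delta\{f'(d_2)\})=\{f(d_1)\}\Delta\{f(d_2)\}$. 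Then linearity yields $e_*(u'\Delta w_{f'})=e_*(u')\Delta w_f$, and cancelling $w_f$ gives the desired equivalence. I expect this compatibility $e_*(w_{f'})=w_f$, together with the linearity of $e_*$, to be the only genuinely delicate step; everything else reduces to the two bookkeeping facts about $w_f$ recorded above.
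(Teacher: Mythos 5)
Your proof is correct and takes essentially the same route as the paper's: a symbol-by-symbol verification whose key points are that the two parity flips cancel, that the difference predicates are untouched because $w_f \subseteq \ran(f \rest \floor{a})$, and that $R$ is preserved because the pushforward along the witness $e$ carries $u' \Delta \{f'(d_i)\}$ to $u \Delta \{f(d_i)\}$. The only organizational differences are that you work with the composite map directly where the paper checks each $g_{d_i}$ separately and composes, and that you deduce $E$-preservation from the preceding claim rather than by reusing the witnessing elements of $\D$; both are fine.
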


\begin{proof}
Most of this is clear from the definition of $H_{1,d}$.  For a permutation $f$ of $M$, we say that some $f$ \emph{preserves} a predicate $U$ iff $U$ holds of $x$ iff it holds of $f(x)$ (in $M$) and it \emph{flips} $U$ iff $U$ holds of $x$ iff it fails to hold at $f(x)$.  It is easy to see that $g_{d_\ell}$ preserves each difference predicate and flips each parity predicate.  We show that $g_{d_1}$
already preserves $R_\mathcal{D}$ on any pair for which the function is defined.  Suppose
$(f, d, u) R_\mathcal{D} (f', d', u')$ and let $f = e \circ f'$.  Since $u = \{ i \in X \mid \exists^{odd} j \in u' . e(j) = i\}$, we have that 
\begin{eqnarray*}
u \Delta \{ f(d_1)\} &=& \{ i \in X \mid \exists^{odd} j \in u' . e(j) = i\} \Delta \{ e \circ f'(d_1) \}\\
 &=& \{ i \in X \mid
\exists^{odd} j \in u' \Delta \{f'(d_1) \} .  e(j) = i\}.
\end{eqnarray*}
The second equality holds because $u' \Delta \{f'(d_1)\}$ changes the number of preimages of $f(d_1)$ by 1 (when compared to $u'$). It follows that $(f, \alpha, u \Delta \{f(d_1)\}) R_\mathcal{D} (f', \alpha', u'
\Delta \{ f'(d_1)\})$.

Finally, we note that $E_d$ is preserved because the same elements
of $\D$ witnessing $E_d$-relatedness in $H_{1,d}$ will witness $E_d$ relatedness of
the $g_{d_1} \circ g_{d_2}$-images.  Since each predicate is flipped or preserved, the composition $g_{d_1} \circ g_{d_2}$ preserves each predicate and is an $\mathcal{L}_\sigma^-$-isomorphism.\end{proof}

\begin{claim} \label{claim5d}
If $\langle (f, d, u_1), (f, d, u_2) \rangle$ has the same quantifier-free type
in $H_{1, d^*}$  as $\langle (f, d, v_1), (f, d, v_2) \rangle$ for any $d \tri d^*$, then $u_1 \Delta u_2 = v_1 \Delta v_2$.
\end{claim}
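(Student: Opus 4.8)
The plan is to read the symmetric difference $u_1 \Delta u_2$ directly off the quantifier-free type, using the function symbols $F_c$ that encode the action of $G$. Recall that for each $c \in G$ the language $\mathcal{L}_\sigma^-$ has a function symbol $F_c$, interpreted in $H_{1, \mathcal{D}}$ (and hence in the substructure $H_{1, d^*}$) by $F_c^{\mathcal{D}}(f, d, u) = (f, d, u \Delta c)$. In other words, within each $E'$-class the family $\{F_c \mid c \in G\}$ realizes the regular action of the group $(G, \Delta)$ on itself in the third coordinate, so that the single ``difference'' $c$ between two points of a class is recorded by a function symbol.

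With this in hand, I would first set $c := u_1 \Delta u_2 \in G$. Since $(G, \Delta)$ has exponent two we have $u_1 \Delta c = u_2$, and therefore $F_c(f, d, u_1) = (f, d, u_1 \Delta c) = (f, d, u_2)$. Thus the atomic formula $F_c(x_1) = x_2$ belongs to the quantifier-free type of $\langle (f, d, u_1), (f, d, u_2) \rangle$ in $H_{1, d^*}$. Because $\langle (f, d, v_1), (f, d, v_2) \rangle$ is assumed to realize the \emph{same} quantifier-free type, this formula must hold of the second pair as well, i.e.\ $F_c(f, d, v_1) = (f, d, v_2)$. Unwinding the interpretation of $F_c$ gives $v_1 \Delta c = v_2$, whence $v_1 \Delta v_2 = c = u_1 \Delta u_2$, as desired.

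I do not expect a genuine obstacle: the construction of $H_{1, \mathcal{D}}$ was arranged precisely so that differences within an $E'$-class are witnessed by a single atomic formula built from the $F_c$'s, and the entire content of the claim is the observation that $F_{u_1 \Delta u_2}$ sends $(f, d, u_1)$ to $(f, d, u_2)$. The only points worth verifying are bookkeeping: that $F_c(x_1) = x_2$ is legitimately an atomic formula appearing in the quantifier-free type (it is, since $F_c$ is a function symbol of $\mathcal{L}_\sigma^-$ and $F_c(x_1)$ is a well-formed term), and that $c$ is a bona fide element of $G$, i.e.\ a finite subset of $X$ --- which holds because $u_1, u_2 \in G = [X]^{<\omega}$ and $G$ is closed under $\Delta$. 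Note also that nothing in the argument depends on the particular $d \tri d^*$, since $F_c$ acts identically on every $E'$-class.
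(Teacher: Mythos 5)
Your proof is correct and matches the paper's own argument exactly: both read off $u_1 \Delta u_2$ from the atomic formula $F_{u_1 \Delta u_2}(x_1) = x_2$, which holds of the first pair and therefore of the second, and then unwind the interpretation $F_c(g,d,u) = (g,d,u \Delta c)$ to conclude $v_1 \Delta v_2 = u_1 \Delta u_2$. Your version simply spells out the bookkeeping that the paper's one-line proof leaves implicit.
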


\begin{proof}
Note that ``$F_{u_1 \Delta u_2} (x) = y$" is in the quantifier free type of the first and $F^{d^*}_c(g, d, u) = (g, d, v)$ if and only if $c = u \Delta v$.
\end{proof}

The $H_{2, d}$'s are built as the $g_d$-images of the $H_{1,d}$.

\begin{definition}
For $d \in \D$, set $H_{2, d}$ to be the $\mathcal{L}_\sigma^-$-structure with
universe $A_d \cup I_d$ defined so that $g_d$ is an
$\mathcal{L}_\sigma^-$-isomorphism from $H_{1,d}$.  Set $H_{2, \D}$ be the direct union of the sequence $\langle
H_{2, d} \mid d \in \D \rangle$.
\end{definition}

This definition is justified because $g_d$ is a bijection from $A_d \cup I_d$ to itself and, by Claim \ref{claim2d}, if $d \tri d' \in \D$, then $H_{2, d} \subseteq H_{2, d'}$; here, $\subseteq$ refers to the substructure relation.  Note that $H_{2, \D}$ has the same universe as $H_{1, \D}$.

Tameness type assumptions about our class of models will give an $\mathcal{L}_\sigma^-$-isomorphism $h$
from $H_{1,\D}$ to $H_{2,\D}$ with an additional property.  We call this
additional property ``respecting $\pi$".

\begin{definition}
We say that $h:H_{1, d} \to H_{2,d}$ \emph{respects} $\pi$ iff for all $a \in A_\mathcal{D}$, $\pi_\mathcal{D}(a) = \pi_\mathcal{D}\left(h(a)\right)$.
\end{definition}

In other words, we require $h \rest I_\D$ to be the identity.

From such an isomorphism we will prove
$\#(\D,\mathcal{F})$.  Note that setting $h_0(f, d,u) = (f, d, u \Delta \{f(d)\})$ is a $\mathcal{L}_\sigma^--\{R\}$-isomorphism respecting $\pi$. This $h_0$ does not preserve $R$ because different $d$ give
different $f(d)$.  This could be remedied by picking a ``generic'' or
``average'' value to play the role of $f(d)$, and we could use an ultrafilter to
find such a value.  Since we can derive an ultrafilter from the existence of such an isomorphism, this argues that this average construction is essentially the only way to construct such an isomorphism.

We say that $\mathcal{F}$ is countably closed if for any $\leq$-increasing
sequence $\seq{f_n \mid n < \omega}$ from $\mathcal{F}$, there is $f \in
\mathcal{F}$ such that $f \geq f_n$ for all $n<\omega$.

\begin{lemma}\label{tame-like} Suppose that $\mathcal{F}$ is countably closed.
If there is an $\mathcal{L}_\sigma^-$-isomorphism $h$ from $H_{1,\D}$ to
$H_{2,\D}$ respecting $\pi$, then $\#(\D,\mathcal{F})$ holds. \end{lemma}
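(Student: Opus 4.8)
The plan is to determine exactly what an isomorphism $h$ respecting $\pi$ can look like, and then to read the witnesses for $\#(\D,\mathcal{F})$ off of it. The key preliminary point is that on the common universe $A_\D\cup I_\D$ the structures $H_{1,\D}$ and $H_{2,\D}$ interpret every symbol identically except $P$: the computations behind Claim \ref{claim2d} show that passing through the maps $g_d$ preserves $\pi,E',E,F_c,R$ and each $D_v$, while it flips each parity predicate $P$ (since $f(d)\in\ran(f\rest\floor{d'})$ whenever $d'\tri d$, toggling $\{f(d)\}$ changes the relevant parity). Thus an isomorphism $h\colon H_{1,\D}\to H_{2,\D}$ is precisely a permutation of the common universe that preserves $\pi,E',E,F_c,R$ and all $D_v$, and that \emph{flips} $P$.

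First I would pin down the shape of $h$. Because $h$ respects $\pi$ it fixes sort $I$ and preserves $\pi$, so it carries the fiber over each $(f,d)$ to itself; because it commutes with the simply transitive action of $G$ by the $F_c$, on that fiber it is a translation, giving $h(f,d,u) = (f,d,u\Delta c_{f,d})$ for a finite set $c_{f,d}\in G$. Abbreviating $R_{f,d} := \ran(f\rest\floor{d})$, the fact that $h$ must flip $P$ forces $\card{c_{f,d}\cap R_{f,d}}$ to be odd (so that toggling $u$ by $c_{f,d}$ always reverses the parity tested by $P$), while preserving every $D_v$ forces $c_{f,d}\subseteq R_{f,d}$ (the predicates $D_v$ true at a point pin down $u - R_{f,d}$, so $h$ may change $u$ only inside $R_{f,d}$). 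Hence each $c_{f,d}$ is a nonempty finite subset of $R_{f,d}$ of odd size.

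Next I would exploit $R$, which on the common universe reads $(f,d,u)\,R\,(f',d',u')$ iff $f\le f'$ and $u = e_*(u')$, where $e$ is the unique witness of $f\le f'$ and $e_*\colon G\to G$ is the induced $\mathbb{F}_2$-linear pushforward $e_*(w) = \{\,i : \card{w\cap e^{-1}\{i\}}\text{ is odd}\,\}$; crucially $R$ constrains neither $d$ nor $d'$. Applying preservation of $R$ to an $R$-related pair yields $c_{f,d} = e_*(c_{f',d'})$ for all $d,d'$. Taking $f' = f$, so that $e$ and hence $e_*$ is the identity, gives $c_{f,d} = c_{f,d'}$ for all $d,d'$; write $c_f$ for the common value. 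Then $c_f\subseteq\bigcap_{d\in\D}R_{f,d} = \ran^*(f)$, the set $c_f$ is nonempty, and for $f\le f'$ with witness $e$ we obtain the coherence law $c_f = e_*(c_{f'})$.

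The remaining and main step is to turn these $\mathbb{F}_2$-pushforward identities into the honest bijections demanded by $\#(\D,\mathcal{F})$, and this is exactly where countable closure is needed. Since $\card{e_*(w)}\le\card{w}$, the map $f\mapsto\card{c_f}$ is monotone along $\le$. If it were unbounded one could choose a $\le$-increasing $\seq{f_n\mid n<\omega}$ with $\card{c_{f_n}}\to\infty$, and countable closure would produce $f_\omega\ge f_n$ for all $n$, forcing the finite set $c_{f_\omega}$ to be infinite, a contradiction. So the sizes are bounded; let $N$ be the maximum and fix $f^*$ with $\card{c_{f^*}} = N$. For every $f\ge f^*$ we then have $\card{c_f} = N = \card{c_{f^*}}$, and since $c_{f^*} = e_*(c_f)$ with $\card{e_*(c_f)}\le\card{e(c_f)}\le\card{c_f}$, all the inequalities are equalities, so $e\rest c_f$ is injective with image exactly $c_{f^*}$. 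Setting $u_f := c_f$ for $f\ge f^*$ then provides nonempty finite $u_f\subseteq\ran^*(f)$ with $e\rest u_f\colon u_f\to u_{f^*}$ a bijection, which is precisely $\#(\D,\mathcal{F})$.
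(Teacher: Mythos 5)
Your proof is correct and follows essentially the same route as the paper's: your constants $c_{f,d}$ are the paper's $u_{f,d}$ from Claim \ref{h}, whose parts (1)--(5) are exactly your deductions from $F_c$, $R$, $D_v$, and $P$ (after your streamlining observation that $H_{2,\D}$ interprets every symbol as $H_{1,\D}$ does except for flipping $P$), and your boundedness-of-sizes argument via countable closure is precisely the paper's Claim \ref{claim8}. The only cosmetic differences are that the paper works through the maps $g_d$ pointwise instead of stating the identical-interpretations fact up front, and obtains $f^*$ by a contradiction argument rather than by taking an attained maximum of $\card{c_f}$.
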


For the remainder of the section,  we assume that $\mathcal{F}$ is countably closed and that there is an $h$ as in the lemma
to derive $\#$.

\begin{claim}\label{h} The following are true of $h$:
\begin{enumerate}
\item If we let $u_{f,d}$ be the unique element of $G$ such that $h(f,d,
\emptyset) = (f,d,u_{f,d})$, then $u_{f,d}$ doesn't depend on $d$. Hence we
denote the common value by $u_f$.
\item For all $d \in \D$, $f \in \mathcal{F}$ and $u \in G$, $h(f,d,u) = (f,d,u
\Delta u_f)$.
\item If $f \leq f'$, then $|u_{f}| \leq |u_{f'}|$.
\item For all $f \in \mathcal{F}$, $u_f \neq \emptyset$.
\item For all $f \in \mathcal{F}$, $u_f \subseteq \eran f$.

\end{enumerate}
\end{claim}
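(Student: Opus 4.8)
The plan is to first pin down how the $\mathcal{L}_\sigma^-$-structure of $H_{2,\D}$ compares with that of $H_{1,\D}$, since the two share a universe. The proof of Claim \ref{claim2d} shows that each $g_d$ preserves the difference predicates, preserves $E$ and $R$, and flips the parity predicate $P$; moreover $g_d$ manifestly commutes with $\pi$, preserves $E'$, and commutes with each $F_c$, since it alters only the $G$-coordinate, by $\Delta\{f(d)\}$. Because $g_d : H_{1,d}\to H_{2,d}$ is an isomorphism of order two, this says $P^{H_{2,d}} = \neg P^{H_{1,d}}$ while every other symbol is interpreted in $H_{2,d}$ exactly as in $H_{1,d}$. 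The one delicate point, which I would verify first, is that these identifications cohere across the directed system: whenever $d \tri d'$, the flip of $P$ computed through $g_d$ and through $g_{d'}$ agrees on the common part $A_d$ (for any point there, the index by which $g_d$ shifts lies in the range that defines that point's parity, so the parity always toggles), and the preserved symbols likewise restrict consistently. Granting this, passing to the colimit gives $P^{H_{2,\D}} = \neg P^{H_{1,\D}}$ and $S^{H_{2,\D}} = S^{H_{1,\D}}$ for every other symbol $S$. Hence the isomorphism $h$ preserves $\pi$, the $F_c$, the $D_v$, and $R$ outright, and must send $P^{H_{1,\D}}$ to its complement.

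For (1) and (2) I would combine $F_c$-equivariance with $R$. Writing $h(f,d,\emptyset) = (f,d,u_{f,d})$ (legitimate since $h$ respects $\pi$), preservation of $F_u$ gives $h(f,d,u) = h(F_u(f,d,\emptyset)) = F_u(h(f,d,\emptyset)) = (f,d, u \Delta u_{f,d})$, which is (2) once (1) is established. For (1), observe that for a fixed $f$ the relation $f \leq f$ is witnessed by $e = \id_{\ran f}$, so $(f,d,\emptyset) R (f,d',\emptyset)$ for all $d, d'$; applying $h$ and reading off the definition of $R$ with $e = \id$ forces $u_{f,d} = u_{f,d'}$, so the value is independent of $d$.

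The remaining parts are one-line applications of preservation at the base point $(f,d,\emptyset)$. For (3), if $f \leq f'$ with witness $e$, then $(f,d,\emptyset) R (f',d',\emptyset)$; applying $h$ yields $(f,d,u_f) R (f',d',u_{f'})$, i.e. $u_f = \{ i \mid \exists^{\mathrm{odd}} j \in u_{f'}.\ e(j) = i\}$. The sets $e^{-1}(i) \cap u_{f'}$ for $i \in u_f$ are nonempty and pairwise disjoint, so $\card{u_f} \leq \card{u_{f'}}$. For (4), $P^{H_{1,\D}}(f,d,\emptyset)$ fails, so $P^{H_{2,\D}}(f,d,u_f)$ fails, hence $P^{H_{1,\D}}(f,d,u_f)$ holds; thus $\card{u_f \cap \ran(f\rest\floor{d})}$ is odd and in particular $u_f \neq \emptyset$. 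For (5), $D_\emptyset^{H_{1,\D}}(f,d,\emptyset)$ holds trivially, so by preservation $D_\emptyset^{H_{1,\D}}(f,d,u_f)$ holds, which unwinds to $u_f - \ran(f\rest\floor{d}) \subseteq \emptyset$, i.e. $u_f \subseteq \ran(f\rest\floor{d})$; intersecting over all $d$ gives $u_f \subseteq \eran f$.

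The main obstacle is the bookkeeping in the first paragraph: every later step is simply ``$h$ preserves $S$'' applied at $(f,d,\emptyset)$ for a well-chosen $S$, but this is only licensed after checking that the single parity flip and the preservations of the other symbols assemble coherently into statements about the colimits $H_{1,\D}$ and $H_{2,\D}$, not merely about the finite-stage structures $H_{\ell,d}$.
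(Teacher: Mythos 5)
Your proposal is correct and follows essentially the same route as the paper: apply $h$ to the base points $(f,d,\emptyset)$ and exploit the fact (from the proof of Claim \ref{claim2d}) that each $g_d$ preserves $R$, $E$, $F_c$, and the difference predicates while flipping $P$. The only cosmetic differences are that you package these preservation facts up front into a single comparison of $H_{1,\D}$ and $H_{2,\D}$ (the paper invokes them inline), and you prove (2) by direct $F_u$-equivariance rather than citing Claim \ref{claim5d}, whose proof is that very computation.
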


\begin{proof}
For (1), applying $h$
\[ H_{1, \D} \vDash (f, d, \emptyset) R (f, d', \emptyset) \rightarrow H_{2,
\D} \vDash (f, d, u_{f, d} ) R(f, d', u_{f, d'}).\]

Recall from the proof of Claim \ref{claim2d} that $g_d$ preserves $R$ and note that $\id$ is the witness that $f \leq f$.  Applying the definition of $R$, we have
\[u_{f, d} = \{i \in X \mid \exists^{odd} j \in u_{f, d'}. j = i\} = u_{f, d'}\]

For (2) we apply Claim \ref{claim5d} to $\langle (f, d, \emptyset), (f, d,
u)\rangle$ and $\langle (f, d, u_{f}), (f, d, v)\rangle$ where
$h(f,d,u)=(f,d,v)$.

For (3), we let $e$ be any function such that $f = e \circ f'$.  Then
$(f,d,\emptyset)R_\mathcal{D}(f',d' \emptyset)$ implies $H_{2, \mathcal{D}} \vDash ``(f, d, u_f) R (f', d, u_{f'})''$ implies $(f, d, u_f) R_\mathcal{D} (f', d, u_{f'})$ because $g_d$ preserves $R$.  So
$u_f \subseteq e`` u_{f'}$ and $|u_f| \leq |u_{f'}|$.

For (4), note that $H_{2, d}$ interprets the parity predicate to mean ``$|u|$
is even'' since $g_d$ flips $P$ and that $H_{2, \D} \vDash \neg P(f, d, u_f)$, since $H_{1,\D} \vDash
\neg P(f,d,\emptyset)$.  Thus $|u_f|$ is odd and can't be empty.

For (5), for all $d \in \D$, $H_{2, \D} \vDash D_\emptyset(f, d, u_f)$, since
$H_{1,\D} \vDash D_\emptyset(f,d,\emptyset)$.   Moreover $g_d$ preserves this predicate.  So $u_f \subset ran (f \rest \floor{d})$.\end{proof}

We are ready to produce the $f^*$ for $\#$.  It is here that we use for the
first (and only) time the countable closure of the space of functions
$\mathcal{F}$ under the order $\leq$.

\begin{claim} \label{claim8}
There is $f^* \in \mathcal{F}$ such that $|u_f| = |u_{f^*}|$ for all $f \geq f^*$ from $\mathcal{F}$.  Moreover, if $e$ witnesses $f^* \leq f$, then $e \rest u_f$ is a bijection from $u_f$ to $u_{f^*}$.
\end{claim}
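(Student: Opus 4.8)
The plan is to first locate $f^*$ as a point where the finite quantity $|u_f|$ stops growing, and then to read the bijection off the parity equation defining $R$. The starting observation is that by Claim \ref{h}(3) the assignment $f \mapsto |u_f|$ is monotone ($f \leq f'$ implies $|u_f| \leq |u_{f'}|$), and that each $u_f$ is a nonempty (Claim \ref{h}(4)) finite (since $u_f \in G = [X]^{<\omega}$) set, so these are positive integers. I would first argue that $\{|u_f| : f \in \mathcal{F}\}$ is bounded. Assuming otherwise, the directedness of $\mathcal{F}$ lets me build a $\leq$-increasing sequence $\seq{f_n \mid n<\omega}$ with $|u_{f_n}| \geq n$: having chosen $f_{n-1}$, pick any $g$ with $|u_g| \geq n$ and let $f_n$ be a common upper bound of $f_{n-1}$ and $g$, so monotonicity gives $|u_{f_n}| \geq |u_g| \geq n$. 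Now countable closure of $\mathcal{F}$ produces $f \in \mathcal{F}$ with $f \geq f_n$ for all $n$, whence monotonicity forces $|u_f| \geq n$ for every $n$, contradicting the finiteness of $u_f$. Granting boundedness, I choose $f^*$ realizing the maximal value $N := \max_f |u_f|$; then for any $f \geq f^*$ monotonicity gives $|u_f| \geq |u_{f^*}| = N$ while maximality gives $|u_f| \leq N$, so $|u_f| = |u_{f^*}|$, as desired.

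For the \emph{moreover} clause, fix $f \geq f^*$ and let $e$ witness $f^* \leq f$, i.e.\ $f^* = e \circ f$. Repeating the computation in the proof of Claim \ref{h}(3) with $f^*$ in the role of the smaller function (and using that $g_d$ preserves $R$), the pair $(f^*, d, u_{f^*}) \, R_\D \, (f, d, u_f)$ holds, so by the definition of $R$,
\[ u_{f^*} = \{\, i \in X \mid |e^{-1}\{i\} \cap u_f| \text{ is odd} \,\}. \]
Writing $S := e``u_f \subseteq \ran(f^*)$ and $m_i := |e^{-1}\{i\} \cap u_f|$ for $i \in S$, each $m_i \geq 1$, the sets $e^{-1}\{i\} \cap u_f$ partition $u_f$, and $u_{f^*} = \{i \in S \mid m_i \text{ odd}\}$. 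Then the chain
\[ |u_{f^*}| \leq |S| \leq \sum_{i \in S} m_i = |u_f| \]
together with the just-proven equality $|u_{f^*}| = |u_f|$ forces every step to be an equality. In particular $|S| = \sum_{i \in S} m_i$ with all $m_i \geq 1$ forces $m_i = 1$ for each $i \in S$, so $e \rest u_f$ is injective and $u_{f^*} = S = e``u_f$; thus $e \rest u_f$ is a bijection from $u_f$ onto $u_{f^*}$.

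I expect the boundedness step --- deriving a contradiction from an unbounded increasing sequence of sizes via countable closure --- to be the main (and essentially the only) real obstacle, matching the remark preceding the claim that this is where countable closure is used. Once $f^*$ is chosen with $|u_{f^*}|$ maximal, everything else is a short combinatorial consequence of the parity formula defining $R$ and the equality of cardinalities.
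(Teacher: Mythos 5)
Your proposal is correct and takes essentially the same route as the paper: the existence of $f^*$ comes from using countable closure of $\mathcal{F}$ to rule out a $\leq$-increasing sequence along which $|u_{f_n}|$ strictly grows (the paper phrases this as a direct contradiction with stabilization rather than your boundedness-plus-maximizer packaging, but the use of countable closure and of Claim \ref{h}(3)--(4) is identical), and the \emph{moreover} clause is in both cases extracted from the parity formula defining $R$ applied to $(f^*,d,u_{f^*})$ and $(f,d,u_f)$ together with the equality $|u_f|=|u_{f^*}|$. Your multiplicity-counting argument just spells out in more detail what the paper compresses into the remark that every member of $u_{f^*}$ lies in $e``u_f$.
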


\begin{proof} The moreover part follows from the first part because every member
of $u_{f^*}$ is in the image of $u_f$ under an $e$ witnessing $f^* \leq f$ by
the proof of Claim \ref{h} part (3).

Suppose there is no such $f^*$.  Then there is a $\leq$-increasing sequence
$\langle f_n \in \mathcal{F} \mid n <\omega \rangle$ such that $|u_{f_n}| <
|u_{f_{n+1}}|$ for all $n<\omega$.  By our assumption that $\mathcal{F}$ is
countably closed we can find $f^* \geq f_n$ for all $n<\omega$, but then
$|u_{f^*}|$ is a natural number above infinitely many natural numbers, a
contradiction.  \end{proof}




So we have derived $\#(\D,\mathcal{F})$ as witnessed by $f^*$ and the finite
sets $u_f$.  This finishes the construction of our sequence of models.  We will
need further work to show that these models can be thought of as elements of
some AEC and that tameness assumptions about that AEC give the hypothesis of
Lemma \ref{tame-like}.

\section{Abstract elementary classes} \label{phase3}

The goal of this section is twofold. First, we put the algebraic constructions of Section \ref{phase1} into the context of AECs.  Second, we put the necessary pieces together to conclude large cardinal principles from global tameness and locality axioms.

The AEC is designed to precisely take in the algebraic examples constructed
in Section \ref{phase1} with a single twist.  Recall $\mathcal{L}_\sigma$ from Definition \ref{language-def}.  The extra predicate $Q$  is an index for copies of the algebraic construction, as in Baldwin and Shelah \cite{nonlocality}.  This allows us to turn the ``incompactness'' results about the existence of isomorphisms above into the desired nonlocality results for Galois types.


We define an AEC parameterized by $\sigma$ with very minimal structure.  In
applications we require $\sigma^\omega = \sigma$.  The strong substructure
relation is as weak as possible, leaving open the question of whether
restricting to the case of stronger strong substructure relations carries the
same large cardinal implications.

\begin{definition}\label{ksig-def} We define $\mathbb{K} = \mathbb{K_\sigma}$ to be the collection of $\mathcal{L}_\sigma$-structures (recall Definition \ref{language-def}) given by $M \in \mathbb{K}$ if and only if $M$ is an $\mathcal{L}_\sigma$-structure
satisfying:
\begin{enumerate}
\item $\{F_c \mid c \in G\}$ is an 1-transitive action of $G$ on $\pi^{-1}\{i\}$ for every $i \in I$.
\item $E'$ and $E$ are equivalence relations on $A$ and $aE'b$ if and only if
$\pi(a)=\pi(b)$.
\item For all $i \in I$ and $j \in J$, there is an $a \in A$ such that
$\pi(a)=i$ and $Q(a)=j$.
\end{enumerate}
We let $\prec_{\mathbb{K}}$ be the $\mathcal{L}_\sigma$-substructure relation. \end{definition}

Note that this is an AEC with $LS(\mathbb{K}) = \sigma$; in fact, $\mathbb{K}_\sigma$ is the class of models of an $L_{\sigma^+, \omega}$-sentence.  The main difference
between this definition and Shelah \cite[$\boxtimes_2$ in Proof of Theorem
1.3]{shelah932} is that we have encoded the entire group $G$ into the language
rather than adding a separate sort for it (see Remark
\ref{shelahfix}).  We note that the structures
we call $H_{\ell, d}$ and $M_{\ell, d}$ are called $M_{\ell, \alpha}$ and
$M^+_{\ell, \alpha}$ respectively in \cite{shelah932}.

As mentioned in the last section, any $\mathcal{L}_\sigma^-$-structure can be
trivially expanded to an $\mathcal{L}_\sigma$-structure by putting a single point in $J$
and fixing $Q$ to be the constant function with this value on $A$.  For $\ell= 1,2$, let
$M_{\ell, d}$ be this expansion of $H_{\ell, d}$ and name the single element of
$J^{H_{\ell,d}}$ as $i_\ell$.  We similarly expand $H_{\ell, \D}$ to $M_{\ell,
\D}$.

We also define $M_{0, d}$ to be the $\mathcal{L}_\sigma$-structure with $A$ and $J$ empty and $I = I_d$; note that $M_{0,d} \in \mathbb{K}_\sigma$.  We similarly define $M_{0, \D}$.

Note that $M_{0, d} \subseteq M_{\ell, d}$ for $\ell = 1, 2$.  Thus we can define $p_d:= gtp(i_1/M_{0, d}; M_{1, d})$ and $q_d := gtp(i_2/M_{0,d}; M_{2, d})$; $p_\D$ and $q_\D$ are defined similarly.  The connection between this AEC and the previous work is the following proposition.

\begin{claim} \label{typeeq-claim} \
\begin{enumerate}
	\item For all $d\in \D$, $p_d = q_d$.
	\item There is an isomorphism as in Lemma \ref{tame-like} if and only if $p_\D = q_\D$.
\end{enumerate}
\end{claim}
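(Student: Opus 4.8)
My plan is to prove both parts by the same mechanism: first convert an $\mathcal{L}_\sigma^-$-isomorphism of the reducts into an $\mathcal{L}_\sigma$-isomorphism of the expansions that fixes the base $M_0$ and sends $i_1$ to $i_2$, and then recognize such an isomorphism as exactly the amalgamation datum witnessing equality of the two Galois types. The one structural fact that does the real work is clause (1) of Definition \ref{ksig-def}: since $G$ acts transitively on every fibre $\pi^{-1}\{i\}$, the $k$-image of such a fibre is a nonempty $G$-invariant subset of a single orbit, hence the whole orbit; clause (3) then forces the copies indexed by the $J$-points to overlap inside any common extension.

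For part (1), I would use that $H_{2,d}$ was defined so that $g_d \colon H_{1,d}\to H_{2,d}$ is an $\mathcal{L}_\sigma^-$-isomorphism and that $g_d$ is the identity on $I_d$. Extend $g_d$ to $\widetilde g_d\colon M_{1,d}\to M_{2,d}$ by setting $\widetilde g_d(i_1)=i_2$. As $Q$ is constant with value $i_\ell$ on $A^{M_{\ell,d}}$, one checks $\widetilde g_d(Q(a))=i_2=Q(\widetilde g_d(a))$, so $\widetilde g_d$ is an $\mathcal{L}_\sigma$-isomorphism, and it fixes $M_{0,d}$ since that model lives in the sort $I$ where $g_d$ is the identity. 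Taking $\widetilde g_d$ together with the identity of $M_{2,d}$ as an amalgam matching $i_1$ with $i_2$ over $M_{0,d}$ witnesses $p_d=q_d$. The forward direction of part (2) is the same argument with $h$ in place of $g_d$: an $\mathcal{L}_\sigma^-$-isomorphism $H_{1,\D}\to H_{2,\D}$ respecting $\pi$ is by definition the identity on $I_\D$, so extending it by $i_1\mapsto i_2$ gives an $\mathcal{L}_\sigma$-isomorphism $M_{1,\D}\to M_{2,\D}$ fixing $M_{0,\D}$ and carrying $i_1$ to $i_2$, which witnesses $p_\D=q_\D$.

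For the reverse direction of part (2) I would first unwind $p_\D=q_\D$ to an amalgam: some $N\in\mathbb{K}_\sigma$ with embeddings $k_1\colon M_{1,\D}\to N$ and $k_2\colon M_{2,\D}\to N$ agreeing on $M_{0,\D}$ and with $k_1(i_1)=k_2(i_2)=:j^*$. Fixing $i\in I_\D$, the fibre $\pi^{-1}\{i\}$ of $M_{\ell,\D}$ is a single $G$-orbit and $k_\ell$ is injective and $G$-equivariant, so its image is a nonempty invariant subset of the orbit $\pi^{-1}\{k_\ell(i)\}^N$, hence all of it. Since $k_1 \rest I_\D = k_2 \rest I_\D$, the fibres $\pi^{-1}\{k_1(i)\}^N$ and $\pi^{-1}\{k_2(i)\}^N$ are the same, so $k_1$ and $k_2$ carry the $M_1$- and $M_2$-fibres over $i$ bijectively onto one common fibre of $N$. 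Letting $i$ range over $I_\D$ yields $k_1(A^{M_{1,\D}})=k_2(A^{M_{2,\D}})$, and together with the agreement on $I_\D$ this shows $k_1$ and $k_2$ have equal image as $\mathcal{L}_\sigma^-$-structures. Then $h:=k_2^{-1}\circ k_1$ is an $\mathcal{L}_\sigma^-$-isomorphism $H_{1,\D}\to H_{2,\D}$ which fixes $I_\D$, i.e.\ respects $\pi$, as required by Lemma \ref{tame-like}. Here clause (3) is what keeps the configuration consistent: every fibre over $I_\D$ is already exhausted by elements of $Q$-value $j^*$, so no other $J$-point can be represented over $I_\D$, forcing $k_1(i_1)=k_2(i_2)$ and guaranteeing the two copies genuinely coincide.

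The step I expect to be the main obstacle is the reverse direction when equality of Galois types is read in its official form, as the transitive closure of the atomic amalgamation relation rather than a single amalgam. Then $p_\D=q_\D$ supplies only a finite chain $(i_1,M_{0,\D},M_{1,\D})=(a_0,M_{0,\D},N_0)\sim\cdots\sim(a_n,M_{0,\D},N_n)=(i_2,M_{0,\D},M_{2,\D})$ through auxiliary models $N_j\in\mathbb{K}_\sigma$. I would apply the fibre computation above to each link, showing first that the property ``$\card{J}=1$ and every fibre over $I_\D$ is a single orbit of constant $Q$-value'' propagates from $N_0=M_{1,\D}$ along the chain (an embedding out of such a model forces the property on its target via clauses (1) and (3)), and second that each link produces an $\mathcal{L}_\sigma^-$-isomorphism fixing $I_\D$ between the copies of $N_j$ and $N_{j+1}$ indexed by $a_j$ and $a_{j+1}$. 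Composing these, whose endpoints are precisely $H_{1,\D}$ and $H_{2,\D}$, gives the isomorphism respecting $\pi$. The care needed is in treating the intermediate copies as genuine induced substructures and in verifying that the relation $R$ survives the composition; that bookkeeping is routine but is where the proof must be written out carefully.
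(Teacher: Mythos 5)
Your treatment of part (1) and of the forward direction of (2) coincides with the paper's: extend the $\pi$-respecting $\mathcal{L}_\sigma^-$-isomorphism ($g_d$, respectively $h$) by $i_1 \mapsto i_2$ to get an $\mathcal{L}_\sigma$-isomorphism over $M_{0,d}$, which immediately witnesses equality of the Galois types. For the reverse direction of (2), however, you take a genuinely different route. The paper never manipulates amalgamation diagrams at all: it first proves that $\mathbb{K}_\sigma$ admits intersections, computes $\cl_{M_{\ell,\D}}(M_{0,\D}\, i_\ell) = M_{\ell,\D}$, and then invokes Fact \ref{typeequality-fact} (Baldwin--Shelah \cite{nonlocality}), which says that in a class admitting intersections, Galois type equality \emph{in its official transitive-closure formulation} is equivalent to an isomorphism of closures over the base. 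That one citation absorbs exactly the difficulty you isolate in your final paragraph. Your route instead works by hand inside a common extension: the torsor argument (a nonempty $G$-invariant subset of a set on which $G$ acts simply transitively is the whole set) shows $k_1$ and $k_2$ have the same image, so $k_2^{-1}\circ k_1$ is the desired isomorphism respecting $\pi$; and the transitive-closure issue is then handled by propagating the same fibre computation link by link along a finite chain. Both arguments are sound. Yours is more self-contained and elementary (no appeal to \cite{nonlocality} or to the intersection machinery), but it must do the chain bookkeeping that Fact \ref{typeequality-fact} dispatches once and for all, and that bookkeeping---which you only sketch, and which must be phrased as an isomorphism of the ``copies'' $C_j \subseteq N_j$ indexed by $a_j$ over $I_\D$, since the intermediate models $N_j$ themselves can be much larger than $M_{\ell,\D}$---is precisely where the paper's admits-intersections route earns its keep. (The paper also proves amalgamation in Claim \ref{ap-claim}, which would let you stop at a single amalgam, but it pointedly notes amalgamation is not needed for this claim; your chain argument respects that.)

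One caveat on your fibre computation: you repeatedly treat $\pi^{-1}\{i\}^N$ as a single $G$-orbit. That is what clause (1) of Definition \ref{ksig-def} literally says, but on that literal reading the paper's own amalgam $A^* = I^* \times G \times J^*$ (whose $\pi$-fibres are unions of $|J^*|$ orbits) and its closure operator would fail; the intended reading, needed elsewhere in the paper, is that the $G$-action preserves $Q$ as well as $\pi$ and is simply transitive on each $\pi^{-1}\{i\} \cap Q^{-1}\{j\}$. Under that reading your identification of images must be run on $(\pi,Q)$-fibres, using $k_1(i_1) = k_2(i_2)$ to see that the relevant fibres of $N$ coincide---which is essentially the point you gesture at via clause (3). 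This is a routine repair, not a gap, but it should be made explicit when the argument is written out.
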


To prove the forward direction of (2), we need the notion of \emph{admitting intersection} coming from \cite[Definition 1.2]{nonlocality} in the AEC case.

\begin{definition} \label{admits}
$\mathbb{K}$ admits intersections if and only if for all $X \subseteq M \in \mathbb{K}$, $\cl_M(X) \prec_\mathbb{K} M$, where $\cl_M(X)$ is the substructure of $M$ with universe $\cap \{ N : X \subseteq N \prec_\mathbb{K} M\}$.
\end{definition}

The key consequence of closure under intersection is that it simplifies checking if two types are equal.

\begin{fact}[\cite{nonlocality}.1.3] \label{typeequality-fact}
Suppose $\mathbb{K}$ admits intersections.  Then $gtp(a_1/M_0; M_1) =
gtp(a_2/M_0; M_2)$ if and only if there is $h: \cl_{M_1}(M_0a_1) \cong_{M_0} \cl_{M_2}(M_0a_2)$ with $h(a_1) = a_2$.\end{fact}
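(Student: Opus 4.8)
The plan is to prove Fact~\ref{typeequality-fact} as a general statement about any AEC admitting intersections; nothing specific to $\mathbb{K}_\sigma$ enters. Everything rests on one absoluteness lemma for the closure operator: \emph{if $M' \prec_{\mathbb{K}} N$ and $X \subseteq M'$, then $\cl_{M'}(X) = \cl_N(X)$.} I would prove both inclusions. For $\cl_N(X) \subseteq \cl_{M'}(X)$, any $K$ with $X \subseteq K \prec_{\mathbb{K}} M'$ also satisfies $K \prec_{\mathbb{K}} N$ by transitivity of $\prec_{\mathbb{K}}$, so it is among the models whose intersection defines $\cl_N(X)$. For the reverse inclusion, $\cl_N(X) \prec_{\mathbb{K}} N$ by the admits-intersections hypothesis, and $\cl_N(X) \subseteq M'$ because $M'$ is itself a strong submodel of $N$ containing $X$; the coherence axiom of AECs then upgrades $\cl_N(X) \subseteq M' \prec_{\mathbb{K}} N$ together with $\cl_N(X) \prec_{\mathbb{K}} N$ to $\cl_N(X) \prec_{\mathbb{K}} M'$, making it a competitor for $\cl_{M'}(X)$. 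The corollary I extract is that $\mathbb{K}$-embeddings commute with closures: if $f \colon M \to N$ is a $\mathbb{K}$-embedding, then $f(\cl_M(M_0 a)) = \cl_N(f(M_0)f(a))$, obtained by applying the lemma to $M' = f(M) \prec_{\mathbb{K}} N$ and transporting along the isomorphism $f \colon M \cong f(M)$.

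With this corollary the forward direction is mechanical. Unwinding the definition of Galois type equality, $gtp(a_1/M_0;M_1) = gtp(a_2/M_0;M_2)$ yields a finite chain of atomic equivalences; for each link there are an amalgam $N$, $\mathbb{K}$-embeddings $f_1, f_2$ into $N$ agreeing on $M_0$, and a common point $b = f_1(c_1) = f_2(c_2)$ identifying the two realizations $c_1, c_2$ of that link. The corollary shows $f_1$ and $f_2$ each carry the relevant closure onto the single model $\cl_N(f_1(M_0)\,b)$, so $f_2^{-1} \circ f_1$ is an isomorphism between the two closures; it fixes $M_0$ since $f_1 \rest M_0 = f_2 \rest M_0$, and it sends $c_1$ to $c_2$. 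Composing these link-by-link isomorphisms produces the desired $h \colon \cl_{M_1}(M_0 a_1) \cong_{M_0} \cl_{M_2}(M_0 a_2)$ with $h(a_1) = a_2$. The step I expect to be the main obstacle is precisely keeping the argument valid across the \emph{transitive closure} in the definition of type equality rather than through a single amalgamation, since no amalgamation property is assumed here; this is exactly what the composition of closure-isomorphisms is designed to absorb, and it is what the absoluteness lemma makes possible.

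For the converse I need only that the closures are strong submodels. Writing $N_1 := \cl_{M_1}(M_0 a_1) \prec_{\mathbb{K}} M_1$ and $N_2 := \cl_{M_2}(M_0 a_2) \prec_{\mathbb{K}} M_2$, the given $h \colon N_1 \cong_{M_0} N_2$ with $h(a_1) = a_2$ is itself a witness to atomic equivalence: take the ambient model to be $N_2$ with $f_1 = h$ and $f_2 = \id_{N_2}$, which agree on $M_0$ and both send the realizations to $a_2$. Hence $gtp(a_1/M_0; N_1) = gtp(a_2/M_0; N_2)$. Finally, the inclusions $N_\ell \hookrightarrow M_\ell$ are $\mathbb{K}$-embeddings fixing $M_0$ and $a_\ell$, so they witness $gtp(a_\ell/M_0; N_\ell) = gtp(a_\ell/M_0; M_\ell)$ for $\ell = 1, 2$; chaining these three equalities transfers the type equality back up to $M_1$ and $M_2$, completing the proof.
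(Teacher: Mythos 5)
Your proof is correct, but note that there is no in-paper proof to compare it against: the paper imports this statement as a Fact, citing \cite{nonlocality}~1.3 (Baldwin--Shelah), and never proves it. Your argument is essentially the standard one from that source. The key absoluteness lemma (if $M' \prec_{\mathbb{K}} N$ and $X \subseteq M'$ then $\cl_{M'}(X) = \cl_{N}(X)$, with transitivity of $\prec_{\mathbb{K}}$ giving one inclusion and the coherence axiom the other) is exactly the right engine; its corollary that $\mathbb{K}$-embeddings commute with closures reduces each atomic link of type equality to an isomorphism of closures, and composing link by link is precisely how one must handle the transitive closure in the definition of Galois types, since no amalgamation is assumed here. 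Your converse, using $h$ together with the identity map as witnesses to atomic equivalence and then absorbing the inclusions $\cl_{M_\ell}(M_0 a_\ell) \prec_{\mathbb{K}} M_\ell$, is also the standard route. The only step worth making explicit is that $M_0 \prec_{\mathbb{K}} \cl_{M_\ell}(M_0 a_\ell)$ (needed for the Galois types over the closures to be well-formed triples); this follows from coherence, since $M_0 \subseteq \cl_{M_\ell}(M_0 a_\ell) \prec_{\mathbb{K}} M_\ell$ and $M_0 \prec_{\mathbb{K}} M_\ell$.
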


\begin{claim}
$\mathbb{K}_\sigma$ admits intersections.
\end{claim}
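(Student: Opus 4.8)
The plan is to prove the stronger statement that $\mathbb{K}_\sigma$ is closed under intersections of substructures inside a fixed model, which gives the claim at once. Fix $M \in \mathbb{K}_\sigma$ and $X \subseteq M$. The family $\{N : X \subseteq N \prec_{\mathbb{K}} M\}$ is nonempty since it contains $M$, and because $\mathcal{L}_\sigma$ has no constant symbols and its function symbols $\pi, F_c, Q$ are all total on $A$, the intersection of this family is again an $\mathcal{L}_\sigma$-substructure of $M$ containing $X$; this is exactly $\cl_M(X)$. It then remains only to check that $\cl_M(X)$ satisfies the three axioms of Definition \ref{ksig-def}, for then $\cl_M(X) \in \mathbb{K}_\sigma$ and hence $\cl_M(X) \prec_{\mathbb{K}} M$ as desired.

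The key observation, which I would isolate first, is that in any $\mathcal{L}_\sigma$-substructure $N$ of $M$ the $A$-sort is a union of complete $M$-fibers of $\pi$. Indeed, if $a \in A^N$ then, since $M$ satisfies axiom (1), the fiber $(\pi^M)^{-1}\{\pi^M(a)\}$ is precisely the single $G$-orbit $\{F_c^M(a) : c \in G\}$; as $N$ is closed under every $F_c$, this entire fiber lies in $A^N$. Combining this with axiom (1) for the members $N$ of our family yields the crucial consequence: if $i \in I^{\cl_M(X)}$ then $i \in I^N$ for every such $N$, so each $N$ contains a point of its fiber over $i$ and therefore the whole $M$-fiber over $i$; intersecting over all $N$, the full $M$-fiber over $i$ is contained in $A^{\cl_M(X)}$, so $(\pi^{\cl_M(X)})^{-1}\{i\}$ equals that fiber.

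With this in hand, the verification of the axioms is routine. Axiom (2) is inherited by restriction: $E'$ and $E$ restrict to equivalence relations on $A^{\cl_M(X)}$, and the biconditional $aE'b \leftrightarrow \pi(a)=\pi(b)$ passes down from $M$ since $\pi, E'$ are interpreted by restriction. For axiom (1), the previous paragraph shows each $(\pi^{\cl_M(X)})^{-1}\{i\}$ is a full $M$-fiber, hence a nonempty $G$-torsor on which the restricted $F_c$ still form a $1$-transitive $G$-action. For axiom (3), given $i \in I^{\cl_M(X)}$ and $j \in J^{\cl_M(X)}$, axiom (3) in $M$ provides a witness $a^*$ with $\pi(a^*)=i$ and $Q(a^*)=j$; this $a^*$ lies in the $M$-fiber over $i$, which we have just shown is contained in $A^{\cl_M(X)}$, so $a^*$ is the required witness in $\cl_M(X)$.

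I expect the only real content to be the full-fiber observation of the second paragraph: it is what forces the $A$-sort of the intersection to be large enough to witness axioms (1) and (3), the two axioms asserting existence of elements of $A$ and hence the ones genuinely at risk under intersection. Axiom (2), being purely a matter of restricting relations, is never in danger, so the main obstacle is simply to see that fibers cannot be partially destroyed by intersecting.
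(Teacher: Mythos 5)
There is a genuine gap, and it sits exactly at your ``crucial consequence.'' You claim that $i \in I^N$ forces $N$ to contain a point of its fiber over $i$, citing axiom (1) of Definition \ref{ksig-def}. But $1$-transitivity of the $G$-action on $\pi^{-1}\{i\}$ is vacuous when that fiber is empty, and $\mathbb{K}_\sigma$ genuinely contains structures with nonempty $I$-sort but empty $A$-sort: the paper explicitly allows empty $A$ and $J$ sorts and asserts $M_{0,d}$ (with $I = I_d$ and $A = J = \emptyset$) is a member of $\mathbb{K}_\sigma$ --- a membership the paper needs, since the types $p_d, q_d$ have domain $M_{0,d}$. (If you instead read ``$1$-transitive action'' as requiring a nonempty underlying set, then $M_{0,d} \notin \mathbb{K}_\sigma$ and, worse, the intersection claim itself becomes false; so the vacuous reading is forced.) Concretely, take $X \subseteq I^M$ nonempty: the structure $N$ with $I$-sort $X$ and empty $A$- and $J$-sorts is a strong substructure of $M$ containing $X$, so $A^{\cl_M(X)} = \emptyset$, while the $M$-fibers over points of $X$ are nonempty whenever $J^M \neq \emptyset$. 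Thus $(\pi^{\cl_M(X)})^{-1}\{i\}$ is \emph{not} the full $M$-fiber, your axiom (1) verification asserts a false statement (each fiber of the closure a ``nonempty $G$-torsor''; axiom (1) in fact holds here, but only vacuously), and your axiom (3) verification rests on the false lemma. The trap: nonemptiness of fibers comes from axiom (3), not axiom (1), and axiom (3) produces points only in the presence of a $J$-point.

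The repair is short, and your top-down strategy (verify the axioms directly for the intersection) is sound and genuinely different from the paper's proof, which works bottom-up: it writes explicit formulas for the $J$-part, the $I$-part, and then the $A$-part of the closure and checks these define the intersection. To fix your argument: for axiom (3) you are handed both $i \in I^{\cl_M(X)}$ and $j \in J^{\cl_M(X)}$, hence $i \in I^N$ and $j \in J^N$ for every $N$ in the family; axiom (3) applied \emph{inside each $N$} gives a point of $N$'s fiber over $i$, your (correct) orbit-closure observation upgrades this to the full $M$-fiber over $i$ lying in $A^N$, and intersecting over $N$ shows the witness $a^*$ provided by axiom (3) for $M$ lies in $\cl_M(X)$. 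For axiom (1) no nonemptiness is needed at all: transitivity passes to any $F_c$-closed subset, since for $a, b$ in a common fiber the witness $c \in G$ exists in $M$ and $F_c(a) = b$ already lies in the subset. One further caution: the paper's explicit $A$-part consists of those $a$ whose $\pi$-value \emph{and} $Q$-value are captured, and in its amalgamation construction $F_c$ preserves $Q$; this indicates the intended invariant of the $G$-action is the joint fiber $\pi^{-1}\{i\} \cap Q^{-1}\{j\}$ rather than $\pi^{-1}\{i\}$ (with axiom (1) read literally and $|J^M| \geq 2$, a nonempty $\pi$-fiber is a single orbit meeting every $Q$-value, which would make the paper's own closure fail to be $F_c$-closed). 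Under that reading, your identification of $G$-orbits with full $\pi$-fibers needs the same correction, with joint fibers in place of $\pi$-fibers throughout.
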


\begin{proof} We define the closure on each of the predicates.  Then $\cl_M(X)$ will be the substructure with the union of the $\cl^i_M(X)$ as the universe.
\begin{itemize}
	\item $\cl^1_M(X) = \left( X \cap J \right) \cup\{ j \in J^M : \exists a \in X \cap A^M . Q^M(a) = j\}$;
	\item $\cl^2_M(X) = \left( X \cap I \right) \cup \{i \in I^M : \exists a \in X \cap A^M . \pi^M(a) = i\}$;
	\item $\cl^3_M(X) = \{ a \in A^M : \exists (i, j) \in \cl^1_M(X) \times \cl^2_M(X) . Q^M(a) = j \text{ and } \pi^M(a) = i\}$.
\end{itemize}
It is routine to verify that $\cl_M$ satisfies Definition \ref{admits}. \end{proof}

\begin{remark} \label{shelahfix}
The AEC as constructed in \cite{shelah932} is not closed under intersections.  Shelah does not require that the entire group $G$ be included in every model.  This means that if there is an empty $E'$-equivalence class of $A$ that must be filled (due to it projecting into $I$ and $J$), then any proper subgroups $G' < G$ allows a choice of orbits to fill the equivalence class.  This choice is incompatible with closure under intersection.  However, an argument similar to Claim \ref{ap-claim} still shows it has amalgamation.
\end{remark}

\begin{proof}[Proof of Claim \ref{typeeq-claim}]
We begin by showing (1) and the forward direction of (2).  The type equality
comes from the fact that if $f$ is an $\mathcal{L}_\sigma^-$-isomorphism from
$H_{1, d}$ to $H_{2, d}$ that respects $\pi$, then $f^* := f  \cup \{ (i_1, i_2)
\}$ is a $\mathcal{L}_\sigma$-isomorphism from $M_{1, d}$ to $M_{2, d}$ that
fixes $M_{0, d}$ and sends $i_1$ to $i_2$.  Since each $g_d$ respects $\pi$, this witnesses $p_d = q_d$. The same
argument gives the forward direction of (2).

For the other direction, suppose $p_\D = q_\D$.  It is easy to compute that
$\cl_{M_{\ell, \D}}(M_{0, \D} i_\ell) = M_{\ell, \D}$.  So by Fact
\ref{typeequality-fact} we have an isomorphism $h: M_{1, \D} \cong_{M_{0, \D}}
M_{2, \D}$.  This restricts to an isomorphism from $H_{1, \D}$ to $H_{2, \D}$
that respects $\pi$ as in Lemma \ref{tame-like} and so $\#(\D,\mathcal{F})$
follows.\end{proof}

Much of the work on AECs takes place under the assumption of amalgamation.  Although not necessary for this proof, we also point out that $\mathbb{K}_\sigma$ has amalgamation.  This means the use of the construction \cite[Definition 4.5]{nonlocality} in \cite{shelah932} is unnecessary.

\begin{claim}\label{ap-claim}
$\mathbb{K}_\sigma$ has amalgamation.
\end{claim}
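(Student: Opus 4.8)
The plan is to verify amalgamation directly from the very concrete definition of $\mathbb{K}_\sigma$. Suppose we are given $M_0 \prec_\mathbb{K} M_1$ and $M_0 \prec_\mathbb{K} M_2$; without loss of generality we may assume $M_1 \cap M_2 = M_0$ (as underlying sets, sort by sort), since the amalgamation base can always be arranged this way by renaming. The natural candidate for an amalgam is to take the ``free'' union $N$ whose universe in each sort is $M_1 \cup M_2$ (identified along $M_0$), with the $\mathcal{L}_\sigma$-structure inherited from $M_1$ on $M_1$-elements and from $M_2$ on $M_2$-elements. The content of the proof is then to check two things: first, that this $N$ is well-defined and lies in $\mathbb{K}_\sigma$ (that is, it satisfies conditions (1)--(3) of Definition \ref{ksig-def}); and second, that the inclusions $M_1 \hookrightarrow N$ and $M_2 \hookrightarrow N$ are $\prec_\mathbb{K}$-embeddings, i.e. $\mathcal{L}_\sigma$-substructure inclusions.

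First I would check that the functions and relations are unambiguously defined on $N$. The only possible conflicts occur on elements of $M_0$, and there the two structures agree because $M_0$ is a common substructure; so $\pi$, $Q$, the $F_c$, and the relations $P, D_c, E', E, R$ are all well-defined. The potential obstacle is condition (3) of Definition \ref{ksig-def}: for every $i \in I^N$ and $j \in J^N$ there must be some $a \in A^N$ with $\pi(a) = i$ and $Q(a) = j$. Here $i$ and $j$ may come from different sides of the amalgam (say $i \in I^{M_1} \setminus M_0$ and $j \in J^{M_2} \setminus M_0$), and neither $M_1$ nor $M_2$ supplies a witness. The remedy is exactly the construction already used in the proof that $\mathbb{K}_\sigma$ admits intersections: these ``missing'' $A$-elements must be freshly added to $N$, one full $G$-orbit for each such pair $(i,j)$, with $\pi$ and $Q$ set to $i$ and $j$ and the $G$-action defined so that (1) holds as a $1$-transitive free action. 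This is the step I expect to be the main obstacle, since one must check that adding these orbits does not spoil the substructure relations $E', E, R, P, D_c$ for the original elements of $M_1$ and $M_2$.

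To handle that, I would exploit the fact (highlighted in Remark \ref{shelahfix}) that because the \emph{entire} group $G$ is coded into the language, there is no ambiguity about which orbits to add: each new $E'$-class $\pi^{-1}\{i\} \cap Q^{-1}\{j\}$ is a single full $G$-torsor, and $P$, $D_c$, $E$, and $R$ on a new class can be defined freely (e.g. $E = E'$ on the new class, $R$ relating nothing new) since these new elements are required to interact with nothing in $M_1$ or $M_2$ beyond their $\pi$- and $Q$-values. Consequently no relation among the \emph{old} elements is changed, so the inclusions $M_\ell \hookrightarrow N$ preserve and reflect every symbol of $\mathcal{L}_\sigma$ and are genuine $\mathcal{L}_\sigma$-substructure embeddings, which is precisely $\prec_\mathbb{K}$. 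Finally I would verify $N \in \mathbb{K}_\sigma$: condition (3) holds by construction, condition (2) holds because $E'$ and $E$ remain equivalence relations and $E'$ still coincides with equality of $\pi$-values, and condition (1) holds because each $\pi$-fiber in $N$ is a disjoint union of full free $G$-orbits, which is again a $1$-transitive action on each $E'$-class. This completes the amalgam, giving amalgamation for $\mathbb{K}_\sigma$.
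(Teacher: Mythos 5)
Your proposal is correct and takes essentially the same approach as the paper: the paper also forms the union of $M_1$ and $M_2$ over $M_0$ and, for every pair $(i,j) \in I^* \times J^*$ lacking a witness, supplies a fresh full $G$-torsor carrying only the structure forced by $M_1$ and $M_2$, the sole cosmetic difference being that the paper re-coordinatizes the entire $A$-sort uniformly as $I^* \times G \times J^*$ via chosen base points $x^\ell_{i,j}$ (so its embeddings are the maps $f_\ell$ rather than literal inclusions). The two constructions produce isomorphic amalgams, and both leave the same routine verifications (e.g.\ that the induced $E$, $E'$ remain equivalence relations and restrict correctly to $M_1$ and $M_2$) to the reader.
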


\begin{proof} Suppose that $M_0 \subset M_1, M_2 \in K$ and without loss of
generality $M_1 \cap M_2 = M_0$.  We will define the amalgam to essentially be the disjoint union of $M_1$ and $M_2$ over $M_0$ written in the standard way with $A$ equal to $I \times G \times J$.  Thus, we define the universe of $M^*$ as follows:
\begin{itemize}
	\item $J^* = J_1 \cup J_2$;
	\item $I^* = I_1 \cup I_2$; and
	\item $A^* =  I^* \times G \times J^*$.
\end{itemize}
For each $(i, j) \in I_\ell \times J_\ell$ and $\ell = 1, 2$, pick some $x^\ell_{i, j} \in A^{M_\ell}$ such that
\begin{itemize}
	\item $\pi^{M_\ell}(x^\ell_{i,j}) = i$;
	\item $Q^{M_\ell}(x^\ell_{i,j}) = j$; and
	\item if $(i, j) \in I_0 \times J_0$, then $x^1_{i, j} = x^2_{i, j}$.
\end{itemize}
The $x^\ell_{i,j}$ serves as the ``zero'' to define the action of $G$ on $A^*$.  We define $f_\ell:M_\ell \to M^*$ as the identity on $J_\ell, I_\ell,$ and $G$ and, given $y \in A^{M_\ell}$,
$$f_\ell(y) = (i, g, j) \iff F_g^{M_\ell}(x^\ell_{i, j}) = y; \pi^{M_\ell}(y) = i;\text{ and } Q^{M_\ell}(y) = j$$
We put the $\mathcal{L}_\sigma$-structure on $A^*$ only as required by $M_1$ and $M_2$.  For instance, $E^*$ holds of $(i, g, j)$ and $(i', g', j')$ iff they are images of $f_\ell$ and their preimages are $E^{M_\ell}$ related.  Then $M^* \in \mathbb{K}_\sigma$ and is the amalgam.
\end{proof}

We are now able to put the pieces together and generate several equivalences between global tameness principles for AECs and large cardinal axioms.

\begin{definition}[\cite{baldwinbook}, Chapter 11]
Let $\mathcal{K}$ be an AEC and $\kappa \leq \lambda$.
\begin{enumerate}
	\item $\mathcal{K}$ is $(<\kappa, \lambda)$-tame if for every $M \in \mathcal{K}_\lambda$ and $p \neq q \in \gS(M)$, there is a $M_0 \prec_{\mathcal{K}} M$ of size $< \kappa$ such that $p \rest M_0 \neq q \rest M_0$.
	\item $\mathcal{K}$ is $<\kappa$-tame if it is $(<\kappa, \mu)$-tame for all $\mu \geq \kappa$.
	\item $\mathcal{K}$ is eventually tame if it is $<\kappa$-tame for some $\kappa > \LS(\mathcal{K})$.
	\item $\mathcal{K}$ is $\kappa$-local if for every $M \in \mathcal{K}$, $p \neq q \in \gS(M)$, and resolution $\langle M_i \in \mathcal{K} \mid i < \kappa\rangle$ of $M$, there is $i_0 < \kappa$ such that $p \rest M_{i_0} \neq q \rest M_{i_0}$.
\end{enumerate}
\end{definition}

\begin{theorem} \label{almost-mainthm} Let $\sigma < \kappa$ be infinite
cardinals with $\sigma^\omega=\sigma$.
\begin{enumerate}

\item If $\kappa^\sigma=\kappa$ and every AEC $\mathbb{K}$ with
$\LS(\mathbb{K})= \sigma$ is $(<\kappa,\kappa)$-tame, then $\kappa$ is
$\sigma^+$-weakly compact.

\item If every AEC $\mathbb{K}$ with $\LS(\mathbb{K})=\sigma$ is $\kappa$-local,
then $\kappa$ is $\sigma^+$-measurable.

\item If every AEC $\mathbb{K}$ with $LS(\mathbb{K}) = \sigma$ is
$(<\kappa, \sigma^{(\lambda^{<\kappa})})$-tame, then  $\kappa$ is $(\sigma^+, \lambda)$-strongly compact.

\end{enumerate}
\end{theorem}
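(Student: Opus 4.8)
The plan is to run all three parts through a single pipeline: fix the directed order $\D$ appropriate to the target ($\D=(\kappa,\in)$ for (1) and (2), and $\D=(\mathcal{P}_\kappa(\lambda),\subsetneq)$ for (3)), build a family $\mathcal{F}$ of functions on $\D$ carrying \emph{exactly} the closure properties demanded by the relevant characterization of Section \ref{phase2}, form the AEC $\mathbb{K}_\sigma$ together with the types $p_\D$ and $q_\D$, use the tameness/locality hypothesis to force $p_\D=q_\D$, and finally read off the large cardinal via Claim \ref{typeeq-claim}(2), Lemma \ref{tame-like}, and the corollaries. In each case $\mathcal{F}$ will be chosen directed, $\sigma^+$-replete, and countably closed, with $\card{X}=\sigma$, so that Claims \ref{filter}, \ref{complete}, \ref{measure} and the hypotheses of Lemma \ref{tame-like} all apply; and $\D$ will be $\sigma^+$-directed (using regularity of $\kappa$), as required by Claim \ref{complete} for $\sigma^+$-completeness.

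For the construction of $\mathcal{F}$ I would start from the characteristic functions dictated by the target and close off under $\leq$-directedness, $\sigma^+$-repleteness, and countable closure, arranging all values to lie in $\sigma$. For (1) I fix a field $\mathcal{A}\subseteq\mathcal{P}(\kappa)$ with $\card{\mathcal{A}}=\kappa$ and start from its characteristic functions; since each closure step takes $\leq\sigma$ inputs, the hypothesis $\kappa^\sigma=\kappa$ keeps $\card{\mathcal{F}}=\kappa$, whence $M_{0,\D}$ and $M_{\ell,\D}$ have size $\kappa$. For (3) I start from the characteristic functions of \emph{all} subsets of $\mathcal{P}_\kappa(\lambda)$; the total number of candidate functions into $\sigma$ is $\sigma^{(\lambda^{<\kappa})}$, so the closure has size $\sigma^{(\lambda^{<\kappa})}$ and $M_{0,\D}$, $M_{\ell,\D}$ have size $\sigma^{(\lambda^{<\kappa})}$, matching the tameness parameter. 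For (2) there is no size constraint and $\mathcal{F}$ again carries characteristic functions for all subsets of $\kappa$.

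The heart of the argument is deducing $p_\D=q_\D$. By Claim \ref{typeeq-claim}(1) we already have $p_d=q_d$ for every $d\in\D$, i.e. $p_\D\rest M_{0,d}=q_\D\rest M_{0,d}$. For (2) this is immediate to exploit: $\seq{M_{0,d}\mid d<\kappa}$ is a continuous $\prec$-increasing resolution of $M_{0,\D}$ of length $\kappa$, so $\kappa$-locality, whose failure would force some $p_\D\rest M_{0,d}\neq q_\D\rest M_{0,d}$, yields $p_\D=q_\D$. For (1) and (3) I instead verify the tameness criterion directly. Given any $M_0\prec M_{0,\D}$ with $\card{M_0}<\kappa$, its projection to the second coordinate of $I_\D=\mathcal{F}\times\D$ is a set $S$ of fewer than $\kappa$ elements of $\D$, which by regularity of $\kappa$ is absorbed into a single $\ceil{d}$: for (1) take $d=\sup S+1<\kappa$, and for (3) take $d=(\bigcup S)\cup\{\xi\}\in\mathcal{P}_\kappa(\lambda)$ with $\xi\notin\bigcup S$. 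Then $M_0\prec M_{0,d}$ and hence $p_\D\rest M_0=p_d\rest M_0=q_d\rest M_0=q_\D\rest M_0$. Since every small restriction agrees, $(<\kappa,\kappa)$-tameness (resp. $(<\kappa,\sigma^{(\lambda^{<\kappa})})$-tameness) forces $p_\D=q_\D$.

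Finally, with $p_\D=q_\D$ in hand, Claim \ref{typeeq-claim}(2) produces an $\mathcal{L}_\sigma^-$-isomorphism $H_{1,\D}\to H_{2,\D}$ respecting $\pi$, so Lemma \ref{tame-like} (whose countable-closure hypothesis we arranged) gives $\#(\D,\mathcal{F})$. Feeding this into Corollary \ref{cor-wc} with Remark \ref{rem-almost-wc} gives $\sigma^+$-weak compactness for (1); Corollary \ref{cor-almost-meas} gives $\sigma^+$-measurability for (2); and Corollary \ref{cor-sc} with Remark \ref{rem-almost-sc} gives $(\sigma^+,\lambda)$-strong compactness for (3), with fineness coming for free since $\floor{d}\in U$ for all $d$ by Claim \ref{filter}. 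The step I expect to be most delicate is the tameness verification in (1) and (3): one must ensure that an \emph{arbitrary} small submodel, not merely the canonical pieces $M_{0,d}$, is captured inside a single $M_{0,d}$, which is exactly where regularity of $\kappa$ and the precise bookkeeping of the index sets enter. The matching size computation pinning the tameness parameter in (3) to $\sigma^{(\lambda^{<\kappa})}$ is the other point requiring care.
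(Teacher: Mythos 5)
Your proposal is correct and takes essentially the same route as the paper: the same choices of $\D$ and (up to presentation) of $\mathcal{F}$, the same pipeline through Claim \ref{typeeq-claim}, Lemma \ref{tame-like}, and the Section \ref{phase2} characterizations, and your absorption argument (bounding the $\D$-projection of a small submodel inside a single $\ceil{d}$) together with the resolution $\seq{M_{0,d} \mid d \in \D}$ for locality is exactly what the paper compresses into the one-line claim that tameness/locality yields $p_\D = q_\D$. The only cosmetic difference is in part (1): the paper gets $\mathcal{F}$ as the bounded-range functions lying in a size-$\kappa$, $\sigma$-closed elementary submodel $X \prec H_\theta$ containing $\mathcal{A}$ (so directedness, $\sigma^+$-repleteness, countable closure, and the characteristic functions come for free), whereas you build $\mathcal{F}$ by an explicit closure process under those same properties, both constructions using $\kappa^\sigma=\kappa$ to keep $\card{\mathcal{F}}=\kappa$ so that $(<\kappa,\kappa)$-tameness applies to $M_{0,\D}$.
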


So we have the following corollary.

\begin{corollary} \label{mainthm-cor} Let $\kappa$ be an infinite cardinal such that $\mu^\omega <
\kappa$ for all $\mu < \kappa$.
\begin{enumerate}
\item if $\kappa^{<\kappa}=\kappa$ and every AEC $\mathbb{K}$ with
$\LS(\mathbb{K})<\kappa$ is $(<\kappa,\kappa)$-tame, then $\kappa$ is almost
weakly compact.
\item if every AEC $\mathbb{K}$ with $\LS(\mathbb{K})<\kappa$ is $<\kappa$-tame, then $\kappa$ is almost strongly compact.
\end{enumerate}
\end{corollary}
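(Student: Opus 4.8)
The plan is to deduce both clauses directly from Theorem \ref{almost-mainthm}, running the appropriate part of that theorem once for each $\delta < \kappa$ with a well-chosen auxiliary cardinal $\sigma = \sigma(\delta)$ in the role of the L\"owenheim--Skolem number. All the mathematical content already sits in Theorem \ref{almost-mainthm}; what will remain is cardinal-arithmetic bookkeeping---checking that the chosen $\sigma$ meets the arithmetic and tameness hypotheses of the theorem, and that the completeness of the resulting filter is at least $\delta$. Since $\tau$-completeness implies $\delta$-completeness whenever $\delta \leq \tau$, each target property is monotone in $\delta$, so I only need to establish the conclusions for all infinite $\delta < \kappa$.

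First I would fix an infinite $\delta < \kappa$ and set $\sigma := \delta^\omega$. The standing assumption $\mu^\omega < \kappa$ for all $\mu < \kappa$ gives $\sigma < \kappa$; moreover $\sigma^\omega = (\delta^\omega)^\omega = \delta^\omega = \sigma$, and $\sigma = \delta^\omega \geq \delta$ is infinite. Thus $\sigma$ is an infinite cardinal below $\kappa$ with $\sigma^\omega = \sigma$, so Theorem \ref{almost-mainthm} applies at this $\sigma$. Because $\sigma < \kappa$, every AEC with $\LS(\mathbb{K}) = \sigma$ is in particular an AEC with $\LS(\mathbb{K}) < \kappa$, so the global tameness hypotheses of the corollary specialize to the corresponding hypotheses of the theorem at $\sigma$.

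For clause (1) I would use $\kappa^{<\kappa} = \kappa$, which yields $\kappa^\sigma = \kappa$ since $\sigma < \kappa$; then Theorem \ref{almost-mainthm}(1) gives that $\kappa$ is $\sigma^+$-weakly compact, and since $\delta \leq \sigma < \sigma^+$ the witnessing $\sigma^+$-complete filters are $\delta$-complete, so $\kappa$ is $\delta$-weakly compact. For clause (2) I would additionally fix $\lambda \geq \kappa$ and note $\sigma^{(\lambda^{<\kappa})} \geq 2^{\lambda^{<\kappa}} > \lambda^{<\kappa} \geq \lambda \geq \kappa$, so that $(<\kappa, \sigma^{(\lambda^{<\kappa})})$-tameness is an instance of $<\kappa$-tameness; hence every AEC with $\LS(\mathbb{K}) = \sigma$ meets the hypothesis of Theorem \ref{almost-mainthm}(3), which produces a $\sigma^+$-complete fine ultrafilter on $\mathcal{P}_\kappa\lambda$ witnessing that $\kappa$ is $(\sigma^+, \lambda)$-strongly compact. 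That ultrafilter is $\delta$-complete, so $\kappa$ is $(\delta, \lambda)$-strongly compact.

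Finally I would let $\delta < \kappa$ (and, in clause (2), $\lambda \geq \kappa$) range over all admissible values: this shows $\kappa$ is $\delta$-weakly compact for every $\delta < \kappa$, i.e.\ almost weakly compact, and $(\delta, \infty)$-strongly compact for every $\delta < \kappa$, i.e.\ almost strongly compact. I expect no real obstacle here, since the work is all in Theorem \ref{almost-mainthm}; the only point needing care is arranging $\sigma^\omega = \sigma$, $\sigma < \kappa$, and $\sigma \geq \delta$ simultaneously, which is precisely what the hypothesis $\mu^\omega < \kappa$ buys through the choice $\sigma = \delta^\omega$, together with the routine fact that $\tau$-completeness descends to $\delta$-completeness for $\delta \leq \tau$.
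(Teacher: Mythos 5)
Your proposal is correct and matches the paper's intent: the paper states this corollary as an immediate consequence of Theorem \ref{almost-mainthm} (``So we have the following corollary''), and the intended derivation is exactly your bookkeeping---for each $\delta < \kappa$, choose $\sigma = \delta^\omega < \kappa$ (available by the hypothesis $\mu^\omega < \kappa$), verify $\sigma^\omega = \sigma$ and the transferred tameness and arithmetic hypotheses, and use monotonicity of completeness to pass from $\sigma^+$ down to $\delta$. No gaps; this is essentially the same (and the only natural) route.
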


\begin{proof}[Proof of Theorem \ref{almost-mainthm}] We start with the proof of
part (1).  We wish to apply our AEC construction together with Remark
\ref{rem-almost-wc}.  Construct $\mathbb{K}_\sigma$ as in Definition \ref{ksig-def}; by assumption, this is $(<\kappa, \kappa)$-tame. Let $\mathcal{A}$ be a field of subsets of $\kappa$ with
$\card{\mathcal{A}}=\kappa$.  Let $\theta$ be a big regular cardinal and take
$X \prec H_\theta$ of size $\kappa$ with $\mathcal{A} \in X$.  By our cardinal
arithmetic assumption we can take $X$ to be closed under $\sigma$-sequences.
Let $\mathcal{F}$ be the collection of functions in $X$ with domain $\kappa$ and
range a bounded subset of $\kappa$ and $(\D,\tri) = (\kappa,\in)$.  A
straightforward argument using the fact that $X$ is closed under
$\sigma$-sequences shows that $\mathcal{F}$ is $\sigma^+$-replete.  It is also
clear that $\mathcal{F}$ has a characteristic function for each $A \in
\mathcal{A}$.  Build the Galois types $\{p_\alpha, q_\alpha \mid \alpha< \kappa\}$ corresponding to this system. By the $(<\kappa, \kappa)$-tameness of $\mathbb{K}_\sigma$, $p_\kappa = q_\kappa$. By Claim \ref{typeeq-claim} and Lemma
\ref{tame-like}, we have that $\#(\D,\mathcal{F})$ holds.  Note that Lemma
\ref{tame-like} requires that $\mathcal{F}$ is countably closed and this follows
from the fact that $\sigma^\omega=\sigma$.  So by Remark \ref{rem-almost-wc}
$\kappa$ is $\sigma^+$-weakly compact.

Part (2) is essentially Shelah's theorem from \cite{shelah932}, but with the
required corrections.  We let $\mathcal{F} = {}^\kappa\sigma$ and
$(\D,\tri)=(\kappa,\in)$.  By our locality assumption, Claim \ref{typeeq-claim}
and Lemma \ref{tame-like}, we have $\#(\D,\mathcal{F})$.  Hence by Corollary
\ref{cor-almost-meas}, we have that $\kappa$ is $\sigma^+$-measurable.

For part (3) we let $(\D,\tri)=(\mathcal{P}_\kappa(\lambda), \subset)$ and
$\mathcal{F} = {}^\D\sigma$.  By our tameness assumption, Claim
\ref{typeeq-claim} and Lemma \ref{tame-like}, we have $\#(\D,\mathcal{F})$.
Hence by Remark \ref{rem-almost-sc}, $\kappa$ is $(\sigma^+,\lambda)$-strongly
compact.  \end{proof}

By strengthening the hypotheses a little we can remove the `almost' from the
above theorem.  To do so we need a definition that generalizes \cite[Definition 2.10]{tamelc}. 

\begin{definition}
$(\mathbb{K}, \prec_{\mathbb{K}})$ is quasi-essentially below $\kappa$ if and
only if $\LS(\mathbb{K}) < \kappa$ or there is a theory $T$ in $L_{\kappa, \omega}$ such that $\mathbb{K} = \text{Mod }T$ and $\prec_{\mathbb{K}}$ is implied by $\prec_{L_{\kappa, \omega}}$.
\end{definition}

We have introduced quasi-essentially below instead of just essentially below
from \cite{tamelc}, because although the class of models in $\mathbb{K}_\sigma$
are axiomatizable in $L_{\sigma^+, \omega}$, the strong substructure relation is
even weaker than first-order elementary.

\begin{theorem}\label{wc-mainthm}  Let $\kappa$ be an infinite cardinal with
$\kappa^{<\kappa}=\kappa$ and for every $\mu<\kappa$, $\mu^\omega < \kappa$.  If
every AEC $\mathbb{K}$ which is quasi-essentially below $\kappa$ is
$(<\kappa,\kappa)$-tame, then $\kappa$ is weakly compact.  \end{theorem}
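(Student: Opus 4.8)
The plan is to reduce, via Corollary \ref{cor-wc}, to producing for each field $\mathcal{A} \subseteq \mathcal{P}(\kappa)$ with $\card{\mathcal{A}} = \kappa$ a family $\mathcal{F}$ of functions on $\D = \kappa$ that is directed, $\kappa$-replete, countably closed, and has a characteristic function for every $A \in \mathcal{A}$, and then to derive $\#(\kappa, \mathcal{F})$ from the tameness hypothesis exactly as in Theorem \ref{almost-mainthm}(1). The construction of $\mathcal{F}$ is the same, but run at full strength: using $\kappa^{<\kappa} = \kappa$ I take $X \prec H_\theta$ of size $\kappa$ with $\mathcal{A} \in X$ and $X$ closed under $<\kappa$-sequences, and let $\mathcal{F}$ be the set of functions in $X$ with domain $\kappa$ and range bounded below $\kappa$. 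Closure of $X$ under $<\kappa$-sequences upgrades the repleteness from $\sigma^+$ to full $\kappa$-repleteness, while the hypothesis $\mu^\omega < \kappa$ for $\mu < \kappa$ (together with the regularity of $\kappa$ forced by $\kappa^{<\kappa} = \kappa$) gives countable closure, since the common refinement of a $\leq$-increasing $\omega$-chain of functions with bounded range again has range of size $<\kappa$, hence bounded below $\kappa$, and so lies in $X$.

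The essential new feature is that $\kappa$-repleteness forces functions of arbitrarily large range below $\kappa$, so $\bigcup\{\ran f : f \in \mathcal{F}\}$ has size $\kappa$ and hence $\sigma = \kappa$. Consequently the associated AEC $\mathbb{K}_\sigma$ of Definition \ref{ksig-def} has $\LS(\mathbb{K}_\sigma) = \kappa$, which is \emph{not} below $\kappa$; this is precisely why the hypothesis is phrased in terms of AECs that are \emph{quasi-essentially} below $\kappa$. So the crux of the proof is to exhibit an AEC that carries the construction and is axiomatizable by a theory in $L_{\kappa, \omega}$.

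I expect this axiomatizability to be the main obstacle. The three clauses of Definition \ref{ksig-def} are almost all first-order, but full $1$-transitivity of the $G$-action on each fiber requires the sentence $\forall a\,\forall b\,(\pi(a) = \pi(b) \rightarrow \bigvee_{c \in G} F_c(a) = b)$, whose disjunction has size $\card{G} = \kappa$; this lands in $L_{\kappa^+, \omega}$, not $L_{\kappa, \omega}$, and connectedness of the orbit graph cannot be captured by disjunctions of size $< \kappa$. My resolution is to drop transitivity: let $\mathbb{K}_\kappa'$ be the class of $\mathcal{L}_\sigma$-structures in which $G$ acts \emph{freely} on the fibers (the homomorphism identities $F_{c + c'} = F_c \circ F_{c'}$ together with $F_c(a) \neq a$ for $c \neq \emptyset$), the action preserves both $\pi$ and $Q$, the relations $E', E$ are equivalence relations with $a E' b \leftrightarrow \pi(a) = \pi(b)$, and clause (3) holds. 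Each of these is a single first-order sentence, so $\mathbb{K}_\kappa'$ is the class of models of a set of $\kappa$-many $L_{\kappa, \omega}$-sentences; and since the $\mathcal{L}_\sigma$-substructure relation is automatically implied by $\prec_{L_{\kappa, \omega}}$, the class $\mathbb{K}_\kappa'$ is quasi-essentially below $\kappa$.

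It then remains to check that every step of Section \ref{phase3} survives the weakening. All of the structures $M_{0, d}, M_{\ell, d}, M_{\ell, \D}$ lie in $\mathbb{K}_\kappa'$, since they satisfy the stronger transitive axiom and have $Q$ constant. The proof that $\mathbb{K}_\kappa'$ admits intersections goes through verbatim: the closures $\cl^1, \cl^2, \cl^3$ are unchanged, and because the action now preserves $Q$ the set $\cl^3$ is closed under every $F_c$, so Definition \ref{admits} is satisfied without any appeal to transitivity. Hence Fact \ref{typeequality-fact} applies and the analogue of Claim \ref{typeeq-claim} holds for $\mathbb{K}_\kappa'$; in particular $\cl_{M_{\ell, \D}}(M_{0, \D} i_\ell) = M_{\ell, \D}$ is computed exactly as before. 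Applying the $(<\kappa, \kappa)$-tameness of $\mathbb{K}_\kappa'$ granted by the hypothesis yields $p_\D = q_\D$, hence an $\mathcal{L}_\sigma$-isomorphism $M_{1, \D} \cong_{M_{0, \D}} M_{2, \D}$; this isomorphism preserves all of $P, D_c, R, E, E'$ and fixes $I_\D$ pointwise, so it restricts to an $\mathcal{L}_\sigma^-$-isomorphism $H_{1, \D} \to H_{2, \D}$ respecting $\pi$. By Lemma \ref{tame-like}, whose countable-closure requirement we arranged above, we obtain $\#(\kappa, \mathcal{F})$, and Corollary \ref{cor-wc} then gives that $\kappa$ is weakly compact.
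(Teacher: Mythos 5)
Your set-theoretic side matches the paper exactly: the paper also takes $X \prec H_\theta$ of size $\kappa$ closed under $<\kappa$-sequences, lets $\mathcal{F}$ be the bounded-range functions in $X$, gets countable closure from $\mu^\omega < \kappa$ and $\kappa$-repleteness from the $<\kappa$-closure, and finishes via Lemma \ref{tame-like} and Corollary \ref{cor-wc}. You are also right to worry about axiomatizability: for $\sigma = \kappa$ the $1$-transitivity clause of Definition \ref{ksig-def} is a disjunction of size $\card{G} = \kappa$, and the paper is very terse here --- its proof just says ``code this into $\mathbb{K}_\kappa$,'' keeping the original class with transitivity. But your repair, replacing $1$-transitivity by a free action, is not a harmless weakening; it deletes precisely the axiom that makes the whole construction function, and the argument collapses at two points.

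First, $\mathbb{K}_\kappa'$ does \emph{not} admit intersections, so your claim that the proof ``goes through verbatim'' is false and Fact \ref{typeequality-fact} is unavailable. Take $M \in \mathbb{K}_\kappa'$ with $I = \{i\}$, $J = \{j\}$, $Q$ constant, and $A$ the disjoint union of two free $G$-orbits $O_1, O_2$ (legal once transitivity is dropped). The substructures with $A$-parts $O_1$ and $O_2$ both lie in $\mathbb{K}_\kappa'$ and contain $\{i, j\}$, but their intersection has empty $A$-part and so violates clause (3) of Definition \ref{ksig-def}; hence $\cl_M(\{i,j\})$, which by Definition \ref{admits} is the intersection of \emph{all} strong substructures, is not in $\mathbb{K}_\kappa'$. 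In the paper's class this cannot happen exactly because of transitivity: clause (3) forces one point of each needed fiber into every strong substructure, and transitivity plus closure under the $F_c$'s then drags in the whole fiber --- that is why the paper's $\cl^3$ (the full preimage) really is the intersection. Second, and fatally, in $\mathbb{K}_\kappa'$ the types $p_\D$ and $q_\D$ are equal for trivial reasons, with no tameness and no large cardinal: let $N$ have $I^N = I_\D$, $J^N = \{j^*\}$, $Q$ constant, and $A^N$ the disjoint union of $A^{M_{1,\D}}$ and $A^{M_{2,\D}}$, with $F_c, P, D_c, E, R$ defined componentwise and $E'$ as forced by $\pi$. Every $\pi$-fiber of $N$ is two free orbits, so $N \in \mathbb{K}_\kappa'$ (though $N \notin \mathbb{K}_\kappa$), and the two inclusions are $\prec_{\mathbb{K}_\kappa'}$-embeddings over $M_{0,\D}$ sending $i_1$ and $i_2$ to $j^*$. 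So $p_\D = q_\D$ holds unconditionally, yet the isomorphism $M_{1,\D} \cong_{M_{0,\D}} M_{2,\D}$ you want to extract need not exist --- if it always did, your argument would prove in ZFC that every $\kappa$ satisfying the stated cardinal arithmetic is weakly compact. Transitivity is the engine: it outlaws this disjoint-union amalgam and forces any common extension to identify the two systems fiber by fiber, which is what converts Galois-type equality into the coherent bijections giving $\#(\kappa, \mathcal{F})$. Any resolution of the $L_{\kappa,\omega}$-axiomatizability issue must therefore retain transitivity (at least over the models that can actually arise as amalgams); weakening the class as you propose cannot work.
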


\begin{proof} The proof follows the proof of Theorem \ref{almost-mainthm}.(1),
and we point out the differences.  The additional cardinal arithmetic implies
that $X \prec H_\theta$ can be taken to closed under $<\kappa$-sequences.  Then set $\mathcal{F}$ to be the collection of functions
in $X$ with domain $\kappa$ and range bounded in $\kappa$.  Note that
$\mathcal{F}$ is countably closed since $\kappa$ has uncountable cofinality and
${}^\kappa\sigma$ is countably closed provided that $\sigma^\omega=\sigma$.  As before, we code this into $\mathbb{K}_\kappa$ to conclude that $\#(\D, \mathcal{F})$ holds.  Moreover $\mathcal{F}$ is $\kappa$-replete by the
closure of $X$, hence by Corollary \ref{cor-wc}, $\kappa$ is weakly
compact. \end{proof}

\begin{theorem}\label{sc-mainthm} Let $\kappa$ be a cardinal with
$\cf(\kappa)>\omega$ and for all $\mu<\kappa$, $\mu^\omega<\kappa$.  If every
AEC $\mathbb{K}$ that is quasi-essentially below $\kappa$ is $<\kappa$-tame, then $\kappa$ is strongly compact.  \end{theorem}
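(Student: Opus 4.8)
The plan is to mimic the proof of Theorem~\ref{almost-mainthm}(3), upgrading the conclusion from $(\sigma^+,\lambda)$- to $(\kappa,\lambda)$-strong compactness by taking $\sigma=\kappa$ and exploiting that the \emph{global} hypothesis now applies to the class $\mathbb{K}_\kappa$ itself (since $\mathbb{K}_\kappa$ is quasi-essentially below $\kappa$), together with full $<\kappa$-tameness rather than just $(<\kappa,\kappa)$-tameness. Since strong compactness is $(\kappa,\infty)$-strong compactness, I fix an arbitrary $\lambda\geq\kappa$ and aim to produce a $\kappa$-complete fine ultrafilter on $\mathcal{P}_\kappa(\lambda)$; quantifying over $\lambda$ then yields the theorem. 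Concretely, I set $(\D,\tri)=(\mathcal{P}_\kappa(\lambda),\subsetneq)$ and let $\mathcal{F}$ be the collection of \emph{all} functions $f\colon\D\to\kappa$ whose range is bounded in $\kappa$.

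First I would check that this $(\D,\mathcal{F})$ meets the hypotheses needed to read a measure off $\#(\D,\mathcal{F})$. Directedness of $\mathcal{F}$ and the presence of a characteristic function for every subset of $\D$ are immediate. Crucially, $\mathcal{F}$ is $\kappa$-replete \emph{for free}: combining fewer than $\kappa$ fibers yields a function whose index set has size $<\kappa$, hence bounded range, so the witness stays in $\mathcal{F}$. This is why, unlike Theorem~\ref{wc-mainthm}, no elementary submodel and no $\kappa^{<\kappa}=\kappa$ assumption is required. The one place the cardinal arithmetic enters is countable closure, which Lemma~\ref{tame-like} needs through Claim~\ref{claim8}: given a $\leq$-increasing $\seq{f_n\mid n<\omega}$ with $\mu_n:=\card{\ran f_n}<\kappa$, the common refinement has at most $\prod_n\mu_n\leq\mu^\omega$ classes, where $\mu:=\sup_n\mu_n$; since $\cf(\kappa)>\omega$ we get $\mu<\kappa$, and since $\mu^\omega<\kappa$ the refinement still has $<\kappa$ classes, giving an upper bound $f\in\mathcal{F}$.

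Next I would run the coding of Section~\ref{phase3}. As $\mathbb{K}_\kappa$ (Definition~\ref{ksig-def}) is quasi-essentially below $\kappa$, the global tameness hypothesis applies to it; I build $M_{1,\D},M_{2,\D}$ and the types $p_\D=gtp(i_1/M_{0,\D};M_{1,\D})$ and $q_\D=gtp(i_2/M_{0,\D};M_{2,\D})$. By Claim~\ref{typeeq-claim}(1), $p_d=q_d$ for every $d\in\D$. Because $\mathcal{F}$ now contains all bounded functions, $M_{0,\D}$ may have size $>\kappa$, so it is precisely full $<\kappa$-tameness that I invoke: since $\D$ is $\kappa$-directed, any $M_0\prec_{\mathbb{K}}M_{0,\D}$ of size $<\kappa$ has its $\D$-coordinates bounded by some $d\in\D$, whence $M_0\subseteq M_{0,d}$, and coherence of Galois types gives $p_\D\rest M_0=p_d\rest M_0=q_d\rest M_0=q_\D\rest M_0$. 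As $M_0$ was an arbitrary small submodel, $<\kappa$-tameness forces $p_\D=q_\D$; then Claim~\ref{typeeq-claim}(2) produces the isomorphism of Lemma~\ref{tame-like} and hence $\#(\D,\mathcal{F})$. Finally Corollary~\ref{cor-sc} converts $\#(\mathcal{P}_\kappa(\lambda),\mathcal{F})$ into a $\kappa$-complete fine ultrafilter on $\mathcal{P}_\kappa(\lambda)$, i.e.\ $(\kappa,\lambda)$-strong compactness; as $\lambda$ was arbitrary, $\kappa$ is $(\kappa,\infty)$-strongly compact.

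The step that most deserves care, and the one I expect to be the main obstacle, is making the completeness full rather than merely $\cf(\kappa)$-complete. Claim~\ref{complete} delivers $\kappa$-completeness of $U$ only when $\D=\mathcal{P}_\kappa(\lambda)$ is $\kappa$-directed, which is exactly regularity of $\kappa$, and the submodel-capture step above silently uses the same fact. I therefore expect the real work to lie in confirming that the hypotheses together with the tameness assumption force $\kappa$ to be regular — a necessary condition in any case, since a $\kappa$-complete fine ultrafilter cannot live on $\mathcal{P}_\kappa(\lambda)$ for singular $\kappa$ — so that $\mathcal{P}_\kappa(\lambda)$ is genuinely $\kappa$-directed. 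A secondary point to verify is that $\mathbb{K}_\kappa$ really is quasi-essentially below $\kappa$ despite $\card{G}=\kappa$: one must either render the $1$-transitive action axiom in $L_{\kappa,\omega}$ or argue that the weak substructure relation $\prec_{\mathbb{K}}$ being implied by $\prec_{L_{\kappa,\omega}}$ suffices, which is exactly the subtlety the notion of quasi-essentially below was introduced to accommodate.
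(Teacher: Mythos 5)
Your proposal is, step for step, the paper's own proof (which is only four sentences long): the same $(\D,\tri)=(\mathcal{P}_\kappa(\lambda),\subset)$, the same $\mathcal{F}$ of all bounded-range functions, the same derivation of countable closure from $\cf(\kappa)>\omega$ together with $\mu^\omega<\kappa$, and the same chain Claim~\ref{typeeq-claim} $\to$ Lemma~\ref{tame-like} $\to$ Corollary~\ref{cor-sc}; the details you supply (trivial $\kappa$-repleteness, capturing a small submodel inside some $M_{0,d}$) are exactly what the paper leaves implicit.

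Since you flagged them, you should know that neither of your two closing concerns is resolved by the paper, and both are genuine. On regularity: Theorem~\ref{wc-mainthm} gets regularity for free from $\kappa^{<\kappa}=\kappa$, but the hypotheses of the present theorem do admit singular $\kappa$ (e.g.\ $\aleph_{\omega_1}$ under GCH), and for such $\kappa$ the argument breaks outright rather than merely needing care: $\mathcal{P}_\kappa(\lambda)$ is only $\cf(\kappa)$-directed, and moreover for singular $\kappa$ there are $f\in\mathcal{F}$ with $\eran f=\emptyset$ (let $f(d)$ record where $|d|$ sits relative to a fixed cofinal sequence below $\kappa$), so Claim~\ref{h}(4),(5) show that $p_\D\neq q_\D$ provably and tameness has no purchase. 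So regularity must either be added to the statement or somehow derived from the hypotheses; the paper does neither. On quasi-essential belowness: the paper records only that $\mathbb{K}_\sigma$ is axiomatizable in $L_{\sigma^+,\omega}$, which for the class $\mathbb{K}_\kappa$ actually used here is $L_{\kappa^+,\omega}$, not $L_{\kappa,\omega}$; the $1$-transitivity axiom is a disjunction of size $|G|=\kappa$, and for regular $\kappa$ this is not removable, since an $L_{\kappa,\omega}$-sentence mentions fewer than $\kappa$ of the symbols $F_c$, and relative to any subgroup $G'\leq G$ of size $<\kappa$ a fiber consisting of one $G$-orbit and a fiber consisting of two $G$-orbits are each disjoint unions of $\kappa$ many free $G'$-orbits, hence indistinguishable. (Dropping transitivity restores $L_{\kappa,\omega}$-axiomatizability but destroys admission of intersections, which is exactly the defect of Shelah's version discussed in Remark~\ref{shelahfix}.) In short, your caution is warranted: these are gaps in the paper's own treatment, not omissions peculiar to your write-up.
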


This theorem has some level by level information.  In particular
$(<\kappa, \sup_{\alpha < \kappa} \alpha^{(\lambda^{<\kappa})})$-tameness will
give that $\kappa$ is $\lambda$-strongly compact.

\begin{proof} The proof is similar to the other proofs above.  We take
$(\D,\tri)=(\mathcal{P}_\kappa(\lambda),\subset)$ and $\mathcal{F}$ to be the
set of functions with domain $\D$ and range bounded in $\kappa$.  A similar
argument to the one in the previous theorem shows that $\mathcal{F}$ is
countably complete.  Our tameness assumption gives $\#(\D,\mathcal{F})$ and
hence $\kappa$ is $\lambda$-strongly compact by Corollary \ref{cor-sc}.
\end{proof}

It is important to note that the converses of the main theorems (and
corollaries) from this section are true.  The bulk of the work is already done
in the first author's paper \cite{tamelc}.  In some cases the converses of the
stated results are stronger than theorems appearing in the literature.  With
some small adjustment the proofs in the literature already give these stronger
claims.  We make a brief list of the improvements required.

\begin{enumerate}
\item \L o\'{s}' Theorem for AECs \cite[Theorem 4.3]{tamelc} holds for AECs
with L\"{o}wenheim-Skolem number $\sigma$ and any $\sigma^+$-complete
ultrafilter.  This allows us to prove for example that every AEC $\mathbb{K}$
with $\LS(\mathbb{K})<\kappa$ is $<\kappa$-tame from $\kappa$ is almost strongly
compact.

\item \L o\'{s}' Theorem for AECs does not require an ultrafilter only that the
filter measures enough sets.  This allows us to prove every AEC $\mathbb{K}$
with $\LS(\mathbb{K})<\kappa$ is $(<\kappa,\kappa)$-tame from the assumption
that $\kappa$ is almost weakly compact.  In particular we can build everything
into a transitive model of set theory of size $\kappa$ and the weak compactness
assumption gives a filter measuring all subsets of $\kappa$ in the model.  This
is enough to complete the proof.

\item \L o\'{s}' Theorem for AECs applies to the class of AECs which are
quasi-essentially below $\kappa$.  This allows us to prove that every AEC
$\mathbb{K}$ which is quasi-essentially below $\kappa$ is $<\kappa$-tame from
$\kappa$ is strongly compact.  It could be that such AECs have no models of size
less than $\kappa$, but existing arguments are enough to give tameness over sets\footnote{Typically, Galois types are defined so that the domains are always models.  The same definition works for defining Galois types over arbitrary sets.  However, many model-theoretic arguments (\cite[Claim 3.3]{sh394} on local character of non-splitting from stability is an early example) only work for Galois types over models, explaining their prevalence.  The set-theoretic nature of the arguments from large cardinals, on the other hand, mean that they carry through with little change.}
rather than models.
\end{enumerate}

We collect a few remarks on our construction:
\begin{enumerate}
\item We do not know if the cardinal arithmetic assumptions are necessary in the
main theorems of this section.  For example, if we assume that $\kappa$ is
weakly compact and we force to add $\kappa^+$ many subsets to some $\sigma^+$
where $\sigma<\kappa$, then $\kappa$ remains $\sigma^+$-weakly compact in the
extension.  It follows that every AEC with L\"{o}wenheim-Skolem number $\sigma$
is $(<\kappa,\kappa)$-tame in the extension.  We do not know if $\kappa$
satisfies any stronger tameness properties in the extension.

\item Under our mild cardinal arithmetic assumptions, the global full tameness
and type shortness and compactness results from \cite{tamelc} follow from the
global tameness for 1-types, as this tameness is already enough to imply the
necessary large cardinals.
\end{enumerate}

We conclude this section with an application to category theory.  There has been
recent activity in exploring the connection between AECs and accessible
categories; see Lieberman \cite{lieberman}, Beke and Rosicky \cite{bekerosicky},
and Lieberman and Rosicky \cite{liebermanrosicky}.  In \cite[Theorem
5.2]{liebermanrosicky}, the authors apply a result of Makkai and Pare to derive a global version of Fact \ref{boneythm} from class many strong compacts.  Here we show that this application is in fact equivalent to
the whole result.  Note that in the global version we do not need any cardinal
arithmetic assumptions.

\begin{corollary} \label{ct-cor}
The following are equivalent:
\begin{enumerate}
	\item The powerful image of any accessible functor is accessible.
	\item Every AEC is eventually tame.
	\item There are class many almost strongly compact cardinals.
\end{enumerate}

Fix and infinite cardinal $\kappa$ with $\mu^\omega < \kappa$ for all $\mu < \kappa$.  The following are equivalent:
\begin{enumerate}
	\item The powerful image of a $<\kappa$-accessible functor is $<\kappa$-accessible.
	\item Every AEC with $LS(K) < \kappa$ is $<\kappa$-tame.
	\item $\kappa$ is almost strongly compact.
\end{enumerate}
\end{corollary}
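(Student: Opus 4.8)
The plan is to prove each of the two stated equivalences as a cycle of three implications, one edge for each pairing of the three frameworks (accessible categories, tameness of AECs, and large cardinals). The one genuinely new edge in each cycle is ``tameness $\Rightarrow$ large cardinal,'' which is exactly the content of the theorems already proved above; the other two edges are supplied by the cited work of Lieberman--Rosicky \cite{liebermanrosicky} and Brooke-Taylor--Rosicky \cite{brooketaylorrosicky}. Concretely, I would establish $(1)\Rightarrow(2)\Rightarrow(3)\Rightarrow(1)$ in each case.

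For the local equivalence (with $\kappa$ fixed so that $\mu^\omega<\kappa$ for all $\mu<\kappa$), the edge $(2)\Rightarrow(3)$ is Corollary \ref{mainthm-cor}(2) verbatim: this is where the cardinal-arithmetic hypothesis is spent and where the construction of $\mathbb{K}_\sigma$, Claim \ref{typeeq-claim} and Lemma \ref{tame-like} do the work. For $(3)\Rightarrow(1)$ I would invoke the level-by-level form of the Makkai--Par\'e theorem from Brooke-Taylor--Rosicky: an almost strongly compact $\kappa$ forces the powerful image of every $<\kappa$-accessible functor to be $<\kappa$-accessible. For $(1)\Rightarrow(2)$ I would apply the reduction of Lieberman--Rosicky: an AEC $\mathbb{K}$ with $\LS(\mathbb{K})<\kappa$ presents as a $<\kappa$-accessible category, its equality-of-Galois-types data is carried by an associated $<\kappa$-accessible functor, and $<\kappa$-accessibility of the corresponding powerful image is precisely the statement that distinct types already differ on a $<\kappa$-sized restriction, i.e. $(<\kappa,\mu)$-tameness for every $\mu\geq\kappa$. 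One could also close the cycle through the converse $(3)\Rightarrow(2)$ recorded in the list of improvements after Theorem \ref{sc-mainthm} (via \L o\'s' theorem for AECs with $\sigma^+$-complete ultrafilters), which furnishes the extra edge $(2)\Leftrightarrow(3)$ for robustness.

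For the global equivalence the edges $(3)\Rightarrow(1)$ and $(1)\Rightarrow(2)$ are the unparameterized versions of the same two citations, applied cardinal-by-cardinal and AEC-by-AEC. The one edge needing a new argument is $(2)\Rightarrow(3)$: from ``every AEC is eventually tame'' I must produce a \emph{proper class} of almost strongly compact cardinals, whereas eventual tameness gives each AEC only a single, possibly enormous, tameness threshold. Two observations make this work. First, tameness is monotone upward: if $\mathbb{K}$ is $<\kappa$-tame then it is $<\kappa'$-tame for every $\kappa'\geq\kappa$, since a submodel of size $<\kappa$ has size $<\kappa'$. Second, the proof of Theorem \ref{almost-mainthm}(3) only ever uses tameness of the single AEC $\mathbb{K}_\sigma$. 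So I would define a class function $g$ sending each $\sigma$ with $\sigma^\omega=\sigma$ to the least $\kappa$ for which $\mathbb{K}_\sigma$ is $<\kappa$-tame (well-defined by hypothesis $(2)$), and then consider the class of cardinals $\kappa$ that are closed under $g$ and satisfy $\mu^\omega<\kappa$ for all $\mu<\kappa$. Being the intersection of two club classes, this is a proper class. At any such $\kappa$ and any $\delta<\kappa$, set $\sigma=\delta^\omega<\kappa$, so that $\sigma^\omega=\sigma$ and $\delta\leq\sigma<\sigma^+$; closure under $g$ makes $\mathbb{K}_\sigma$ $<\kappa$-tame, so Theorem \ref{almost-mainthm}(3) yields that $\kappa$ is $(\sigma^+,\infty)$-strongly compact, hence $(\delta,\infty)$-strongly compact. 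As $\delta<\kappa$ was arbitrary, $\kappa$ is almost strongly compact, and the club class of such $\kappa$ is the desired proper class.

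The main obstacle is not the set theory---the large-cardinal edge is already in hand---but the precise alignment of parameters across the three languages. I must check that the accessibility framework of Lieberman--Rosicky and Brooke-Taylor--Rosicky matches the specific tameness indices $(<\kappa,\mu)$ used here: in particular that ``$<\kappa$-accessible powerful image'' corresponds to $(<\kappa,\mu)$-tameness \emph{for all} $\mu\geq\kappa$ rather than at a single $\mu$, and that the presentation of an AEC with $\LS(\mathbb{K})<\kappa$ as an accessible category respects this L\"owenheim--Skolem bound. The remaining bookkeeping---verifying that the two closure classes are genuinely proper and that $\delta\mapsto\delta^\omega$ stays below $\kappa$---is routine given the cardinal-arithmetic hypotheses.
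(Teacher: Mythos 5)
Your proposal is correct and follows essentially the same route as the paper: the same cycle of implications with the Lieberman--Rosicky and Brooke-Taylor--Rosicky citations supplying $(1)\Rightarrow(2)$ and $(3)\Rightarrow(1)$, and the same new argument for the global $(2)\Rightarrow(3)$, namely taking closure points of the map recording both $\sigma\mapsto\sigma^\omega$ and the tameness thresholds of the $\mathbb{K}_\sigma$, then applying the proof of Theorem \ref{almost-mainthm}(3) (which only uses the single AEC $\mathbb{K}_\sigma$) together with upward monotonicity of tameness. Your closure-under-$g$ class is just a repackaging of the paper's class of limit points of $\sigma \mapsto \sigma^\omega + \kappa_\sigma$, so there is no substantive difference.
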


In saying that every AEC is eventually tame, we allow AECs with no models of size $\kappa$ or larger to be trivially tame by saying they are $<\kappa$-tame.  For the category theoretic notions in this corollary, see \cite{makkaipare}.  In particular, given a functor $F:\mathcal{K} \to \mathcal{L}$, the powerful image of $F$ is the subcategory of $\mathcal{L}$ whose objects are $Fx$ for $x \in \mathcal{K}$ and whose arrows are any arrow from $\mathcal{L}$ between these objects.

\begin{proof} In the first set of equivalences, the first implies the second by \cite[Theorem 5.2]{liebermanrosicky}.  The third implies the first by Brooke-Taylor and Rosicky \cite[Corollary 3.5]{brooketaylorrosicky}, which is a modification of Makkai and Pare's original \cite[5.5.1]{makkaipare}.\\  
To see the second implies the third, for all $\sigma$, we know that there is some $\kappa_\sigma$ such that $\mathbb{K}_\sigma$ is $<\kappa_\sigma$-tame.  Let $\mathbf{S}$ be the set of all limit points of the map that takes $\sigma$ to $\sigma^\omega + \kappa_\sigma$.  Clearly, $\mathbf{S}$ is class sized.

We claim that each $\kappa \in \mathbf{S}$ is almost strongly compact.  First note that $\sigma^\omega < \kappa$ for all $\sigma < \kappa$.  Let $\sigma < \kappa \leq \lambda$.  Then $\mathbb{K}_{\sigma^\omega}$ is $<\kappa$-tame, so it's $\left(<\kappa, (\sigma^\omega)^{(\lambda^{<\kappa})}\right)$-tame.  The proof of Theorem \ref{almost-mainthm}.(3) only involves this AEC, so it implies that $\kappa$ is $\left((\sigma^\omega)^+, \lambda\right)$-strongly compact.  Of course, this means that it is $(\sigma, \lambda)$-strongly compact.  Since $\sigma$ and $\lambda$ were arbitrary, $\kappa$ is almost strongly compact, as desired.

The second set of equivalences is just the parameterized version of the first
one, and follows by the parameterized versions of the relevant results.  The
cardinal arithmetic is only needed for $(2)$ implies $(3)$. \end{proof}

\section{The consistency strength of
$(<\kappa,\kappa)$-tameness}\label{strength}

We have already remarked that we do not know if the cardinal arithmetic assumptions are necessary in the theorems of Section 4.  In this section, we show that in the absence of cardinal arithmetic assumptions, the degree of tameness we associate to weak compactness has the expected \emph{consistency strength}.

\begin{theorem} Let $\kappa$ be a regular cardinal greater than $\aleph_1$. If
every AEC $\mathbb{K}$ which is quasi-essentially below $\kappa$ is
$(<\kappa,\kappa)$-tame, then $\kappa$ is weakly compact in $L$. \end{theorem}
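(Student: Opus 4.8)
The plan is to carry out the construction behind Theorem \ref{wc-mainthm} inside $L$, exploiting that $L \models \mathrm{GCH}$ to supply the cardinal arithmetic that is no longer available in $V$, and then to move the combinatorial conclusion from $V$ back down to $L$ by an absoluteness argument. Since $\kappa$ is regular and uncountable in $V$, it remains so in $L$, and by $\mathrm{GCH}$ we have $\kappa^{<\kappa} = \kappa$ in $L$. By Corollary \ref{cor-wc} read inside $L$, it suffices to show that for every field $\mathcal{A} \in L$ of subsets of $\kappa$ with $|\mathcal{A}|^L = \kappa$ there is, in $L$, a directed and $\kappa$-replete $\mathcal{F}$ with characteristic functions for $\mathcal{A}$ such that $\#(\kappa, \mathcal{F})$ holds in $L$. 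So I would fix such an $\mathcal{A}$ and, working in $L$ just as in the proof of Theorem \ref{wc-mainthm}, take $(\D, \tri) = (\kappa, \in)$ and let $\mathcal{F}$ be the functions lying in a suitable $X \prec (H_\theta)^L$ of size $\kappa$ closed under $<\kappa$-sequences, with domain $\kappa$ and bounded range. Then $\mathcal{F} \in L$ is directed and $\kappa$-replete in $L$ and has characteristic functions for $\mathcal{A}$, and the associated $\mathbb{K}_\sigma$ is quasi-essentially below $\kappa$.

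Next I would feed $\mathbb{K}_\sigma$ into the tameness hypothesis, which is a statement about $V$. The structures $M_{0,\D}, M_{1,\D}, M_{2,\D}$ are defined outright from $\D, \mathcal{F} \in L$, so they are computed identically in $L$ and $V$, with $|M_{0,\D}| = \kappa$. By Claim \ref{typeeq-claim}(1), $p_d = q_d$ for every $d \in \D$; and any $\prec_{\mathbb{K}}$-submodel of $M_{0,\D}$ of size $<\kappa$ has its $\D$-coordinates bounded below $\kappa$ by regularity, hence is contained in some $M_{0,d}$, so $p_\D$ and $q_\D$ agree on all submodels of size $<\kappa$. As $\mathbb{K}_\sigma$ is quasi-essentially below $\kappa$, it is $(<\kappa,\kappa)$-tame in $V$, so $p_\D = q_\D$, and by Claim \ref{typeeq-claim}(2) together with Fact \ref{typeequality-fact} there is in $V$ an $\mathcal{L}_\sigma^-$-isomorphism $h : H_{1,\D} \to H_{2,\D}$ respecting $\pi$. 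I stress that one cannot now simply cite Lemma \ref{tame-like}: its proof requires $\mathcal{F}$ to be countably closed in $V$, and with no cardinal arithmetic in $V$ there is no reason for a size-$\kappa$ family in $L$ to be countably closed once new $\omega$-sequences are present.

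The key step is therefore to extract $\#$ from $h$ without any countable closure, in a form that is absolute. Claim \ref{h}(3)--(5), whose proofs use only $h$, produce for each $f \in \mathcal{F}$ a nonempty finite set $u_f \subseteq \ran^*(f)$ such that whenever $f^* \le f$ and $e$ witnesses this, $u_{f^*} = \{\, i : \exists^{odd} j \in u_f.\ e(j) = i \,\} \subseteq e[u_f] \subseteq e[\ran^*(f)]$. Fixing any $f^*$ and putting $u^* = u_{f^*}$, this says that the assertion
\[ \Phi(\mathcal{F}) \ :\equiv\ \exists f^* \in \mathcal{F}\ \exists u^* \in [\ran^*(f^*)]^{<\omega}\setminus\{\emptyset\}\ \forall f \in \mathcal{F}\big(\, f \ge f^* \to u^* \subseteq e^{f^*}_f[\ran^*(f)] \,\big) \]
holds in $V$, where $e^{f^*}_f$ is the unique witness to $f^* \le f$. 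Now $\Phi$ is equivalent in ZFC to $\#(\kappa,\mathcal{F})$: from a witness to $\Phi$ one chooses, for each $f \ge f^*$, one $e^{f^*}_f$-preimage in $\ran^*(f)$ of each element of $u^*$, obtaining $u_f$ with $e^{f^*}_f \rest u_f$ a bijection onto $u^*$; the converse is immediate. Crucially, every quantifier of $\Phi$ ranges over the $L$-objects $\mathcal{F}$ and $[\ran^*(f^*)]^{<\omega}$, and its matrix is computed absolutely from $\mathcal{F}, \D \in L$ (the cofinal range $\ran^*(f)$, the canonical map $e^{f^*}_f$, and the image are all absolute). Hence $\Phi$ has the same truth value in $L$ and in $V$; since it holds in $V$, it holds in $L$, and so $\#(\kappa, \mathcal{F})$ holds in $L$.

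As $\mathcal{A}$ was an arbitrary $L$-field of size $\kappa$, Corollary \ref{cor-wc} applied inside $L$ then gives that $\kappa$ is weakly compact in $L$, which in particular delivers the inaccessibility of $\kappa$ in $L$. The main obstacle is precisely this transfer: the naive route, pushing the $\kappa$-complete filter produced by $\#$ in $V$ down into $L$, fails because that filter need not belong to $L$. The reformulation $\Phi$ is what turns $\#$ into a statement whose truth value is computed in the ground model $L$ from the $L$-parameters $\mathcal{F}$ and $\D$ alone, thereby sidestepping both the unavailable countable closure in $V$ and the non-absoluteness of the isomorphism $h$.
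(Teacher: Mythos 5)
Your reduction to $\Phi$ is where the proof breaks. The statement $\Phi$ --- that each element of $u^*$ has an $e$-preimage in $\ran^*(f)$ for every $f \geq f^*$ --- is a theorem of ZFC for the families you use, with no tameness hypothesis at all: take $u^* = \{i\}$ for any $i \in \ran^*(f^*)$; since $\kappa$ is regular and $\ran(f)$ is bounded in $\kappa$, the cofinal set $(f^*)^{-1}\{i\}$ is a union of fewer than $\kappa$ fibers of $f$, so some fiber $f^{-1}\{j\}$ with $e(j)=i$ is cofinal, i.e.\ $i \in e[\ran^*(f)]$. Consequently $\Phi$ holds in $L$ for \emph{every} regular uncountable cardinal of $L$, and your route from ``$\Phi$ holds in $L$'' to ``$\kappa$ is weakly compact in $L$'' uses nothing about $V$; it would prove outright that $\aleph_2^L$ is weakly compact in $L$. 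The error is the claimed ZFC-equivalence of $\Phi$ with $\#(\kappa,\mathcal{F})$. Choosing preimages arbitrarily does give, for each single $f \geq f^*$, a set $u_f$ with $e \rest u_f$ a bijection onto $u^*$, but the filter construction of Section \ref{phase2} needs these choices to cohere across different $f$'s: closure of $U$ under intersection in Claim \ref{filter} requires that whenever $f^* \leq f_1 \leq f$ and $e_1$ witnesses $f_1 \leq f$, the distinguished points satisfy $e_1(i_f) = i_{f_1}$. This is the coherence made explicit in Remark \ref{partfilter-rem}; arbitrary per-$f$ choices need not satisfy it, and in the trivial witness above no choices can (otherwise $\aleph_2^L$ would carry the required ultrafilters). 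The coherence is exactly what Lemma \ref{tame-like} extracts from $h$: the proof of Claim \ref{h}(3) gives $u_{f_1} = \{ i \mid \exists^{odd} j \in u_f .\ e_1(j) = i\}$ for every pair $f_1 \leq f$, and Claim \ref{claim8} --- the one place countable closure of $\mathcal{F}$ \emph{in $V$} is used --- upgrades these to bijections, so that $e_1 \rest u_f$ is a bijection onto $u_{f_1}$. That coherent, $\mathcal{F}$-indexed system is the actual content of $\#$; it is an existence statement that is upward absolute from $L$ to $V$ but not downward, so it cannot be recovered in $L$ by a formula whose quantifiers range over $L$-parameters one $f$ at a time.

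The paper solves, rather than sidesteps, both obstacles you identified. For countable closure in $V$, it splits on $0^\#$: if $0^\#$ exists, every uncountable cardinal is weakly compact in $L$ and we are done; if not, it takes $\mathcal{F}$ to be the bounded-range functions lying in some $L_\beta$ with $\beta < (\kappa^+)^L$ and $L \models [L_\beta \cap \mathcal{P}(\kappa)]^{<\kappa} \subseteq L_\beta$, and invokes Jensen's Covering Lemma: any countable (in $V$) subset of $\mathcal{F}$ is covered by a set $Y \in L$ of size $\aleph_1 < \kappa$, which lies in $L_\beta$ and hence has an upper bound there. So Lemma \ref{tame-like} legitimately applies in $V$ and yields the genuine coherent $\#(\kappa,\mathcal{F})$, hence a filter $U$ in $V$ measuring all subsets of $\kappa$ in $L_\beta$ and $\kappa$-complete for sequences in $L_\beta$. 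For the descent to $L$, instead of absoluteness one forms the ultrapower $j : L_\beta \to L_\gamma \simeq \Ult(L_\beta, U)$, with critical point $\kappa$, and notes that for an enumeration $\langle A_\alpha \mid \alpha < \kappa \rangle \in L_\beta$ of the field $\mathcal{A}$, the sequence $\langle j(A_\alpha) \mid \alpha < \kappa\rangle = j(\langle A_\alpha \mid \alpha<\kappa\rangle) \rest \kappa$ is an element of $L_\gamma \subseteq L$; hence $\bar{U} = \{A_\alpha \mid \kappa \in j(A_\alpha)\}$ is a member of $L$ and is the desired $\kappa$-complete $\mathcal{A}$-ultrafilter there. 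This ultrapower-restriction step is the missing idea in your proposal: weak compactness in $L$ requires exhibiting an ultrafilter that is an element of $L$, and that object must be manufactured inside $L$ from the $V$-filter, not transferred as the truth value of a formula.
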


\begin{proof} We may assume that $0^\#$ does not exist, since otherwise every uncountable cardinal is weakly compact in $L$ (see \cite[Theorems 9.17.(b) and 9.14.(b)]{kanamori}).  Let $\mathcal{A}$ in $L$ be
a collection of $\kappa$ many subsets of $\kappa$ which is closed under
complements and intersections of size less than $\kappa$.  Choose an ordinal
$\beta<(\kappa^+)^L$ with $\mathcal{A} \in L_\beta$ and such that $L$ models
$[L_\beta \cap \mathcal{P}(\kappa)]^{<\kappa} \subseteq L_\beta$.  Let
$\mathcal{F}$ be the collection of functions in $L_\beta$ from $\kappa$ to
$\kappa$ whose ranges are bounded in $\kappa$.

We claim that $\mathcal{F}$ is countably closed in $V$ under the ordering on
functions defined in Section \ref{phase2}.  Suppose that $X$ is a countable
subset of $\mathcal{F}$.  By the covering lemma and our assumption that $0^\#$
doesn't exist, there is a set $Y \in L$ of size $\aleph_1$ with $X \subseteq Y$.
By the choice of $L_\beta$, $Y \in L_\beta$ and hence it has an upperbound in
$L_\beta$.

By our tameness assumption, Claim \ref{typeeq-claim} and Lemma \ref{tame-like},
we have $\#(\kappa,\mathcal{F})$ from which we can derive a filter $U$ on
$\kappa$.  From the way we chose $\mathcal{F}$, $U$ measures all subsets of
$\kappa$ in $L_\beta$ and is $\kappa$-complete with respect to sequences in
$L_\beta$.  We are now ready to give a standard argument that $U$ restricted to
$\mathcal{A}$ is in $L$.

Let $j:L_\beta \to L_\gamma \simeq \Ult(L_\beta,U)$ be the elementary embedding
derived from the ultrapower by $U$.  Standard arguments show that the critical
point of $j$ is $\kappa$.  Let $\langle A_\alpha \mid \alpha < \kappa \rangle$
be an enumeration of $\mathcal{A}$ in $L_\beta$.  The sequence $\langle
j(A_\alpha) \mid \alpha < \kappa \rangle$ is in $L$, since it is just $j(\langle
A_\alpha \mid \alpha < \kappa \rangle) \upharpoonright \kappa$.  So the set
$\bar{U} = \{ A_\alpha \mid \kappa \in j(A_\alpha) \}$ is in $L$.  It is easy to
see that $\bar{U} \subseteq U$ and hence is the $\kappa$-complete
$\mathcal{A}$-ultrafilter that we require. \end{proof}

We expect that a similar result can be proved for locality and almost measurability with an appropriate inner model in place of $L$.  Of course, extending this result to the almost strongly compact case would require major advances in inner model theory.
\bibliographystyle{amsplain}
\bibliography{references}

\providecommand{\bysame}{\leavevmode\hbox to3em{\hrulefill}\thinspace}
\providecommand{\MR}{\relax\ifhmode\unskip\space\fi MR }
\providecommand{\MRhref}[2]{%
  \href{http://www.ams.org/mathscinet-getitem?mr=#1}{#2}
}
\providecommand{\href}[2]{#2}
\begin{thebibliography}{10}

\bibitem{BM}
Joan Bagaria and Menachem Magidor, \emph{Group radicals and strongly compact
  cardinals}, Trans. Amer. Math. Soc. \textbf{366} (2014), no.~4, 1857--1877.
  \MR{3152715}

\bibitem{baldwinbook}
John~T. Baldwin, \emph{Categoricity}, University Lecture Series, vol.~50,
  American Mathematical Society, Providence, RI, 2009.

\bibitem{nonlocality}
John~T Baldwin and Saharon Shelah, \emph{Examples of non-locality}, The Journal
  of Symbolic Logic \textbf{73} (2008), no.~03, 765--782.

\bibitem{bekerosicky}
T.~Beke and J.~Rosický, \emph{Abstract elementary classes and accessible
  categories}, Annals of Pure and Applied Logic \textbf{163} (2012), no.~12,
  2008 -- 2017.

\bibitem{tamelc}
Will Boney, \emph{Tameness from large cardinal axioms}, J. Symb. Log.
  \textbf{79} (2014), no.~4, 1092--1119.

\bibitem{brooketaylorrosicky}
Andrew Brooke-Taylor and Jir\'i Rosick\'y, \emph{Accessible images revisited},
  preprint.

\bibitem{tamenessthree}
Rami Grossberg and Monica VanDieren, \emph{Categoricity from one successor
  cardinal in tame abstract elementary classes}, J. Math. Log. \textbf{6}
  (2006), no.~2, 181--201. \MR{2317426 (2008k:03070)}

\bibitem{kanamori}
Akihiro Kanamori, \emph{The higher infinite}, second ed., Springer Monographs
  in Mathematics, Springer-Verlag, Berlin, 2003, Large cardinals in set theory
  from their beginnings.

\bibitem{liebermanrosicky}
Michael Lieberman and Jir\'i Rosick\'y, \emph{Classification theory for
  accessible categories}, preprint.

\bibitem{lieberman}
Michael~J. Lieberman, \emph{Category-theoretic aspects of abstract elementary
  classes}, Annals of Pure and Applied Logic \textbf{162} (2011), no.~11, 903
  -- 915.

\bibitem{makkaipare}
Michael Makkai and Robert Par{\'e}, \emph{Accessible categories: the
  foundations of categorical model theory}, Contemporary Mathematics, vol. 104,
  American Mathematical Society, Providence, RI, 1989.

\bibitem{morleycat}
Michael Morley, \emph{Categoricity in power}, Trans. Amer. Math. Soc
  \textbf{114} (1965), no.~2, 514--538.

\bibitem{shelah932}
Saharon Shelah, \emph{Maximal failures of sequence locality in aec}, Preprint.

\bibitem{sh394}
\bysame, \emph{Categoricity for abstract classes with amalgamation}, Ann. Pure
  Appl. Logic \textbf{98} (1999), no.~1-3, 261--294. \MR{1696853 (2000m:03087)}

\end{thebibliography}
\end{document}